\newtheorem{theorem}{Theorem}[section]
\newtheorem{proposition}[theorem]{Proposition}
\newtheorem{corollary}[theorem]{Corollary}
\newtheorem{lemma}[theorem]{Lemma}
\newtheorem{remark}[theorem]{Remark}
\newcommand{\dis}{\displaystyle}
\newcommand{\R}{\mathbb{R}}
\newcommand{\C}{\mathbb{C}}
\newcommand{\Z}{\mathbb{Z}}
\newcommand{\N}{\mathbb{N}}
\newcommand{\Real}{\operatorname{Re}}
\newcommand{\W}{\mathcal{W}}
\newcommand{\Wvec}{\overline{\W}}
\newcommand{\V}{\mathcal{V}}
\newcommand{\parameter}[1]{{[#1]}}
\newcommand{\roundparameter}[1]{{(#1)}}
\newcommand{\kk}{{\parameter{k}}}
\newcommand{\zero}{{\parameter{0}}}
\newcommand{\one}{{\parameter{1}}}
\newcommand{\two}{{\parameter{2}}}
\newcommand{\PP}{\mathcal{P}}
\newcommand{\BesselK}[2][x]{\mathrm{K}_{#2}\left(#1\right)}
\newcommand{\aaa}{\mathrm{a}}
\newcommand{\bbb}{\mathrm{b}}
\newcommand{\dd}{\mathrm{d}}
\newcommand{\dx}[1][x]{\mathrm{d}#1}
\newcommand{\DiffOp}[1][x]{\frac{\dd}{\dx[#1]}}
\newcommand{\DiffOpHigherOrder}[2][x]{\frac{\dd^{#2}}{\dx[#1]^{#2}}}
\newcommand{\DiffOpFunction}[2][x]{\frac{\dd #2}{\dx[#1]}}
\newcommand{\DiffOpHigherOrderFunction}[3][x]{\frac{\dd^{#2}#3}{\dx[#1]^{#2}}}
\newcommand{\pochhammer}[2][n]{\left(#2\right)_{#1}}
\newcommand{\Hypergeometric}[5][x]
{{}_{#2} F_{#3} \left(\begin{matrix} #4 \vspace*{0,1 cm}\\ #5 \end{matrix}
	\, ; \, #1\right)}
\newcommand{\twoFtwo}[3][x]{\Hypergeometric[#1]{2}{2}{#2}{#3}}
\newcommand{\U}{\textbf{U}}
\newcommand{\UCal}{\mathcal{U}}
\newcommand{\KummerU}[3][x]{\textbf{U}\left(#2,#3;#1\right)}
\newcommand\matrixtwobytwo[4]{\begin{bmatrix} \dis #1 & \dis #2 \vspace*{0,1 cm} \\ 
		\dis #3 & \dis #4 \end{bmatrix}}
\newcommand\twovector[2]
\newcommand{\UCalvectorK}[2][x]
{\twovector{\UCal_0^{#2}\left(#1\right)}{\UCal_1^{#2}\left(#1\right)}}
\newcommand{\UCalvector}[1][x]
{\twovector{\UCal_0\left(#1\right)}{\UCal_1\left(#1\right)}}
\newcommand{\floor}[1]{\left\lfloor #1 \right\rfloor}
\newcommand{\UCalvec}{\overline{\UCal}}
\newcommand{\polyseq}[1][P_n(x)]{ \{#1 \}_{n\in\N}}  
\newcommand{\KK}{\huge \textbf{K} \normalsize}
\newcommand{\contfrac}[3][j=1]{\mathop{\KK}\limits_{#1}^{\infty}\left(\frac{#2}{#3}\right)}
\numberwithin{equation}{section}
\numberwithin{theorem}{section}
\newcommand{\n}{\vec{n}}
\newcommand{\indexedvector}[2][n]{(#1_0,\cdots,#1_{#2-1})}
\newcommand{\e}{\mathrm{e}}
\newcommand{\diag}{\mathrm{diag}}
\newcommand{\Hn}{\mathrm{H}_n}
\newcommand{\moment}[1][k]{m_{#1}}
\begin{document}
	
\title{Multiple orthogonal polynomials associated with confluent hypergeometric functions}
\author{H\'elder Lima\footnote{Address: School of Mathematics, Statistics and Actuarial Sciences, University of Kent, Sibson Building, Parkwood Road, Canterbury, CT2 7FS,  UK \newline 
Email addresses: (H\'elder Lima) has27@kent.ac.uk and (Ana Loureiro) a.loureiro@kent.ac.uk }\  \ and Ana Loureiro$^{*}$}

\date{\today}
\maketitle
	
\noindent\uppercase{Abstract.}
We introduce and analyse a new family of multiple orthogonal polynomials of hypergeometric type with respect to two measures supported on the positive real line which can be described in terms of confluent hypergeometric functions of the second kind. These two measures form a Nikishin system. 
Our focus is on the multiple orthogonal polynomials for indices on the step line. 
The sequences of the derivatives of both type I and type II polynomials with respect to these indices are again multiple orthogonal and  they correspond to the original sequences with shifted parameters.
For the type I polynomials, we provide a Rodrigues formula. We characterise the type II polynomials via their explicit expression as a terminating generalised hypergeometric series, as solutions to a third-order differential equation and via their recurrence relation. The latter involves recurrence coefficients which are unbounded and asymptotically periodic. Based on this information we deduce the asymptotic behaviour of the largest zeros of the type II polynomials.
We also discuss limiting relations between these polynomials and the multiple orthogonal polynomials with respect to the modified Bessel weights. Particular choices on the parameters for the type II polynomials under discussion correspond to the cubic components of the already known threefold symmetric Hahn-classical multiple orthogonal polynomials on star-like sets.

\noindent\textbf{Keywords:} 
\textit{Multiple orthogonal polynomials, confluent hypergeometric function, Nikishin system, Rodrigues formula, generalised hypergeometric series, differential equation, recurrence relation, Hahn-classical}

\noindent \textbf{Mathematics Subject Classification 2000:} Primary: 33C45, 42C05, Secondary: 33C10, 33C15, 33C20

\section{Introduction and motivation}

The main aim of this paper is to investigate the {\it multiple orthogonal polynomials} with respect to two absolutely continuous measures supported on the positive real line and admitting an integral representation via weight functions $\W(x;a,b,c)$ and $\W(x;a,b,c+1)$ where 
\begin{align}
\label{modified Tricomi weight definition}
\W(x;a,b,c)=\frac{\Gamma(c+b+1)}{\Gamma(a+1)\Gamma(b+1)}\,\e^{-x}x^a\,\KummerU{c}{a-b+1}, 
\end{align}
with $a$, $b$, $c$ such that $a>-1$, $b>-1$ and $c>\max\{0,a-b\}$. The weight functions involve the {\it confluent hypergeometric function of the second kind} $\dis\KummerU[x]{\alpha}{\beta}$, also known as the {\it Tricomi function}, which is a solution of the second-order differential equation 
(see \cite[Eq.~13.2.1]{DLMF}) 
\begin{equation}
\label{Kummer equation}
x\,\DiffOpHigherOrderFunction{2}{y}+(\beta-x)\DiffOpFunction{y}-\alpha y=0,
\end{equation}
and, provided that $\Real(\alpha)>0$ and $\dis|\arg(x)|<\frac{\pi}{2}$, it admits the integral representation (see \cite[Eq.~13.4.4]{DLMF})
\begin{align*}
\KummerU{\alpha}{\beta}=\frac{1}{\Gamma(\alpha)}
\int_{0}^{\infty}t^{\alpha-1}(t+1)^{\beta-\alpha-1}e^{-tx}\dx[t].
\end{align*}
The conditions on the parameters $a>-1$, $b>-1$ and $c>\max\{0,a-b\}$ guarantee the convergence of  the integral of the modified Tricomi weight on the positive real line, precisely  (see \cite[Eq.~13.10.7]{DLMF}):
\begin{equation}
\int_{0}^{\infty}\e^{-x}x^a\,\KummerU{c}{a+1-b}\dx
=\frac{\Gamma(a+1)\Gamma(b+1)}{\Gamma(c+b+1)}.
\end{equation}
Therefore the weight function $\dis\W(x;a,b,c)$ is a probability density function whose moments are  given by
\begin{align}
\label{moments}
\moment(a,b,c)
=\int_{0}^{\infty}x^k\ \W(x;a,b,c)\, \dx
=\frac{\pochhammer[k]{a+1}\pochhammer[k]{b+1}}{\pochhammer[k]{c+b+1}},\quad k\in \N,
\end{align}
where, as usual, $\pochhammer[k]{z}$ denotes the Pochhammer symbol defined by 
\[
	\dis\pochhammer[0]{z}=1\quad \text{and} \quad \dis\pochhammer[k]{z}:=z(z+1)\cdots(z+k-1) ,\quad k\in\N\backslash\{0\}.
\] 
Here and throughout the text, $\N=\Z_0^+=\{0,1,2,\cdots\}$. 
When referring to $\polyseq$ as a polynomial sequence it is assumed that $P_n$ is a polynomial of a single variable with degree exactly $n$ and we consistently deal with monic polynomials, unless stated otherwise.

Research on multiple orthogonal polynomials has received a focus of attention in the past decennia, partly motivated by their applicability to different areas of mathematics and mathematical physics. In particular, they have been utilised in the description of rational solutions to Painlev\'e equations \cite{ClarkManf} as well as in random matrix theory. For instance, the investigation of singular values of products of Ginibre matrices uses multiple orthogonal polynomials associated with weight functions expressed in terms of Meijer G-functions \cite{KuijlaarsStiv14}. 
If only two measures are involved, then those Meijer G-functions are hypergeometric or confluent hypergeometric functions. 
This research offers a thorough investigation of a collection of multiple orthogonal polynomials that fits within this category.

{\it Multiple orthogonal polynomials} are a generalisation of (standard) orthogonal polynomials. 
Their orthogonality measures are spread across a vector of $r\in\Z^+$ measures and they are polynomials on a single variable depending on a multi-index $\n=(n_0,\cdots,n_{r-1})\in\N^r$ of length $|\n|=n_0+\cdots+n_{r-1}$. 
There are two types of multiple orthogonal polynomials with respect to a system of $r$ measures $(\mu_0,\cdots,\mu_{r-1})$. 

The \textit{type I multiple orthogonal polynomials} for $\n=(n_0,\cdots,n_{r-1})\in\N^r$ are given by a vector $\left(A_{\n}^{(0)},\cdots,A_{\n}^{(r-1)}\right)$ of $r$ polynomials, with $\deg A_{\n}^{(j)}\leq n_j-1$, for each $0\leq j\leq r-1$, satisfying the orthogonality and normalisation conditions
\begin{align}
\label{orthogonality conditions type I}
\sum_{j=0}^{r-1}\int x^kA_{\n}^{(j)}(x)\dd\mu_j(x)=
\begin{cases}
0, &\text{ if } 0\leq k\leq|\n|-2,\\
1, &\text{ if } k=|\n|-1.
\end{cases}
\end{align}

If the measures $\mu_j(x)$ are absolutely continuous with respect to a common positive measure $\mu$, that is, if we can write $\dis\dd\mu_j(x)=w_j(x)\dd\mu(x)$, for each $0\leq j\leq r-1$ and for some weight functions $w_j(x)$, then the type I function is
\begin{align}
\label{type I function definition}
Q_{\n}(x)=\sum_{j=0}^{r-1}A_{\n}^{(j)}(x)\ w_j(x)
\end{align}
and the conditions in \eqref{orthogonality conditions type I} become
\begin{align}
\label{orthogonality conditions type I function}
\int x^kQ_{\n}(x)\dd\mu(x)
=\begin{cases}
0, &\text{ if } 0\leq k\leq|\n|-2,\\
1, &\text{ if } k=|\n|-1.
\end{cases}
\end{align}
In the case of $r=2$ measures, we use the notation $A_{\n}$ for $A_{\n}^{(0)}$ and $B_{\n}$ for $A_{\n}^{(1)}$.

The \textit{type II multiple orthogonal polynomial} for $\n=(n_0,\cdots,n_{r-1})\in\N^r$ consists of monic polynomials $P_{\n}$ of degree $|\n|$ which satisfies, for each $0\leq j\leq r-1$, the orthogonality conditions
\begin{align}
\label{orthogonality conditions type II}
\int x^kP_{\n}(x)\dd\mu_j(x)=0,
\;\;\;0\leq k\leq n_j-1.
\end{align}
	
For both types of multiple orthogonality, the case where the number of measures is $r=1$ corresponds to standard orthogonality.
A polynomial sequence $\polyseq$ is orthogonal with respect to a measure $\mu$ if
\begin{align}
\label{standard orthogonality conditions}
\int x^kP_n(x)\dd\mu(x)
=\begin{cases}
0, &\text{ if } 0\leq k\leq n-1,\\
N_n\neq 0, &\text{ if } n=k.
\end{cases}
\end{align}

The orthogonality conditions for type I and type II multiple orthogonal polynomials give a non-homogeneous system of $|\n|$ linear equations for the $|\n|$ unknown coefficients of the vector of polynomials $\left(A_{\n}^{(0)},\cdots,A_{\n}^{(r-1)}\right)$ in \eqref{orthogonality conditions type I} or the polynomials $P_{\n}(x)$ in \eqref{orthogonality conditions type II}. 
If the solution exists, it is unique and the corresponding matrices of the system for type I and type II are the transpose to each other. 
However it is possible that this system doesn't have a solution, unless further conditions are imposed
(unlike standard orthogonality on the real line, the existence of such solutions is not a trivial matter). 
If there is a unique solution, then the multi-index $\n$ is called \textit{normal} and if all multi-indices are normal, the system is a \textit{perfect system}. 

An example of systems known to be perfect are the Algebraic Tchebyshev systems, or simply \textit{AT-systems}.
A vector of measures $\dis(\mu_0,\cdots,\mu_{r-1})$ is an AT-system on an interval $\Delta$ for a multi-index $\n=(n_0,\cdots,n_{r-1})\in\N^r$ if the measures $\mu_j(x)$ are absolutely continuous with respect to a common positive measure $\mu$ on $\Delta$, that is, $\dis\dd\mu_j(x)=w_j(x)\dd\mu(x)$, for each $0\leq j\leq r-1$ and for some weight functions $w_j(x)$, and the set of functions
\begin{equation*}
\bigcup_{j=0}^{r-1}\left\{w_j(x),xw_j(x),\cdots,x^{n_j-1}w_j(x)\right\}
\end{equation*}
forms a Chebyshev system on $\Delta$, that is, if for any polynomials $p_0,\cdots,p_{r-1}$ of degree not greater than $n_j-1$, for each $0\leq j\leq r-1$, and not all equal to $0$, the function $\dis\sum_{j=0}^{r-1}p_j(x)w_j(x)$ has at most $|\n|-1$ zeros on $\Delta$.
A vector of measures $\dis(\mu_0,\cdots,\mu_{r-1})$ is an AT-system on an interval $\Delta$ if it is an AT-system on $\Delta$ for every multi-index in $\N^r$.

Another special example of a perfect system is a \textit{Nikishin system} (first introduced in \cite{NikishinSystems}). 
We say that two measures  $(\mu_0,\mu_1)$  form a Nikishin system of order $2$, if they are both supported on an interval $\Delta_0$ and if there exists a positive measure $\sigma$ on an interval $\Delta_1$ with $\Delta_0\cap\Delta_1=\emptyset$ such that
\begin{align}
\label{Nikishin system - ratio of the measures}
\frac{\dd\mu_1(x)}{\dd\mu_0(x)}=\int_{\Delta_1}\frac{\dd\sigma(t)}{x-t}.
\end{align}

The definition of a Nikishin system can be generalised to define a Nikishin system of $r>2$ measures. 
It was proved in \cite{NikishinSystemsArePerfect} that every Nikishin system is perfect (see also \cite{NikishinSystemsArePerfectCaseOfUnboundedAndTouchingSupports} for the cases where the supports of the measures are unbounded or where consecutive intervals touch at one point).
More precisely, it is proved in \cite{NikishinSystemsArePerfect} and \cite{NikishinSystemsArePerfectCaseOfUnboundedAndTouchingSupports} that every Nikishin system is an AT-system, therefore it is perfect. 
Moreover, for every AT-system and for any $\n\in\N^r$, the type I function for $Q_{\n}$ defined by \eqref{type I function definition} has exactly $|\n|-1$ sign changes on $\Delta$ and the type II multiple orthogonal polynomial $P_{\n}$ has $|\n|$ simple zeros on $\Delta$ which satisfy an interlacing property as there is always a zero of $P_{\n}$ between two consecutive zeros of $P_{\n+\vec{e}_k}$, for each $0\leq k\leq r-1$, where $\vec{e}_k\in\N^r$ is the multi-index that has all entries equal to $0$ except the entry of index $k$ which is equal to $1$.
As a Nikishin system is always an AT-system, the same properties hold for Nikishin systems.

The main contribution of this paper is on multi-indices on the step line. 
A multi-index $\indexedvector{r}\in\N^r$ is on the step-line if $n_0\geq n_1\geq\cdots\geq n_{r-1}\geq n_0-1$ or, equivalently, if there exists $m\in\N$ and $0\leq j\leq r-1$ such that
\begin{align*}
n_k=
\begin{cases}
m+1, & \text{ if } 0\leq k<j,\\
  m, & \text{ if } j\leq k\leq r-1.
\end{cases}
\end{align*}

For any $r\in\Z^+$ and for each $n\in\N$, there is a unique multi-index of length $n$ on the step line of $\N^r$.
More precisely, if $n=rm+j$, with $m,j\in\N$ and $0\leq j\leq r-1$, the multi-index of length $n$ is $\dis\n=\indexedvector{r}\in\N^r$ with entries as described above.
Hence, when the number of measures is fixed and we only consider multi-indices on the step line, we can replace the multi-index of the multiple orthogonal polynomials of both type I and type II by its length without any ambiguity.
When $r=2$, the indexes on the step line are illustrated in Figure \ref{fig:step line}.

For the type II multiple orthogonal polynomials on the step line, we obtain a polynomial sequence with exactly one polynomial of degree $n$ for each $n\in\N$.
These are often referred to as \textit{$d$-orthogonal polynomials} (where $d$ is the number of measures, so $d=r$), as introduced in \cite{MaroniOrthogonalite}. 
In the case of $r=2$ measures, the  type II multiple orthogonality conditions \eqref{orthogonality conditions type II} on the step line correspond to say that if we set 
\begin{equation}\label{Pn stepline}
	P_{2n}(x) = P_{n,n}(x) \quad \text{and}\quad 
	P_{2n+1}(x) = P_{n+1,n}(x), 
\end{equation}
then  the polynomial sequence $\dis\polyseq$ 
is \textit{$2$-orthogonal} with respect to a pair of measures $(\mu_0,\mu_1)$: 
\begin{align}
\label{2-orthogonality conditions}
\int x^kP_n(x)\dd\mu_0(x)
=\begin{cases}
0, &\text{ if } n\geq 2k+1,\\
N_n\neq 0, &\text{ if } n=2k,
\end{cases}
\text{ and }
\int x^kP_n(x)\dd\mu_1(x)
=\begin{cases}
0, &\text{ if } n\geq 2k+2,\\
N_n\neq 0, &\text{ if } n=2k+1.
\end{cases}
\end{align}

In \eqref{Pn stepline} and throughout we have considered the step line to be the lower step line as illustrated in Figure \ref{fig:step line}. 
If we were to consider the polynomials on the upper step line, then these happened to be $2$-orthogonal with respect to the vector of measures 
$(\mu_1,\mu_0)$. 


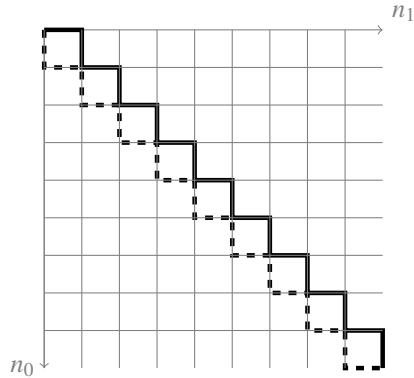
\begin{figure}[ht]
\centering
\begin{tikzpicture}[domain=0:2]
\draw[->,color=gray] (0,4.5) -- (4.5,4.5)  node[above right] {$n_1$};
\draw[->,color=gray] (0,4.5)  --  (0,0)  node[left] {$n_0$};
\draw[line width=0.6mm, color=black,solid] (0,4.5) -- (0.5,4.5) -- (0.5,4) -- (1, 4) -- (1,3.5) -- (1.5,3.5) -- (1.5, 3) -- (2,3) -- (2,2.5) -- (2.5,2.5) -- (2.5 , 2) -- (3,2) -- (3 , 1.5) -- (3 .5, 1.5) --(3.5 , 1) --(4 , 1) -- (4 , 0.5) --(4.5 , 0.5) -- (4.5, 0);
\draw[line width=0.6mm, color=black,dashed] (0,4.5) -- (0,4) -- (0.5,4) -- (0.5, 3.5) -- (1,3.5) -- (1,3) -- (1.5, 3) -- (1.5,2.5) -- (2,2.5) -- (2,2) -- (2.5 , 2) -- (2.5,1.5) -- (3,1.5) -- (3,1) -- (3.5, 1) -- (3.5 , 0.5) -- (4, 0.5) -- (4 , 0)-- (4.5, 0);
\draw[-, color=gray] (4,0) -- (4,4.5);
\draw[-, color=gray] (3.5,0) -- (3.5,4.5);
\draw[-, color=gray] (3,0) -- (3,4.5);
\draw[-, color=gray] (2.5,0) -- (2.5,4.5);
\draw[-, color=gray] (2,0) -- (2,4.5);
\draw[-, color=gray] (1.5,0) -- (1.5,4.5);
\draw[-, color=gray] (1,0) -- (1,4.5);
\draw[-, color=gray] (0.5,0) -- (0.5,4.5);
\draw[-, color=gray] (0,0) -- (0,4.5);

\draw[-, color=gray] (0,4) -- (4.5,4);
\draw[-, color=gray] (0,3.5) -- (4.5,3.5);
\draw[-, color=gray] (0,3) -- (4.5,3);
\draw[-, color=gray] (0,2.5) -- (4.5,2.5);
\draw[-, color=gray] (0,2) -- (4.5,2);
\draw[-, color=gray] (0,1.5) -- (4.5,1.5);
\draw[-, color=gray] (0,1) -- (4.5,1);
\draw[-, color=gray] (0,0.5) -- (4.5,0.5);
\end{tikzpicture}
\caption{Upper and lower step line for the multi-index  $(n_0,n_1)$ in solid and dashed black line, respectively, when $r=2$.
}
\label{fig:step line}
\end{figure}

There is a well-known connection between orthogonal polynomials and recurrence relations.
The spectral theorem for orthogonal polynomials (also known as Shohat-Favard theorem) states that a polynomial sequence $\{p_n\}_{n\in\N}$ is orthogonal with respect to some measure $\mu$ if and only if it satisfies a second order recurrence relation of the form
\begin{align*}
p_{n+1}(x)=(x-\beta_n)p_n(x)-\gamma_{n} p_{n-1}(x),
\end{align*}
with $\gamma_n\neq 0$, for all $n\geq 1$, and initial conditions $p_{-1}=0$ and $p_0=1$. 
Moreover, if $\beta_n\in\R$ and $\gamma_{n+1}>0$, for all $n\in\N$, then $\mu$ is a positive measure on the real line.

Multiple orthogonal polynomials also satisfy recurrence relations. In particular, when the multi-indexes lie on the step line, a polynomial sequence $\polyseq$ is $r$-orthogonal, and satisfies \eqref{2-orthogonality conditions}, if and only if it satisfies a recurrence relation of order $r+1$ of the form
\begin{align}
\label{recurrence relation for r-orthogonality}
P_{n+1}(x)=(x-\beta_n)P_n(x)-\sum_{j=1}^{r}\gamma_{n+1-j}^{\parameter{j}} P_{n-j}(x),
\end{align}
with $\gamma_{n}^{(r)}\neq 0$, for all $n\geq 1$, and initial conditions $P_{-(r-1)}=\cdots=P_{-1}=0$ and $P_0=1$, see \cite{MaroniOrthogonalite}.
Naturally, when $r=2$, the relation \eqref{recurrence relation for r-orthogonality} reduces to the third order recurrence relation
\begin{align}
\label{recurrence relation for a 2-OPS}
P_{n+1}(x)=(x-\beta_n)P_n(x)-\alpha_{n} P_{n-1}(x)-\gamma_{n-1} P_{n-2}(x),
\end{align}
with $\gamma_n\neq 0$, for all $n\geq 1$, and initial conditions $P_{-2}=P_{-1}=0$ and $P_0=1$.

For the type I multiple orthogonal polynomials on the step line for $r=2$ measures, 
we have $\dis\deg(A_n)\leq\floor{\frac{n-1}{2}}$ and $\dis\deg(B_n)\leq\floor{\frac{n}{2}}-1$, that is, $\deg(A_n)=m-1$, if $n=2m$ or $n=2m-1$, and $\deg(B_n)=m-1$, if $n=2m$ or $n=2m+1$. 
Moreover, assuming that there exists a positive measure $\mu$ and a pair of weight functions $\dis(w_0,w_1)$ such that $\dis\dd\mu_0(x)=w_0(x)\dd\mu(x)$ and $\dis\dd\mu_0(x)=w_1(x)\dd\mu(x)$, the type I function is
\begin{align}
Q_n(x)=A_n(x)w_0(x)+B_n(x)w_1(x)
\end{align}
and the orthogonality and normalisation conditions correspond to
\begin{align}
\int x^kQ_n(x)\dd\mu(x)
=\begin{cases}
0, &\text{ if } 0\leq k\leq n-2,\\
1, &\text{ if } k=n-1.
\end{cases}
\end{align}
For further information about multiple orthogonal polynomials and Nikishin systems, we refer to \cite[Ch.~23]{IsmailBook} and \cite{GuillermoSurvey}.

In Section \ref{Multiple orthogonality}, we prove that the weight functions $\W(x;a,b,c)$ and $\W(x;a,b,c+1)$ defined in \eqref{modified Tricomi weight definition} form a Nikishin system. This readily imply that the multiple orthogonal polynomials of both type I and type II with respect to these weight functions exist and are unique for every multi-index $\n=(n_0,n_1)\in\N^2$ and their zeros satisfy the properties mentioned before for Nikishin systems (and AT-systems in general). Then we obtain differential equations satisfied by these weight functions, which we use to deduce differential properties for the multiple orthogonal polynomials of both type I and type II on the step line (see Theorem \ref{differentiation formulas for the multiple orthogonal polynomials - theorem}) and a Rodrigues-type formula for the type I polynomials (see Theorem \ref{Rodrigues formula for the type I polynomials - theorem}).

Section \ref{Type II multiple orthogonal polynomials} is devoted to the characterisation of the type II multiple orthogonal polynomials on the step-line (ie, the {\it $2$-orthogonal} polynomial sequences). A remarkable property of these polynomials (a straightforward consequence of Theorem \ref{differentiation formulas for the multiple orthogonal polynomials - theorem}) is that they satisfy the so called {\it Hahn's property}, meaning that the sequence of its derivatives is again 2-orthogonal. As such, they stand as an example of a Hahn-classical $2$-orthogonal family. Our detailed characterisation of these polynomials includes: an explicit expression for these polynomials as a terminating generalised hypergeometric series, more precisely a $\dis{}_{2}F_{2}$ (see Theorem \ref{explicit formulas for the 2-orthogonal polynomials}); explicit  third order differential equation (in Theorem \ref{thm: diff eq}) as well as a third order recurrence relation (in Theorem \ref{recurrence relation satisfied by the 2-OPS and asymptotic behaviour of the recurrence coefficients}) to which these type II polynomials on the step-line  are a solution;  an asymptotic upper bound for their largest zeros; limiting relations between these polynomials and multiple orthogonal polynomials with res\-pect to weight functions involving the modified Bessel function of second kind $\BesselK{\nu}$ (see \eqref{Bessel weights definition}) studied in \cite{BenCheikhandDouak} and \cite{SemyonandWalter}. It turns out that each of the sequences of recurrence coefficients is unbounded and asymptotically periodic of period $2$. As such, we believe this is the first explicit example of a Nikishin system associated with such periodic unbounded recurrence coefficients.  

Earlier we mentioned generalised hypergeometric series, which are formally defined by
\begin{align}
\label{generalised hypergeometric function}
\Hypergeometric[z]{p}{q}{\alpha_1,\cdots,\alpha_p}{\beta_1,\cdots,\beta_q}
=\sum_{n=0}^{\infty}
\frac{\pochhammer{\alpha_1}\cdots\pochhammer{\alpha_p}}
{\pochhammer{\beta_1}\cdots\pochhammer{\beta_q}}
\frac{z^n}{n!} \ ,
\end{align}
where $p,q\in\N$, $z,\alpha_1,\cdots,\alpha_p\in\C$ and $\beta_1,\cdots,\beta_p\in\C\backslash\{-n : \ n\in\N\}$.
If one the parameters $\alpha_1,\cdots,\alpha_p$ is a non-positive integer the series \eqref{generalised hypergeometric function} terminates and defines a polynomial.


In Section \ref{Connection with Hahn-classical 3-fold symmetric 2-orthogonal polynomials} we explain that particular cases of the type II polynomials on the step line, characterised here, have appeared in \cite{AnaandWalter} as the components of $3$-fold symmetric Hahn-classical $2$-orthogonal polynomials on star-like sets. 
%
A polynomial sequence $\dis\polyseq$ is said to be \textit{$3$-fold symmetric} if, for any $n\in\N$,
\begin{align*}
P_n\left(\e^{\frac{2\pi i}{3}} x\right)=\e^{\frac{2n\pi i}{3}} P_n(x)
\text{ and }
P_n\left(\e^{\frac{4\pi i}{3}} x\right)=\e^{\frac{4n\pi i}{3}} P_n(x).
\end{align*}
This definition is equivalent to say that there exist three polynomial sequences $\dis\polyseq[P_n^\kk(x)]$, each supra indexed  with $k\in\{0,1,2\}$, which are called the cubic components of $\dis\polyseq$, such that 
\begin{equation}\label{cubic comp}
P_{3n+k}(x)=x^k P^\kk_n(x^3), \quad \text{for all }\quad n\in\N.
\end{equation}
In this section we give a result of independent interest, Theorem \ref{The cubic decomposition preserves the Hahn-classical property - theorem}, where we show that the cubic components of  $3$-fold symmetric Hahn-classical $2$-orthogonal polynomials are themselves Hahn-classical. 

The main contribution of this paper are the results in Sections \ref{Multiple orthogonality} and \ref{Type II multiple orthogonal polynomials} characterising multiple orthogonal polynomials with respect to the Nikishin system. The centre of the analysis is for the indices on the upper and lower step line (see Fig \ref{fig:step line}).   
The study of the multiple orthogonal polynomials with respect to the same system for indices out of the step line and, in particular, the study of the (standard) orthogonal polynomials with respect to the weight function $\W(x;a,b,c)$, defined by \eqref{modified Tricomi weight definition}, remains an open (and challenging) problem. Partly this is due to the fact that when the weight function is a solution to a second order differential equation, then known techniques to obtain closed or explicit formulas for recurrence coefficients of standard orthogonal polynomials is, up to now, an onerous task. An example of such weights are those studied here and given in \eqref{modified Tricomi weight definition} or those expressed in terms of Bessel functions in \eqref{Bessel weights definition}. Notwithstanding, a deep grasp of the multiple orthogonal polynomials on the step line is at the core of applications. The present investigation focus essentially on the latter. 
	
\section{Multiple orthogonality}\label{Multiple orthogonality}

The starting point of this investigation is on the weight function $\W(x;a,b,c)$  in \eqref{modified Tricomi weight definition}. The goal is to describe a system of multiple orthogonal polynomials with respect to $\W(x;a,b,c)$ and $\W(x;a,b,c+1)$. The first question to address is on whether such a system exists and, if so, whether it is unique. We are able to answer afirmatively to both issues because we are dealing with a Nikishin system of measures, as explained in Section \ref{A Nikishin system and uniqueness of the multiple orthogonal polynomials}. From this we want to move on to the characterisation of such system of polynomials. We succeed in doing so for the case where the indices lie on the step-line. Using key differential properties for the vector of weights, derived in Section \ref{Differential properties of the weight functions}, we obtain differential properties for the corresponding polynomials of both types in Section \ref{Differential properties of the multiple orthogonal polynomials}. We continue the analysis by providing a Rodrigues-type formula for the type I functions in Section \ref{Rodrigues-type formula for type I multiple orthogonal polynomials}. 
Concerning the type II, we defer their investigation to Section \ref{Type II multiple orthogonal polynomials}.

\subsection{Nikishin system}
\label{A Nikishin system and uniqueness of the multiple orthogonal polynomials}
The vector of weight functions $\dis\left( {\W(x;a,b,c)}\ , \ {\W(x;a,b,c+1)}\right)$ forms a Nikishin system, as stated in Theorem \ref{Nikishin system}. An important consequence of this result is that both type I and II multiple orthogonal polynomials with respect to the weight functions appearing on Theorem \ref{Nikishin system} exist and are unique for any multi-index $\dis(n_0,n_1)\in\N^2$.
Moreover, the type I multiple orthogonal polynomials $A_{(n_0,n_1)}$ and $B_{(n_0,n_1)}$ have degree exactly $n_0-1$ and $n_1-1$, respectively, and the type II multiple orthogonal polynomial $P_{(n_0,n_1)}$ has $n_0+n_1$ positive real simple zeros that satisfy an interlacing property: there is always a zero of $P_{(n_0,n_1)}$ between two consecutive zeros of $P_{(n_0+1,n_1)}$ or $P_{(n_0,n_1+1)}$.

On the one hand, the Nikishin property can be deduced through the connection between continued fractions and Stieltjes transforms, by guaranteeing the existence of a generating measure, as described in \eqref{Nikishin system - ratio of the measures} . On the other hand, this property can be proved by providing an integral representation for the generating measure $\sigma$ in \eqref{Nikishin system - ratio of the measures} for this Nikishin system, and this is explained at the end of this subsection.  

To start with, we recall some properties from continued fractions and we follow the notation in  \cite{ContinuedFractionsForSpecialFunctions} to describe a continued fraction: 
\begin{align}
\label{continued fraction}
\beta_0+\contfrac{\alpha_j}{\beta_j}
:=\beta_0+\cfrac{\alpha_1}{\beta_1+\cfrac{\alpha_2}{\beta_2+\cdots}}.
\end{align}
Particularly relevant are the so-called {\it S-fractions} or {\it Stieltjes continued fractions}, which are obtained if in \eqref{continued fraction} we set  $\beta_0=0$, $\alpha_n=1$, $\beta_{2n-1}=a_{2n-1}\, z$ and $\beta_{2n}=a_{2n}$, with $a_{2n-1},a_{2n}\in\R^+$, for $n\geq 1$, to get: 
\begin{align}
\label{S-fraction}
\cfrac{1}{a_1z+\cfrac{1}{a_2+\cdots+\cfrac{1}{a_{2n-1}z+\cfrac{1}{a_{2n}+\cdots}}}} \ .
\end{align}

Stieltjes showed in \cite{Stieltjesmemoir} that S-fractions can be represented as a Stieltjes transform of a measure with support in $(-\infty,0]$, that is, an integral of the form
\begin{align}
\label{Stieltjes integral}
\int_{-\infty}^{0}\frac{\dd\sigma(-t)}{x-t}=\int_{0}^{\infty}\frac{\dd\sigma(u)}{x+u},
\end{align}
where $\sigma$ is a non decreasing bounded function such that $\dis\sigma(0)=0$ and $\dis\lim_{u\to\infty}\sigma(u)=\frac{1}{a_1}$.

Another special type of continued fraction, known as J-fraction, is obtained if we set, in \eqref{continued fraction}, $\beta_0=0$ and, for each $n\geq 1$, $\alpha_n=c_n^2$ and $\beta_n=z+b_n$, for some $c_n,b_n\in\C$:
\begin{align}
\label{J-fraction}
\cfrac{c_1^2}{z+b_1+\cfrac{c_2^2}{z+b_2+\cdots+\cfrac{c_n^2}{z+b_{n}+\cdots}}}.
\end{align}

If every $c_n,b_n\in\R^+$ then the J-fraction generated by them can be obtained by contraction from a S-fraction (see \cite{Stieltjesmemoir}) and, as a result, it can also be represented as a Stieltjes transform with support in $(-\infty,0]$.
These results about continued fractions and Stieltjes transforms can also be found in \cite[Ch.~13]{WallContinuedFractions} and they are used here to prove the following result.

\begin{theorem}
\label{Nikishin system}
Let $a,b,c\in\R$  be such that $a>-1$, $b>-1$ and $c>\max\{0,a-b\}$. 
Then the vector of weight functions $\dis\left({\W(x;a,b,c)} \ , \ {\W(x;a,b,c+1)}\right)$ defined by \eqref{modified Tricomi weight definition} forms a Nikishin system.
\end{theorem}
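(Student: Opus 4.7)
The plan is to show that the ratio $\W(x;a,b,c+1)/\W(x;a,b,c)$ admits a representation as a Stieltjes transform
\[
\int_{0}^{\infty} \frac{\dd\sigma(u)}{x+u}
\]
of a positive measure $\sigma$ supported on $[0,\infty)$; equivalently, of a positive measure on $(-\infty,0]$, which is disjoint from the support $[0,\infty)$ of the original measures and hence fulfils the Nikishin condition \eqref{Nikishin system - ratio of the measures}. From the definition \eqref{modified Tricomi weight definition}, the exponential and $x^a$ factors cancel and the ratio collapses to
\[
\frac{\W(x;a,b,c+1)}{\W(x;a,b,c)} = (c+b+1)\,\frac{\KummerU{c+1}{a-b+1}}{\KummerU{c}{a-b+1}},
\]
so the problem reduces to representing a ratio of two consecutive Tricomi functions as a Stieltjes transform.

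The route I would take is through continued fractions, following the framework of \eqref{S-fraction}--\eqref{J-fraction} that the authors have just set up. There is a classical S-fraction expansion for the ratio $\KummerU{\alpha+1}{\gamma}/\KummerU{\alpha}{\gamma}$ of confluent hypergeometric functions of the second kind (see e.g.\ \cite{ContinuedFractionsForSpecialFunctions} or Wall \cite{WallContinuedFractions}), whose partial numerators/denominators are explicit linear expressions in $\alpha$ and $\gamma$. Specialising $\alpha=c$ and $\gamma=a-b+1$, the plan is to verify that under the standing hypotheses $a>-1$, $b>-1$, $c>\max\{0,a-b\}$ every partial denominator $a_n$ appearing in the S-fraction \eqref{S-fraction} is strictly positive. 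The positivity constraint $c > \max\{0, a-b\}$ is exactly the one needed to keep the shifted parameters $c+n$ and $c-a+b+n$ positive for all $n\geq 1$, and hence to place the continued fraction in the scope of Stieltjes's theorem.

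Once positivity of the $a_n$'s is confirmed, Stieltjes's representation theorem for S-fractions supplies a bounded non-decreasing $\sigma$ on $[0,\infty)$, with $\sigma(0)=0$, such that the S-fraction equals $\int_{0}^{\infty}\dd\sigma(u)/(x+u)$, for $x$ outside $(-\infty,0]$. Pulling the constant $(c+b+1)$ inside $\sigma$ and performing the change of variable $u=-t$ (so that the representing measure lives on $(-\infty,0]\subset\R\setminus[0,\infty)$), we obtain exactly \eqref{Nikishin system - ratio of the measures} with $\Delta_0=[0,\infty)$ and $\Delta_1\subset(-\infty,0]$, and so the pair $\big(\W(x;a,b,c),\W(x;a,b,c+1)\big)$ is a Nikishin system.

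The main obstacle I anticipate is the bookkeeping step of locating, or rederiving, the specific S-fraction for the ratio of Tricomi functions and matching its coefficients to the parameter constraints. A J-fraction contraction in the sense of \eqref{J-fraction} is an acceptable fallback: if one only has a J-fraction with $b_n,c_n>0$, then Stieltjes's contraction argument, invoked in the excerpt via \cite{Stieltjesmemoir,WallContinuedFractions}, still delivers a representing measure on $(-\infty,0]$. The alternative route mentioned by the authors, namely writing the generating measure $\sigma$ through an explicit integral representation involving $\KummerU{c}{a-b+1}$ in the denominator, can also be pursued and would bypass the continued-fraction machinery altogether; if the S-fraction form proves unwieldy, this direct approach becomes the natural Plan B.
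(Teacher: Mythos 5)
Your proposal is correct and follows essentially the same route as the paper: reduce the ratio $\W(x;a,b,c+1)/\W(x;a,b,c)$ to a ratio of consecutive Tricomi functions, expand that ratio as a continued fraction whose coefficients are positive precisely because $c>\max\{0,a-b\}$, and invoke Stieltjes's representation theorem to obtain a generating measure supported on $(-\infty,0]$. The paper in fact takes your ``fallback'' J-fraction path, combining the three-term recurrence \cite[Eq.~13.3.7]{DLMF} with the known continued fraction for $\KummerU{c-1}{a-b+1}/\KummerU{c}{a-b+1}$ from \cite[Eq.~16.1.20]{ContinuedFractionsForSpecialFunctions}, and also records your Plan~B (the explicit Ismail--Kelker integral representation of $\sigma$) immediately after the proof.
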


\begin{proof}
Following the definition of $\dis\W(x;a,b,c)$, observe that 
\begin{equation}\label{ratio of Ws}
\frac{\W(x;a,b,c+1)}{\W(x;a,b,c)}=\frac{(c+b+1)\KummerU{c+1}{a-b+1}}{\KummerU{c}{a-b+1}}.
\end{equation}
According to \cite[Eq.~13.3.7]{DLMF}, we have 
\[
c(c+b-a)\KummerU{c+1}{a-b+1}=(x+2c+b-a-1)\KummerU{c}{a-b+1}-\KummerU{c-1}{a-b+1}, 
\]
and, based on  \cite[Eq.~16.1.20]{ContinuedFractionsForSpecialFunctions}, we also have 
\[
\frac{\KummerU{c-1}{a-b+1}}{\KummerU{c}{a-b+1}}
=x+2c+b-a-1-\contfrac{(c+j-1)(c+b-a+j-1)}{x+2c+b-a+2j-1}.
\]
Combining the three latter equations, we obtain
\begin{equation}
\frac{\W(x;a,b,c+1)}{\W(x;a,b,c)}
=\frac{c+b+1}{c(c+b-a)}\contfrac{(c+j-1)(c+b-a+j-1)}{x+2c+b-a+2j-1}.
\end{equation}

As $c>\max\{0,a-b\}$ then, for any $j\geq 1$, we have $c+j-1,c+b-a+j-1,2c+b-a+2j-1>0$. 
Hence we know that there exists a measure $\sigma$ such that
\begin{equation}
\label{continued fraction as a Stieltjes transform}
\contfrac{(c+j-1)(c+b-a+j-1)}{x+2c+b-a+2j-1}
=\int_{-\infty}^{0}\frac{\dd\sigma(t)}{x-t}
\end{equation}
and, as a result, we deduce 
\begin{equation}
\label{ratio of the weights as a Stieltjes transform}
\frac{\W(x;a,b,c+1)}{\W(x;a,b,c)}
=\int_{-\infty}^{0}\frac{c+b+1}{c(c+b-a)}\frac{\dd\sigma(t)}{x-t}, 
\end{equation}
which shows that the vector of measures  $\dis\left({\W(x;a,b,c)} \ , \ {\W(x;a,b,c+1)}\right)$  forms a Nikishin system.\end{proof}
\vspace*{0,1 cm}

Now we find the generating measure $\sigma$ in \eqref{continued fraction as a Stieltjes transform} and \eqref{ratio of the weights as a Stieltjes transform}. 
In \cite{IsmailandKelker} it is proved that the integral representation
\begin{equation}
\label{integral representation for the Tricomi function ratio}
\frac{\KummerU{\alpha+1}{\beta+1}}{\KummerU{\alpha}{\beta}}
=\int_{-\infty}^{0}\frac{(-t)^{-\beta}e^{-t}\left|\KummerU[t]{\alpha}{\beta}\right|^{-2}\dd{\mathrm{t}}} {(x-t)\Gamma(\alpha+1)\Gamma(\alpha-\beta+1)}
\end{equation}
is valid for $\alpha>0$ and $\beta<1$.
Moreover, the condition $\beta<1$ can be replaced by $\alpha-\beta+1>0$ 
(which is a weaker condition because $\alpha>0$) as this change doesn't interfere with the proof because the conditions $\alpha>0$ and $\alpha-\beta+1>0$ are sufficient to guarantee that $\KummerU{\alpha}{\beta}$ has no zeros in the region $|\mathrm{arg}x|<\pi$ (see \cite[\S 13.9(i)]{DLMF}).
We take $\alpha=c$ and $\beta=a-b+1$ in \eqref{integral representation for the Tricomi function ratio}, then we recall the definition of $\dis\W(x;a,b,c)$  in \eqref{modified Tricomi weight definition} to obtain the following integral representation 
\begin{equation}
\frac{\W(x;a,b,c+1)}{\W(x;a,b,c)}
=(c+b+1)\frac{\KummerU{c+1}{a-b+1}}{\KummerU{c}{a-b+1}}
=\int_{-\infty}^{0}\frac{(c+b+1)(-t)^{b-a-1}e^{t}\left|\KummerU[t]{c}{a-b+1}\right|^{-2}\dd{\mathrm{t}}} {(x-t)\Gamma(c+1)\Gamma(c-b+a)},
\end{equation}
valid for $c>\max\{0,b-a\}$. 


\subsection{Differential properties of the weight functions}
\label{Differential properties of the weight functions}

From this point forth,  we will index the vector of weights $\dis\left( {\W(x;a,b,c)}\ , \ {\W(x;a,b,c+1)}\right)$, as defined in \eqref{modified Tricomi weight definition}, with an extra parameter $d\in\{0,1\}$, by considering
\begin{equation}\label{Wvec}
	\Wvec^\parameter{d}(x;a,b,c)
	:=\dis\left[ \begin{array}{c} \W(x;a,b,c+d) \\  \W(x;a,b,c+1-d) \end{array}\right],\quad \text{with}\quad d\in\{0,1\}. 
\end{equation} 
The parameter $d\in\{0,1\}$ embodies  the flip between the lower and the upper step line indexes of the corresponding multiple orthogonal polynomials of both types. As a consequence, if $\ \dis\polyseq[P_n^\parameter{d}(x;a,b,c)]$ is the monic $2$-orthogonal polynomial sequence and $\dis Q_n^\parameter{d}(x;a,b,c)$ the type I function for the index of length $n$ on the step-line for $\dis\Wvec^\parameter{d}(x;a,b,c)$, then 
\[ P_{2n}^\parameter{1}(x;a,b,c)= P_{2n}^\parameter{0}(x;a,b,c)\quad
\text{and}\quad 
Q_{2n}^\parameter{1}(x;a,b,c)= Q_{2n}^\parameter{0}(x;a,b,c).\]
There are further motivations for the introduction of this parameter $d$. Under the action of the derivative operator, the multiple orthogonal system for $\dis\Wvec^\parameter{d}(x;a,b,c)$  bounces from the lower to the upper step line (and reciprocally) with shifted parameters, as perceivable in Theorem \ref{differentiation formulas for the multiple orthogonal polynomials - theorem}. 
A result that comes as a consequence of Theorem \ref{canonical matrix differential equation satisfied by the weights - theorem} where the vector of weights \eqref{Wvec} is described as a solution to a matrix first order differential equation. Its structure fits into the category of  Hahn-classical type vector of weights, in the sense expounded in \cite{DouakandMaroniClassiquesDeDimensionDeux}. 
Beforehand, in Proposition \ref{2ndOrderDiffEqForTheWeight} we describe the weight function $\dis\W(x;a,b,c)$ in \eqref{modified Tricomi weight definition}  as a solution to a second-order differential equation. \\

\begin{proposition}
\label{2ndOrderDiffEqForTheWeight}
Let $a,b,c\in\R$ such that $a>-1$, $b>-1$ and $c>\max\{0,a-b\}$. Then $\dis\W(x;a,b,c)$ defined in \eqref{modified Tricomi weight definition} satisfies the differential equation 
\begin{align}
\label{differential equation of second order satisfied by W_0}
x^2\W''(x;a,b,c)
+\left(x-(a+b-1)\right)x\W'(x;a,b,c)
+\left(ab-(c+b-1)x\right)\W(x;a,b,c)
=0. 
\end{align}
\end{proposition}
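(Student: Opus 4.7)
The key observation is that, up to the normalising constant $K = \Gamma(c+b+1)/(\Gamma(a+1)\Gamma(b+1))$, the weight $\W(x;a,b,c)$ defined by \eqref{modified Tricomi weight definition} is the product $g(x)U(x)$ where $g(x) = \e^{-x}x^a$ and $U(x) = \KummerU{c}{a-b+1}$ is a solution of Kummer's equation \eqref{Kummer equation} with $\alpha = c$ and $\beta = a-b+1$, i.e.,
\begin{equation*}
x\,U''(x) + (a-b+1-x)\,U'(x) - c\,U(x) = 0.
\end{equation*}
The plan is to push this second-order ODE for $U$ forward to a second-order ODE for $\W$ via the change of dependent variable $U = \W/(Kg)$.

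Concretely, I would introduce the logarithmic derivative $L(x) := g'(x)/g(x) = (a-x)/x$, which satisfies the handy identities $xL = a-x$ and $L' = -a/x^2$. Differentiating $U = \W/(Kg)$ once and then twice, and grouping the $\W$-derivatives together, yields
\begin{equation*}
Kg\,U' = \W' - L\,\W, \qquad Kg\,U'' = \W'' - 2L\,\W' + (L^2 - L')\,\W.
\end{equation*}
Substituting these expressions into the Kummer equation above and clearing the common non-vanishing factor $1/(Kg)$ produces a linear second-order ODE for $\W$ whose coefficients are rational functions of $x$, namely
\begin{equation*}
x\,\W'' + \bigl[(a-b+1-x) - 2xL\bigr]\W' + \bigl[x(L^2 - L') - (a-b+1-x)L - c\bigr]\W = 0.
\end{equation*}

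What remains is algebraic simplification of those two coefficients. Using $xL = a-x$, the coefficient of $\W'$ collapses immediately to $x - (a+b-1)$. The coefficient of $\W$ requires a touch more work: expanding $(a-b+1-x)(a-x)$ and using $L' = -a/x^2$ one finds that the $x^2$ terms and the purely $x$-independent terms cancel, leaving $ab/x - (c+b-1)$. Multiplying the equation through by $x$ then yields exactly \eqref{differential equation of second order satisfied by W_0}, since $\W$ and $y := gU$ differ only by the constant factor $K$. The only place that demands care is the cancellation inside the coefficient of $\W$; everything else is mechanical.
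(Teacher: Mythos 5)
Your proof is correct and follows essentially the same route as the paper: both arguments exploit the fact that $\W(x;a,b,c)$ is, up to a constant, $\e^{-x}x^a$ times a solution of Kummer's equation \eqref{Kummer equation} with $\alpha=c$, $\beta=a-b+1$, and transfer that second-order equation to $\W$; the paper differentiates $\W$ twice and eliminates $\U''$ via \eqref{Kummer equation}, whereas you run the same conjugation in the opposite direction by expressing $KgU'$ and $KgU''$ through $\W$, $\W'$, $\W''$ and the logarithmic derivative $L=(a-x)/x$. The only blemish is descriptive rather than mathematical: in the coefficient of $\W$ the constant term of the numerator does not cancel but equals $ab$ (which is exactly what produces the $ab/x$ term), yet your stated final coefficient $ab/x-(c+b-1)$ and hence \eqref{differential equation of second order satisfied by W_0} are correct.
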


\begin{proof}
Let  $\dis\lambda=\frac{\Gamma(c+b+1)}{\Gamma(a+1)\Gamma(b+1)}$ and $\dis\U(x)=\KummerU{c}{a+1-b}$.
We differentiate \eqref{modified Tricomi weight definition} with respect to $x$ to obtain 
\begin{align}\label{W prime}
\DiffOp\W(x;a,b,c)=\lambda\e^{-x}x^{a-1}\left(x \ \dis\U'(x) +(a-x) \dis\U(x)\right). 
\end{align} 
Another differentiation brings 
\[
\frac{\mathrm{d}^2}{\mathrm{d}x^2}\W(x;a,b,c)=\lambda\e^{-x}x^{a-2}\left(x^2\U''(x)+2(a-x)x\U'(x)+\left(x^2-2ax+a(a-1)\right)\U(x)\right).
\]
Based on \eqref{Kummer equation}, we have $\dis x\U''(x)=(x-a-1+b)\U'(x)+c\U(x)$, so that we have   
\[
\frac{\mathrm{d}^2}{\mathrm{d}x^2}\W(x;a,b,c)=\lambda\e^{-x}x^{a-2}\left((a+b-1-x)x\ \U'(x)+\left(x^2+(c-2a)x+a(a-1)\right)\U(x)\right).
\]
Finally, combining the latter expression with the definition of $\W(x;a,b,c)$ and \eqref{W prime}, we deduce \eqref{differential equation of second order satisfied by W_0}.
\end{proof}

To prove Theorem \ref{canonical matrix differential equation satisfied by the weights - theorem} we need the following  technical result, which allows us to write $\dis\DiffOp\left(x\ \W(x;a,b,c+d)\right)$, with $d\in\{0,1\}$, as a linear combination of $\dis\W(x;a,b,c)$ and $\dis\W(x;a,b,c+1)$.

\begin{lemma}
\label{initial matrix differential relations between the weights}
Let $a,b,c\in\R$ such that $a>-1$, $b>-1$ and $c>\max\{0,a-b\}$. If $\Wvec^\parameter{d}(x;a,b,c)$ is the vector defined in \eqref{Wvec}, then
\begin{equation}
\label{matrix differential relation between the weights d=0}
\DiffOp\left(x\Wvec^\parameter{0}(x;a,b,c)\right)
=-\matrixtwobytwo{x+c-a-1}{-\frac{c(c+b-a)}{c+b+1}}{c+b+1}{-(c+b+1)}\Wvec^\parameter{0}(x;a,b,c)
\end{equation}
and
\begin{equation}
\label{matrix differential relation between the weights d=1}
\DiffOp\left(x\Wvec^\parameter{1}(x;a,b,c)\right)
=-\matrixtwobytwo{-(c+b+1)}{c+b+1}{-\frac{c(c+b-a)}{c+b+1}}{x+c-a-1}\Wvec^\parameter{1}(x;a,b,c).
\end{equation}
\end{lemma}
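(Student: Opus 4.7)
The plan is to prove the two matrix identities \eqref{matrix differential relation between the weights d=0} and \eqref{matrix differential relation between the weights d=1} by direct computation on the representation
\[
\W(x;a,b,c)=\lambda\,\e^{-x}x^a\,\KummerU{c}{a-b+1},\qquad \lambda=\frac{\Gamma(c+b+1)}{\Gamma(a+1)\Gamma(b+1)},
\]
combining Kummer's derivative rule $\dis\DiffOp\KummerU{\alpha}{\beta}=-\alpha\,\KummerU{\alpha+1}{\beta+1}$ with two contiguous relations for the Tricomi function, both obtainable from the Laplace-type integral representation of $U$ given in the introduction by a single integration by parts. The key observation is that the two components of $\Wvec^\zero(x;a,b,c)$ differ only in the first parameter of $U$, with the shift $c\mapsto c+1$ introducing an extra factor $c+b+1$ in the normalisation that will play a role in the matching of coefficients.

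I would first target the first row of \eqref{matrix differential relation between the weights d=0}. Expanding $\DiffOp(x\,\W(x;a,b,c))$ with the product rule and Kummer's derivative rule produces a term in $\KummerU{c+1}{a-b+2}$, which I would eliminate by means of the contiguous relation
\[
\KummerU{c}{a-b+1}=(c+b-a)\,\KummerU{c+1}{a-b+1}+x\,\KummerU{c+1}{a-b+2}.
\]
Reassembling the resulting expression in terms of $\W(x;a,b,c)$ and $\W(x;a,b,c+1)$, and using the identity $\W(x;a,b,c+1)=(c+b+1)\lambda\,\e^{-x}x^a\,\KummerU{c+1}{a-b+1}$ to absorb the normalisation factor, yields exactly the first scalar identity in \eqref{matrix differential relation between the weights d=0}.

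For the second row, applying the same procedure to $\DiffOp(x\,\W(x;a,b,c+1))$ introduces $\KummerU{c+2}{a-b+2}$. This I would treat in two steps: the second contiguous relation
\[
\KummerU{\alpha}{\beta+1}=\KummerU{\alpha}{\beta}+\alpha\,\KummerU{\alpha+1}{\beta+1},
\]
specialised to $\alpha=c+1$ and $\beta=a-b+1$, converts $(c+1)\,\KummerU{c+2}{a-b+2}$ into $\KummerU{c+1}{a-b+2}-\KummerU{c+1}{a-b+1}$, after which a further application of the first contiguous relation eliminates the residual $\KummerU{c+1}{a-b+2}$. The coefficients then collapse to the clean combination $-(c+b+1)\W(x;a,b,c)+(c+b+1)\W(x;a,b,c+1)$, completing \eqref{matrix differential relation between the weights d=0}. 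The identity \eqref{matrix differential relation between the weights d=1} follows at once by re-reading the same two scalar equations against the swapped ordering of components in $\Wvec^\one(x;a,b,c)$, which is precisely what a permutation of rows and columns of the coefficient matrix accomplishes.

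The main obstacle is bookkeeping rather than any deep technical point: the two contiguous relations must be applied in exactly the right order so that every shift in the second parameter of the Tricomi function cancels, and the normalisation factor $c+b+1$ has to be tracked carefully when passing between $\W(x;a,b,c+1)$ and $\KummerU{c+1}{a-b+1}$. The parameter conditions $a>-1$, $b>-1$, $c>\max\{0,a-b\}$ inherited from \eqref{modified Tricomi weight definition} guarantee both the validity of the integral representation and the vanishing of all boundary terms in the integration-by-parts derivation of the contiguous relations, so no further restrictions are introduced.
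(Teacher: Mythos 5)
Your proposal is correct and follows essentially the same route as the paper: expand $\DiffOp(x\,\W)$ via Kummer's derivative rule and eliminate the resulting shifted Tricomi functions with contiguous relations, then read the two scalar identities into the two matrix orderings. The only (cosmetic) difference is in the second component, where the paper shifts $c\to c+1$ in the first identity and removes the resulting $\W(x;a,b,c+2)$ with the three-term relation in the first parameter, whereas you recompute directly and collapse $\KummerU{c+2}{a-b+2}$ via the pair of relations $\KummerU{\alpha}{\beta+1}=\KummerU{\alpha}{\beta}+\alpha\,\KummerU{\alpha+1}{\beta+1}$ and $x\,\KummerU{c+1}{a-b+2}=\KummerU{c}{a-b+1}-(c+b-a)\KummerU{c+1}{a-b+1}$ — both yield the same clean combination $(c+b+1)\bigl(\W(x;a,b,c+1)-\W(x;a,b,c)\bigr)$.
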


\begin{proof}
Using the fact that $\dis\DiffOp\KummerU{c}{a-b+1}=-c\,\KummerU{c+1}{a-b+2}$ (see \cite[Eq.~13.3.22]{DLMF}), in \eqref{W prime} gives 
\[ 
\DiffOp\left(\W(x;a,b,c)\right)=
\lambda\e^{-x}x^{a-1}\left(-cx\,\KummerU{c+1}{a-b+2}+(a-x)\,\KummerU{c}{a-b+1}\right).
\]
Moreover, since $\dis x\KummerU{c+1}{a-b+2}=\KummerU{c}{a-b+1}-(c+b-a)\KummerU{c+1}{a-b+1}$ (see \cite[Eq.~13.3.10]{DLMF}), it follows that 
\[
\DiffOp\left(\W(x;a,b,c)\right)=
\lambda\e^{-x}x^{a-1}\left(c(c+b-a)\KummerU{c}{a-b+2}+(-c+a-x)\,\KummerU{c}{a-b+1}\right).
\]
As a result of the definition of $\W(x;a,b,c)$ in \eqref{modified Tricomi weight definition}, we obtain 
\begin{equation}
\label{W0'(c) as combination of W0(c) and W0(c+1)}
\DiffOp\left(x\W(x;a,b,c)\right)=-(x+c-a-1)\W(x;a,b,c)+\frac{c(c+b-a)}{c+b+1}\W(x;a,b,c+1), 
\end{equation}
which, after taking the parameter shift $c\to c+1$, reads as
\[
\DiffOp\left(x\W(x;a,b,c+1)\right)=-(x+c-a)\W(x;a,b,c+1)+\frac{(c+1)(c+b-a+1)}{c+b+2}\W(x;a,b,c+2).
\]
The last term in the latter expression can actually be written as 
\[
\frac{(c+1)(c+b-a+1)}{c+b+2}\W(x;a,b,c+2)=(x+2c+b-a+1)\W(x;a,b,c+1)-(c+b+1)\W(x;a,b,c).
\]
because of the identity (see  \cite[Eq.~13.3.7]{DLMF})
\[\dis(c+1)(c+b-a+1)\KummerU{c+2}{a-b+1}=(x+2c+b-a+1)\KummerU{c+1}{a-b+1}-\KummerU{c}{a-b+1}.\] 
As a result, we have 
\begin{equation}
\label{W0'(c+1) as combination of W0(c) and W0(c+1)}
\DiffOp\left(x\W(x;a,b,c+1)\right)=(c+b+1)\W(x;a,b,c+1)-(c+b+1)\W(x;a,b,c).
\end{equation}
Finally, the relations \eqref{matrix differential relation between the weights d=0} and \eqref{matrix differential relation between the weights d=1} are a direct consequence of \eqref{W0'(c) as combination of W0(c) and W0(c+1)} and \eqref{W0'(c+1) as combination of W0(c) and W0(c+1)}.
\end{proof}

Based on the previous, we can write the vector of weights $\Wvec^\parameter{d}(x;a,b,c)$ as a solution to a matrix first order equation of Pearson type. More precisely, we have:

\begin{theorem}
\label{canonical matrix differential equation satisfied by the weights - theorem}
Let $a,b,c\in\R$ such that $a>-1$, $b>-1$ and $c>\max\{0,a-b\}$ and $\Wvec^\parameter{d}(x;a,b,c)$ as in \eqref{Wvec}. Then
\begin{equation}
\label{canonical matrix differential equation satisfied by the weights d=0}
\DiffOp\left(x\,\Phi^\parameter{d}(x;a,b,c)\Wvec^\parameter{d}(x;a,b,c)\right)
+\Psi^\parameter{d}(x;a,b,c)\Wvec^\parameter{d}(x;a,b,c)=0,
\end{equation}
where
\begin{align*}
& \Phi^\parameter{0}
=\matrixtwobytwo{0}{\frac{c+b+2}{(a+1)(b+1)}}{\frac{c+b+1}{(a+1)(b+1)}}{0}
\ , \quad
\Phi^\parameter{1}(x)
=\matrixtwobytwo{\frac{c+b+2}{(a+1)(b+1)}}{0}{\eta(x+2c+b-a+1)}{-(c+b+1)\eta}, \\[0.3cm]
&
	\Psi^\parameter{0}(x)
=\matrixtwobytwo{\frac{(c+b+1)(c+b+2)}{(a+1)(b+1)}}{-\frac{(c+b+1)(c+b+2)}{(a+1)(b+1)}}
{\frac{c+b+1}{(a+1)(b+1)}\left(x+c-a-1\right)}{-\frac{c(c+b-a)}{(a+1)(b+1)}} , \\[0.3cm]
&	\Psi^\parameter{1}(x)
=\matrixtwobytwo{-\frac{(c+b+1)(c+b+2)}{(a+1)(b+1)}}{\frac{(c+b+1)(c+b+2)}{(a+1)(b+1)}}
{-(c+b+2)\eta\left(x+\xi\right)}{(c+b+1)(c+b+2)\eta} , 
\end{align*}
%
with $\dis\eta=\frac{(c+b+2)(c+b+3)}{(a+1)(b+1)(c+1)(c+b-a+1)}$
 and $\dis\xi=\frac{b^2-ab+2bc+2b+c^2+3c-a+1}{c+b+2}$.
\vspace*{0,1 cm}

Moreover,
\begin{equation}
\label{shift on the parameters}
x\,\Phi^\parameter{d}(x;a,b,c)\Wvec^\parameter{d}(x;a,b,c)=\Wvec^\parameter{1-d}(x;a+1,b+1,c+d).
\end{equation} 

\end{theorem}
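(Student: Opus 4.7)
The plan is to prove the two assertions of the theorem separately: first establish the parameter-shift identity \eqref{shift on the parameters}, then derive the matrix Pearson equation \eqref{canonical matrix differential equation satisfied by the weights d=0} by direct differentiation, invoking Lemma \ref{initial matrix differential relations between the weights}.

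For the shift identity, I would work case by case in $d\in\{0,1\}$. Using the definition \eqref{modified Tricomi weight definition} one verifies the elementary identity $\W(x;a+1,b+1,c')=\tfrac{(c'+b+1)x}{(a+1)(b+1)}\W(x;a,b,c')$ for any admissible $c'$, which follows simply because the ratio of the corresponding Gamma prefactors is $(c'+b+1)/[(a+1)(b+1)]$ and the Tricomi factor $\KummerU{c'}{a-b+1}$ is unchanged when $(a,b)$ are shifted equally. This handles the case $d=0$ (both components) and the first component of $d=1$. For the second component when $d=1$, I need to show that $x\eta(x+2c+b-a+1)\W(x;a,b,c+1)-x(c+b+1)\eta\,\W(x;a,b,c)$ reduces to $\W(x;a+1,b+1,c+2)$. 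This is where the choice of $\eta$ is forced: applying the three-term contiguous relation \cite[Eq.~13.3.7]{DLMF} for $\KummerU{c+2}{a-b+1}$ expresses $\W(x;a+1,b+1,c+2)$ as exactly that linear combination, and matching prefactors gives $\eta=\frac{(c+b+2)(c+b+3)}{(a+1)(b+1)(c+1)(c+b-a+1)}$.

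For the differential equation, the key observation is that $\Phi^\parameter{0}$ does not depend on $x$, so the Leibniz rule yields $\DiffOp\left(x\,\Phi^\parameter{0}\Wvec^\parameter{0}\right)=\Phi^\parameter{0}\DiffOp\left(x\Wvec^\parameter{0}\right)$. Plugging in \eqref{matrix differential relation between the weights d=0} from Lemma \ref{initial matrix differential relations between the weights} reduces the claim to verifying the purely algebraic identity $\Psi^\parameter{0}=\Phi^\parameter{0}\,M_0$, where $M_0$ is the $2\times 2$ matrix appearing on the right of \eqref{matrix differential relation between the weights d=0}. This is a routine $2\times 2$ matrix multiplication.

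The case $d=1$ is analogous but requires one more term, since now $\Phi^\parameter{1}$ depends on $x$ through its $(2,1)$-entry. Writing $\DiffOp\left(x\,\Phi^\parameter{1}\Wvec^\parameter{1}\right)=x\left(\Phi^\parameter{1}\right)'\Wvec^\parameter{1}+\Phi^\parameter{1}\DiffOp\left(x\,\Wvec^\parameter{1}\right)$ and using \eqref{matrix differential relation between the weights d=1}, the equality reduces to checking $\Psi^\parameter{1}=\Phi^\parameter{1}M_1-x\left(\Phi^\parameter{1}\right)'$, where $M_1$ is the matrix on the right of \eqref{matrix differential relation between the weights d=1} and $\left(\Phi^\parameter{1}\right)'$ has a single nonzero entry $\eta$ in position $(2,1)$. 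The entries $(1,1),(1,2),(2,2)$ fall out immediately from the matrix product. The main obstacle, and the only nontrivial calculation, is the $(2,1)$-entry: one needs to recognize that $(c+b+1)(2c+b-a+1)-c(c+b-a)=b^2-ab+2bc+2b+c^2+3c-a+1=\xi(c+b+2)$, and after absorbing the extra $-x\eta$ coming from $-x(\Phi^\parameter{1})'$ the combined coefficient of $x$ becomes $-(c+b+2)\eta$, giving the claimed $-(c+b+2)\eta(x+\xi)$. This exact cancellation is what selects the specific value of $\xi$ in the statement.
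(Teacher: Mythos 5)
Your proposal is correct and follows essentially the same route as the paper: both reduce \eqref{canonical matrix differential equation satisfied by the weights d=0} to the algebraic identity $\Psi^\parameter{d}=\Phi^\parameter{d}\Omega^\parameter{d}-x\,\DiffOp\bigl(\Phi^\parameter{d}\bigr)$ via Lemma \ref{initial matrix differential relations between the weights} and the product rule, and both verify \eqref{shift on the parameters} componentwise using the Gamma-prefactor bookkeeping together with the contiguous relation \cite[Eq.~13.3.7]{DLMF} for the $\V_1^\parameter{1}$ entry. Your explicit intermediate identity $\W(x;a+1,b+1,c')=\tfrac{(c'+b+1)x}{(a+1)(b+1)}\W(x;a,b,c')$ is a tidy packaging of what the paper does implicitly, and your computation of $\eta$ and $\xi$ matches the stated values.
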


\begin{proof}
Let $\dis\Wvec^\parameter{d}(x)=\Wvec^\parameter{d}(x;a,b,c)$, $d\in\{0,1\}$
and 
\[\dis\Omega^\parameter{0}(x)=\matrixtwobytwo{x+c-a-1}{-\frac{c(c+b-a)}{c+b+1}}{c+b+1}{-(c+b+1)}
\quad\text{
and}
\quad \dis\Omega^\parameter{1}(x)=\matrixtwobytwo{-(c+b+1)}{c+b+1}{-\frac{c(c+b-a)}{c+b+1}}{x+c-a-1}.\]  
The equations \eqref{matrix differential relation between the weights d=0} and \eqref{matrix differential relation between the weights d=1} in  Lemma \ref{initial matrix differential relations between the weights} read as follows
\[
	\DiffOp\left(x\Wvec^\parameter{d}(x;a,b,c)\right)
=- \Omega^\parameter{d}(x)\Wvec^\parameter{d}(x;a,b,c) .
\]
We multiply the latter by $\Phi^\parameter{d}(x)$ and we obtain 
\[ 
	\DiffOp\left(x\ \Phi^\parameter{d}(x)\ \Wvec^\parameter{d}(x;a,b,c)\right)
=\left(x\,\DiffOp\left(\Phi^\parameter{d}(x)\right)-\Phi^\parameter{d}(x)\Omega^\parameter{d}(x)\right)\Wvec^\parameter{d}(x) , 
\]
which corresponds to \eqref{canonical matrix differential equation satisfied by the weights d=0}, after observing that 
\[\dis\Psi^\parameter{0}(x)=\Phi^\parameter{0}\Omega^\parameter{0}(x)
\quad \text{ and }\quad 
\dis\Psi^\parameter{1}(x)=\Phi^\parameter{1}(x)\Omega^\parameter{1}(x)-x\,\DiffOp\left(\Phi^\parameter{1}(x)\right), \]
or, equivalently, that 
\[
	\dis\Psi^\parameter{d}(x)=\Phi^\parameter{d}(x)\Omega^\parameter{d}(x)-x\,\DiffOp\left(\Phi^\parameter{d}(x)\right).
\]
%
%
%

Now, let 
\[ \dis\twovector{\V_0^\parameter{d}(x)}{\V_1^\parameter{d}(x)}
=x\,\Phi^\parameter{d}(x)\twovector{\W(x;a,b,c+d)}{\W(x;a,b,c+1-d)}
	\quad \text{with}\quad d\in\{0,1\}.
\]
In order to prove \eqref{shift on the parameters}, we need to check that $\dis\twovector{\V_0^\parameter{d}(x)}{\V_1^\parameter{d}(x)}=\twovector{\W(x;a+1,b+1,c+1)}{\W(x;a+1,b+1,c+2d)}$. Indeed, we have 
\[\dis\V_0^\parameter{1}(x)=\V_0^\parameter{0}(x)
=\frac{\Gamma(c+b+3)}{\Gamma(a+2)\Gamma(b+2)}\e^{-x}x^{a+1}\KummerU{c+1}{a-b+1}
=\W(x;a+1,b+1,c+1),\]
as well as 
\[\dis\V_1^\parameter{0}(x)
=\frac{\Gamma(c+b+2)}{\Gamma(a+2)\Gamma(b+2)}\e^{-x}x^{a+1}\KummerU{c}{a-b+1}
=\W(x;a+1,b+1,c).\]
Besides, 
\[\dis\V_1^\parameter{1}(x)
=\frac{\Gamma(c+b+4)}{\Gamma(a+2)\Gamma(b+2)}\,\e^{-x}x^{a+1}\,
\frac{(x+2c+b-a+1)\KummerU{c+1}{a-b+1}-\KummerU{c}{a-b+1}}{(c+1)(c+b-a+1)},\]
and, since $\dis(x+2c+b-a+1)\KummerU{c+1}{a-b+1}-\KummerU{c}{a-b+1}=(c+1)(c+b-a+1)\KummerU{c+2}{a-b+1}$ (see \cite[Eq.~13.3.7]{DLMF}), it can be written as
\[\dis\V_1^\parameter{1}(x)
=\frac{\Gamma(c+b+4)}{\Gamma(a+2)\Gamma(b+2)}\,\e^{-x}x^{a+1}\,\KummerU{c+2}{a-b+1}
=\W(x;a+1,b+1,c+2).\]
\end{proof}

\subsection{Differential properties of the multiple orthogonal polynomials}
\label{Differential properties of the multiple orthogonal polynomials}
The main result of this section is Theorem \ref{differentiation formulas for the multiple orthogonal polynomials - theorem}, where we present a differential relation for multiple orthogonal polynomials on the step line of type II   in \eqref{differentiation formula for type II polynomials} and of type I in \eqref{differentiation formula for type I polynomials}.
More precisely, we show that the differentiation with respect to the variable gives a shift on the parameters as well as on the index. 
Therefore, we can see these polynomials as part of the Hahn-classical family, since both type II and type I multiple orthogonal polynomials on the step line satisfy the Hahn-classical property. 

To derive this theorem, we first prove Propositions \ref{differentiation formula for type II polynomials - general result} and \ref{differentiation formula for type I polynomials - general result} that give us differential properties for type II and type I multiple orthogonal polynomials on the step line in more general contexts.
Proposition \ref{differentiation formula for type II polynomials - general result} is a consequence of the alternative characterisation of the Hahn-classical property for $2$-orthogonal polynomials (ie multiple orthogonal polynomials on the step line) derived by Douak and Maroni in \cite{DouakandMaroniClassiquesDeDimensionDeux} (see also \cite[Prop.~6.2]{MaroniSemiclassical}). Here, we present an alternative proof, restricting ourselves to the use of weight functions instead of linear functionals.  
Incidentally, evoking similar arguments,  Proposition \ref{differentiation formula for type I polynomials - general result} is an analogous result for type I polynomials, which we believe to be new. 

\begin{proposition}
\label{differentiation formula for type II polynomials - general result}
Let $\dis\overline{w}(x)=\twovector{w_0(x)}{w_1(x)}$ be a vector of weight functions satisfying a differential equation 
\begin{equation}
\label{canonical matrix differential equation satisfied by general weights type II}
\DiffOp\left(x\Phi(x)\overline{w}(x)\right)+\Psi(x)\overline{w}(x)=0,
\end{equation}
with
$\dis\Phi(x)=\matrixtwobytwo{\phi_{00}}{\phi_{01}}{\varphi(x)}{\phi_{11}}$
and \vspace*{0,1 cm}
$\dis\Psi(x)=\matrixtwobytwo{\eta_0}{\eta_1}{\psi(x)}{\xi}$,
for constants $\phi_{00}$, $\phi_{01}$, $\phi_{11}$, $\eta_0$, $\eta_1$ and $\xi$ and polynomials $\varphi$ and $\psi$ such that $\deg\varphi\leq 1$ and $\deg\psi=1$.
Suppose that all multi-indices on the step-line are normal with respect to both $\dis\overline{w}(x)$ and $\dis x\Phi(x)\overline{w}(x)$ and let $\dis\polyseq$ be the $2$-orthogonal polynomial sequence with respect to  $\dis\overline{w}(x)$.
Then $\polyseq[\frac{1}{n+1}\DiffOp\left(P_{n+1}(x)\right)]$ is $2$-orthogonal with respect to $\dis x\Phi(x)\overline{w}(x)$.
\end{proposition}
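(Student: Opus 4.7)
Set $\widetilde{w}(x)=x\Phi(x)\overline{w}(x)$. The Pearson-type relation \eqref{canonical matrix differential equation satisfied by general weights type II} reads $\widetilde{w}'(x)=-\Psi(x)\overline{w}(x)$, which is the only analytic input from the hypothesis I will need. Since $P_{n+1}$ is monic of degree $n+1$, the polynomial $\widehat{P}_n(x):=\frac{1}{n+1}P_{n+1}'(x)$ is monic of degree $n$; under the normality assumption for $\widetilde{w}$, it suffices to verify the $2$-orthogonality conditions \eqref{2-orthogonality conditions} against $\widetilde{w}_0$ and $\widetilde{w}_1$ to identify it with the $n$-th monic $2$-orthogonal polynomial of that system. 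So the core of the argument is to show
\[
\int x^k\,\widehat{P}_n(x)\,\widetilde{w}_j(x)\,\dx=0
\]
for $j=0,1$ and for $k$ in the appropriate range dictated by \eqref{2-orthogonality conditions} applied to $\widetilde{w}$ (namely $k\le\lceil n/2\rceil-1$ when $j=0$ and $k\le\lfloor n/2\rfloor-1$ when $j=1$).

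The key move is integration by parts. Assuming the boundary terms vanish (implicit in any Hahn-classical setting of this kind and automatically true for weights with rapid decay, e.g.\ those of Section~\ref{Multiple orthogonality}), I would write
\[
\int x^k \widehat{P}_n(x)\,\widetilde{w}_j(x)\,\dx=-\frac{1}{n+1}\int P_{n+1}(x)\Bigl(kx^{k-1}\widetilde{w}_j(x)+x^k\widetilde{w}_j'(x)\Bigr)\dx.
\]
Then, using $\widetilde{w}_j=x(\Phi_{j0}w_0+\Phi_{j1}w_1)$ and $\widetilde{w}_j'=-(\Psi_{j0}w_0+\Psi_{j1}w_1)$, the right-hand side becomes
\[
\frac{1}{n+1}\sum_{\ell=0}^{1}\int P_{n+1}(x)\,x^k\bigl(\Psi_{j\ell}(x)-k\,\Phi_{j\ell}(x)\bigr)w_\ell(x)\,\dx.
\]

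Now I invoke the structural assumption on $\Phi$ and $\Psi$: for $j=0$ the factors $\Psi_{0\ell}-k\Phi_{0\ell}$ are constants, so every integrand is a constant multiple of $x^k P_{n+1}(x)w_\ell(x)$; for $j=1$ the only nonconstant entries are $\varphi$ and $\psi$, both of degree at most $1$, so the integrand against $w_0$ is a polynomial in $x$ of degree at most $k+1$ times $P_{n+1}w_0$, while the integrand against $w_1$ remains of degree $k$ times $P_{n+1}w_1$. Invoking the $2$-orthogonality \eqref{2-orthogonality conditions} of $P_{n+1}$ with respect to $\overline{w}$ then reduces everything to checking degree inequalities.

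The last step is the bookkeeping: split into the cases $n+1=2m$ and $n+1=2m+1$, use $P_{n+1}\perp x^{k'}w_0$ for $k'\le m_0-1$ and $P_{n+1}\perp x^{k'}w_1$ for $k'\le m_1-1$ (with $(m_0,m_1)$ the step-line multi-index of length $n+1$), and verify that in every case the values of $k$ required for the new system, together with the degree shift of at most $1$ coming from $\varphi,\psi$ in the row $j=1$, stay within these bounds. This is exactly where the asymmetric structure of $\Phi$ and $\Psi$ (constants everywhere except in the bottom-left entry) is essential; I expect this degree-tracking to be the main—if only mildly tedious—obstacle. Once it is carried out, the orthogonality conditions defining $\widehat{P}_n$ as the monic $2$-orthogonal polynomial of degree $n$ with respect to $\widetilde{w}$ are all satisfied, and the conclusion follows from the normality hypothesis.
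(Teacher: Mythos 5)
Your proposal is correct and follows essentially the same route as the paper's proof: differentiate/integrate by parts against $x^k v_j(x)$ with $\overline{v}=x\Phi\overline{w}$, use the Pearson relation $\overline{v}{}'=-\Psi\overline{w}$ to rewrite everything as integrals of $x^k(\Psi_{j\ell}-k\Phi_{j\ell})P_{n+1}w_\ell$, and then conclude from the $2$-orthogonality of $P_{n+1}$ together with $\deg\varphi,\deg\psi\le1$. Your explicit mention of the vanishing boundary terms and of invoking normality to upgrade the zero conditions to full $2$-orthogonality are points the paper leaves implicit, but the argument is the same.
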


\begin{proof}
Let $\dis\overline{v}(x)=x\Phi(x)\overline{w}(x)=\twovector{v_0(x)}{v_1(x)}$.
Based on the assumption \eqref{canonical matrix differential equation satisfied by general weights type II}, we have  $\dis\DiffOp\left(\overline{v}(x)\right)=-\Psi(x)\overline{w}(x)$ so that 
\begin{align*}
\DiffOp\left(x^k\overline{v}(x)\right)=x^k\big(k\Phi(x)-\Psi(x)\big)\overline{w}(x)
\end{align*}
holds for any $k\in\N$, and it amounts to the same as 
\begin{align*}
\DiffOp\left(x^kv_0(x)\right)
=x^k\Big(\left(k\phi_{00}-\eta_0\right)w_0(x)+\left(k\phi_{01}-\eta_1\right)w_1(x)\Big)
\end{align*}
and
\begin{align*}
\DiffOp\left(x^kv_1(x)\right)
=x^k\Big(\left(k\varphi(x)-\psi(x)\right)w_0(x)+\left(k\phi_{11}-\xi\right)w_1(x)\Big).
\end{align*}
Performing integration by parts and then using the latter identities, we respectively have
\begin{align*}
\int_{0}^{\infty}x^kP'_{n+1}(x)v_0(x)\dx
=\int_{0}^{\infty}\left(\eta_0-k\phi_{00}\right)x^kP_{n+1}(x)w_0(x)\dx
+\int_{0}^{\infty}\left(\eta_1-k\phi_{01}\right)x^kP_{n+1}(x)w_1(x)\dx
\end{align*}
and
\begin{align*}
\int_{0}^{\infty}x^kP'_{n+1}(x)v_1(x)\dx
=\int_{0}^{\infty}\left(\psi(x)-k\varphi(x)\right)x^kP_{n+1}(x)w_0(x)\dx
+\int_{0}^{\infty}\left(\xi-k\phi_{11}\right)x^kP_{n+1}(x)w_1(x)\dx, 
\end{align*}
which are valid for any $k\in\N$. 

Arguing now with the  $2$-orthogonality of $\dis\polyseq[P_n(x)]$ with respect to $\dis\overline{w}(x)$, combined with the degrees of $\varphi$ and $\psi$ not being greater than $1$, we conclude that the polynomial sequence $\dis\polyseq[\frac{1}{n+1}P'_{n+1}(x)]$ is necessarily $2$-orthogonal with respect to $\overline{v}(x)$.
\end{proof}

A similar result can be deduced regarding multiple orthogonality of type I. 

\begin{proposition}
\label{differentiation formula for type I polynomials - general result}
Let $\dis\overline{w}(x)$ be a vector of weight functions satisfying \eqref{canonical matrix differential equation satisfied by general weights type II} with $\Phi(x)$ and $\Psi(x)$ being two polynomial matrices as described in Proposition \ref{differentiation formula for type II polynomials - general result}. 
Suppose that all multi-indices on the step-line are normal with respect to both $\dis\ \overline{w}(x)$ and $\dis\  x\,\Phi(x)\overline{w}(x)$ and let $ Q_n(x)$ be the type I function for the index of length $n$ on the step-line with respect to $\  x\,\Phi(x)\overline{w}(x)$.
\vspace*{0,1 cm}
Then $  \left( -\frac{1}{n}\DiffOp Q_n(x)\right)$ is the type I function for the index of length $n+1$ on the step line with respect to $\dis\overline{w}(x)$.
\end{proposition}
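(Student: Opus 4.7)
The plan is to mirror closely the argument of Proposition \ref{differentiation formula for type II polynomials - general result}, but in reverse: there one differentiates inside an integral; here one differentiates the type I function itself and then integrates by parts. Concretely, write $\overline{v}(x)=x\Phi(x)\overline{w}(x)=\twovector{v_0(x)}{v_1(x)}$, and let $Q_n(x)=A_n(x)v_0(x)+B_n(x)v_1(x)$ with $\deg A_n$ and $\deg B_n$ prescribed by the step-line index of length $n$ with respect to $\overline{v}$.

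First I would differentiate $Q_n$ and use the matrix Pearson equation, which in the form $\DiffOp\overline{v}(x)=-\Psi(x)\overline{w}(x)$ gives $v_0'=-\eta_0w_0-\eta_1w_1$ and $v_1'=-\psi(x)w_0-\xi w_1$, while the expressions $v_0=x\phi_{00}w_0+x\phi_{01}w_1$ and $v_1=x\varphi(x)w_0+x\phi_{11}w_1$ come from $\overline{v}=x\Phi\overline{w}$. Substituting into $Q_n'=A_n'v_0+A_nv_0'+B_n'v_1+B_nv_1'$ and collecting coefficients of $w_0$ and $w_1$ produces an expression $-\tfrac{1}{n}Q_n'(x)=\widetilde{A}_{n+1}(x)w_0(x)+\widetilde{B}_{n+1}(x)w_1(x)$, with
\begin{align*}
\widetilde{A}_{n+1}&=-\tfrac{1}{n}\bigl(x\phi_{00}A_n'-\eta_0 A_n+x\varphi(x)B_n'-\psi(x)B_n\bigr),\\
\widetilde{B}_{n+1}&=-\tfrac{1}{n}\bigl(x\phi_{01}A_n'-\eta_1 A_n+x\phi_{11}B_n'-\xi B_n\bigr).
\end{align*}
I would then check, case by case on the parity of $n$, that $\deg\widetilde{A}_{n+1}$ and $\deg\widetilde{B}_{n+1}$ meet the degree requirements imposed by the step-line index of length $n+1$. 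The key observation here is that $\deg\varphi\le 1$ and $\deg\psi=1$, which is precisely what prevents the contributions from $x\varphi B_n'$ and $\psi B_n$ from exceeding the allowed degrees.

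Next, to verify the orthogonality and normalisation of $-\tfrac{1}{n}Q_n'$ with respect to $\overline{w}$, I would integrate by parts:
\begin{equation*}
\int x^k\left(-\tfrac{1}{n}\right)Q_n'(x)\dx
=\tfrac{k}{n}\int x^{k-1}Q_n(x)\dx
\end{equation*}
for $k\ge 1$ (and the integral is zero for $k=0$), provided the boundary terms $[x^kQ_n(x)]$ vanish at the endpoints; this has to be argued from the asymptotic decay of $\overline{v}$ at infinity and its behaviour at $0$, which in our working setting (modified Tricomi weights) is guaranteed by the constraints on $a,b,c$. Using the type I conditions for $Q_n$ on the step-line of length $n$ with respect to $\overline{v}$ then gives $0$ for $1\le k\le n-1$ and $-n$ for $k=n$, which after the factor $-\tfrac{1}{n}$ yields exactly the normalisation $\int x^{n}(-\tfrac{1}{n}Q_n')\dx=1$ and vanishing for $0\le k\le n-1$. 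These are the type I orthogonality and normalisation conditions for an index of length $n+1$ with respect to $\overline{w}$.

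Finally, invoking the assumed normality of all step-line indices with respect to $\overline{w}$, the type I function for length $n+1$ is unique, so $-\tfrac{1}{n}Q_n'$ must be it. The main obstacle I anticipate is not any conceptual subtlety but the careful bookkeeping in the degree check for $\widetilde{A}_{n+1},\widetilde{B}_{n+1}$ across the two step-line configurations (when $n$ is even versus odd); the boundary-term issue in the integration by parts is a secondary technicality that the hypotheses on $\overline{w}$ (and ultimately those of Theorem \ref{Nikishin system}) handle transparently.
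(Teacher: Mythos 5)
Your proposal is correct and follows essentially the same route as the paper's proof: decompose $Q_n=C_nv_0+D_nv_1$ with respect to $x\Phi(x)\overline{w}(x)$, differentiate and use the Pearson-type relations to re-express $-\tfrac{1}{n}Q_n'$ as a combination of $w_0,w_1$ with the required degree bounds, and obtain the orthogonality and normalisation conditions by integration by parts. The only cosmetic differences are the order of the two steps and your (reasonable) explicit flagging of the vanishing boundary terms, which the paper absorbs into the definition of $Q_n$.
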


\begin{proof} 
By a simple integration and then by definition of $\dis Q_n(x)$ we have 
\begin{align*}
\int_{0}^{\infty}Q'_n(x)\dx=Q_n(x)\Big|_{0}^{\infty}=0,
\end{align*}
whilst, after performing integration by parts to then argue with the definition of  $\dis Q_n(x)$, we obtain 
\begin{align*}
\int_{0}^{\infty}x^{k+1}Q'_n(x)\dx
=-(k+1)\int_{0}^{\infty}x^kQ_n(x)\dx
=\begin{cases}
 0, &\text{ if } 0\leq k\leq n-2,\\
-n, &\text{ if } k=n-1.
\end{cases}
\end{align*}
Hence, it follows  
\begin{align*}
\int_{0}^{\infty}-x^j\,\frac{Q'_n(x)}{n}\dx
=\begin{cases}
0, & \text{ if }0\leq j\leq n-1,\\
1, & \text{ if }j=n.
\end{cases}
\end{align*}

Therefore it is sufficient to show that there are polynomials $A_{n+1}(x)$ and $B_{n+1}(x)$ such that 
\begin{equation}
\label{linear combination type I}
-\frac{1}{n}Q'_n(x)=A_{n+1}(x)w_0(x)+B_{n+1}(x)w_1(x)
\quad \text{for all}\quad n\in\N,
\end{equation}
and 
\begin{equation}
\label{conditions on the degrees of the type I polynomials}
\deg\left(A_{2m}(x)\right),\deg\left(B_{2m}(x)\right),\deg\left(B_{2m+1}(x)\right)\leq m-1
\text{ and }
\deg\left(A_{2m+1}(x)\right)\leq m,
\quad \text{for any}\quad m\in\N,
\end{equation}
because this implies that $\left(A_{n+1}(x),B_{n+1}(x)\right)$ is the vector of type I multiple orthogonal polynomials for the index of length $n+1$ on the step-line with respect to $\dis\overline{w}(x)$. Consequently, this means that $ \left( -\frac{1}{n}\DiffOp Q_n(x)\right)$ is the type I function for the index of length $n+1$ on the step line with respect to $\dis\overline{w}(x)$.

Consider the vector of weights $\dis\overline{v}(x)=x\Phi(x)\overline{w}(x)$ and let $\dis\overline{v}(x)=\left[\begin{array}{c} v_0(x) \\ v_1(x) \end{array}\right]$. 
Then, 
\begin{subequations}
\begin{align}
\label{relations between weights 1}
v_0(x)=x\left(\phi_{00}\,w_0(x)+\phi_{01}\,w_1(x)\right)
\text{ and }
v_1(x)=x\left(\varphi(x)w_0(x)+\phi_{11}\,w_1(x)\right).
\end{align}
By virtue of equation \eqref{canonical matrix differential equation satisfied by general weights type II}, we have $\dis\DiffOp\left(\overline{v}(x)\right)=-\Psi(x)\overline{w}(x)$, which means 
\begin{align}
\label{relations between weights 2}
v'_0(x)=-\eta_0 w_0(x)-\eta_1 w_1(x)
\text{ and }
v'_1(x)=-\psi(x) w_0(x)-\xi w_1(x).
\end{align}
\end{subequations}

For any $n\in\N$, let $\dis\left(C_n(x),D_n(x)\right)$ be the vector of type I multiple orthogonal polynomials for the index of length $n$ on the step-line with respect to $\dis\overline{v}(x)$.
Then, by definition of the type I function,
\begin{align*}
Q_n(x)=C_n(x)v_0(x)+D_n(x)v_1(x),
\end{align*}
with
\begin{equation*}
\deg\left(C_{2m}(x)\right),\deg\left(D_{2m}(x)\right),\deg\left(D_{2m+1}(x)\right)\leq m-1
\text{ and }
\deg\left(C_{2m+1}(x)\right)\leq m,
\text{ for any }m\in\N. 
\end{equation*}

Differentiating the expression for $Q_n(x)$, we obtain
\begin{align*}
Q'_n(x)=C'_n(x)v_0(x)+A_n(x)v'_0(x)+D'_n(x)v_1(x)+D_n(x)v'_1(x)
\end{align*}
hence, using \eqref{relations between weights 1} and \eqref{relations between weights 2}, we derive
\begin{align*}
Q'_n(x)&
=\Big(\phi_{00}x\,C'_n(x)-\eta_0 C_n(x)+x\varphi(x)D'_n(x)-\psi(x)D_n(x)\Big)w_0(x) \\&
+\Big(\phi_{01}x\,C'_n(x)-\eta_1 C_n(x)+\phi_{11}x\,D'_n(x)-\xi D_n(x)\Big)w_1(x).
\end{align*}
The uniqueness of type I multiple orthogonal polynomials leads to \eqref{linear combination type I} where 
\begin{align*}
A_{n+1}(x)=-\frac{1}{n}\Big(\phi_{00}x\,C'_n(x)-\eta_0 C_n(x)+x\varphi(x)D'_n(x)-\psi(x)D_n(x)\Big)
\end{align*}
and 
\begin{align*}
B_{n+1}(x)=-\frac{1}{n}\Big(\phi_{01}x\,C'_n(x)-\eta_1 C_n(x)+\phi_{11}x\,D'_n(x)-\xi D_n(x)\Big).
\end{align*}
Finally, the conditions on the degrees of $C_n(x)$ and $D_n(x)$, combined with the degrees of $\Phi$ and $\Psi$ not being greater than $1$, imply that \eqref{conditions on the degrees of the type I polynomials} holds.
\end{proof}

\begin{remark}\label{rem: type I polys general} As a straightforward consequence of Proposition \ref{differentiation formula for type I polynomials - general result},
 the type I multiple orthogonal polynomials $(A_n(x) , B_n(x))$ and $(C_n(x), D_n(x))$ for $\dis\ \overline{w}(x)$ and for $\dis\  x\,\Phi(x)\overline{w}(x)$, respectively, are related by 
 \[
\left(\begin{array}{c}
	A_{n+1}(x) \\
	B_{n+1}(x)
\end{array}\right)
= x \Phi(x)^{t}  \left(\begin{array}{c}
	C_{n}^{\ \prime}(x) \\
	D_{n}^{\ \prime}(x)
\end{array}\right)
- \Psi(x)^{t} \left(\begin{array}{c}
	C_{n}(x) \\
	D_{n}(x)
\end{array}\right),\quad \text{for all} \quad n\geq 0, 
 \]
 where $\Phi^{t}$ and $\Psi^{t}$ are the transpose of the matrices given in Proposition \ref{differentiation formula for type II polynomials - general result}. 

\end{remark}

Combining Propositions \ref{differentiation formula for type II polynomials - general result} and \ref{differentiation formula for type I polynomials - general result} with Theorem \ref{canonical matrix differential equation satisfied by the weights - theorem}, we deduce differential and difference properties for type I and type II multiple polynomials, which are described in the following result.

\begin{theorem}
\label{differentiation formulas for the multiple orthogonal polynomials - theorem}
Let $a,b,c\in\R$ such that $a>-1$, $b>-1$ and $c>\max\{0,a-b\}$ and $d\in\{0,1\}$.
 \linebreak If $\ \dis\polyseq[P_n^\parameter{d}(x;a,b,c)]$ is the monic $2$-orthogonal polynomial sequence and $\dis Q_n^\parameter{d}(x;a,b,c)$, with $n\in\N$, the type I function for the index of length $n$ on the step-line with respect to $\dis\Wvec^\parameter{d}(x;a,b,c)$, 
then
\begin{align}
\label{differentiation formula for type II polynomials}
\DiffOp\left(P_{n+1}^\parameter{d}(x;a,b,c)\right)=(n+1)P_n^\parameter{1-d}(x;a+1,b+1,c+d),
\end{align}
and 
\begin{align}
\label{differentiation formula for type I polynomials}
\DiffOp\left(Q_n^\parameter{1-d}(x;a+1,b+1,c+d)\right)
=-n\,Q_{n+1}^\parameter{d}(x;a,b,c). 
\end{align}
\end{theorem}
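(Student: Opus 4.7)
The plan is to feed Theorem \ref{canonical matrix differential equation satisfied by the weights - theorem}, which casts $\Wvec^\parameter{d}(x;a,b,c)$ as a solution of a matrix Pearson-type equation, together with the parameter-shift identity \eqref{shift on the parameters}, into the two general differentiation principles established in Propositions \ref{differentiation formula for type II polynomials - general result} and \ref{differentiation formula for type I polynomials - general result}. A direct inspection of the explicit matrices $\Phi^\parameter{d}$ and $\Psi^\parameter{d}$ displayed in Theorem \ref{canonical matrix differential equation satisfied by the weights - theorem} confirms the degree bounds ($\deg\varphi\leq 1$, $\deg\psi=1$) demanded by the two propositions, so their hypotheses on the shape of $\Phi$ and $\Psi$ are automatic. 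The normality requirement is secured by Theorem \ref{Nikishin system} applied twice: once with the original parameters $(a,b,c)$, and once with the shifted parameters $(a+1,b+1,c+d)$, which still satisfy $a+1>-1$, $b+1>-1$, $c+d>\max\{0,a-b\}$.

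For \eqref{differentiation formula for type II polynomials}, I would take $\overline{w}=\Wvec^\parameter{d}(x;a,b,c)$ in Proposition \ref{differentiation formula for type II polynomials - general result}. This yields that $\left\{\frac{1}{n+1}\DiffOp P_{n+1}^\parameter{d}(x;a,b,c)\right\}_{n\in\N}$ is the monic $2$-orthogonal polynomial sequence associated with $x\,\Phi^\parameter{d}(x;a,b,c)\Wvec^\parameter{d}(x;a,b,c)$. The shift relation \eqref{shift on the parameters} identifies this vector of weights with $\Wvec^\parameter{1-d}(x;a+1,b+1,c+d)$, whose unique monic $2$-OPS on the step line -- unique by the Nikishin property -- is precisely $P_n^\parameter{1-d}(x;a+1,b+1,c+d)$. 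Multiplying through by $n+1$ yields \eqref{differentiation formula for type II polynomials}; note that the matching of leading coefficients is free, since differentiating a monic polynomial of degree $n+1$ automatically produces one of leading coefficient $n+1$.

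The derivation of \eqref{differentiation formula for type I polynomials} is symmetric. Taking $\overline{w}=\Wvec^\parameter{d}(x;a,b,c)$ in Proposition \ref{differentiation formula for type I polynomials - general result}, the vector $x\,\Phi^\parameter{d}(x;a,b,c)\Wvec^\parameter{d}(x;a,b,c)$ is, by \eqref{shift on the parameters}, equal to $\Wvec^\parameter{1-d}(x;a+1,b+1,c+d)$. Its type I function of length $n$ on the step line is $Q_n^\parameter{1-d}(x;a+1,b+1,c+d)$, so Proposition \ref{differentiation formula for type I polynomials - general result} gives $-\frac{1}{n}\DiffOp Q_n^\parameter{1-d}(x;a+1,b+1,c+d)$ as the type I function of length $n+1$ with respect to $\Wvec^\parameter{d}(x;a,b,c)$, namely $Q_{n+1}^\parameter{d}(x;a,b,c)$, which after clearing denominators produces \eqref{differentiation formula for type I polynomials}.

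I expect the main obstacle to be not computational but bookkeeping: the flag $d\in\{0,1\}$ toggles to $1-d$ under differentiation at the same time as $(a,b,c)$ shifts to $(a+1,b+1,c+d)$, so one must carefully unwind the definition \eqref{Wvec} of $\Wvec^\parameter{d}$ to check that the two components line up correctly in both cases $d=0$ and $d=1$. Since Theorem \ref{canonical matrix differential equation satisfied by the weights - theorem} already tracks this toggling explicitly through the formulas for $\Phi^\parameter{d}$ and $\Psi^\parameter{d}$, no additional computation is required beyond the two invocations of the general propositions.
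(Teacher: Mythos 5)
Your proposal is correct and follows essentially the same route as the paper: invoke Proposition \ref{differentiation formula for type II polynomials - general result} and Proposition \ref{differentiation formula for type I polynomials - general result} with the vector of weights $\Wvec^\parameter{d}(x;a,b,c)$, and then use the parameter-shift identity \eqref{shift on the parameters} from Theorem \ref{canonical matrix differential equation satisfied by the weights - theorem} to recognise $x\,\Phi^\parameter{d}\Wvec^\parameter{d}$ as $\Wvec^\parameter{1-d}(x;a+1,b+1,c+d)$. The extra details you supply (normality via the Nikishin property for both parameter sets, and the matching of leading coefficients) are left implicit in the paper but are correct and harmless.
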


\begin{proof}
Let $\dis\Phi^\parameter{d}(x;a,b,c)$  be defined as in Theorem \ref{canonical matrix differential equation satisfied by the weights - theorem}.

Proposition \ref{differentiation formula for type II polynomials - general result} ensures that  $\polyseq[\frac{1}{n+1}\DiffOp\left(P_{n+1}^\parameter{d}(x;a,b,c)\right)]$ is $2$-orthogonal with respect to the vector of weights $\dis x\Phi^\parameter{d}(x;a,b,c)\Wvec^\parameter{d}(x;a,b,c)$.
Besides, from Proposition \ref{differentiation formula for type I polynomials - general result}, we know that, if $\tilde{Q}_n^\parameter{d}(x;a,b,c)$ is the type I function for the index of length $n$ on the step-line with respect to $\dis x\Phi^\parameter{d}(x;a,b,c)\Wvec^\parameter{d}(x;a,b,c)$, then $\dis -\frac{1}{n}\DiffOp\left(\tilde{Q}_n^\parameter{d}(x;a,b,c)\right)$ is the type I function for the index of length $n+1$ on the step line with respect to the vector of weights $\dis\Wvec^\parameter{d}(x;a,b,c)$.

By virtue of \eqref{shift on the parameters} 
we conclude that \eqref{differentiation formula for type II polynomials} holds, and also that $\dis\tilde{Q}_n^\parameter{d}(x;a,b,c)=Q_n^\parameter{1-d}(x;a+1,b+1,c+d)$ is valid for any $n\in\N$. Hence, \eqref{differentiation formula for type I polynomials} also holds.
\end{proof}

The differentiable properties described in Theorem \ref{differentiation formulas for the multiple orthogonal polynomials - theorem} are the main pillars for further characterisation of the multiple orthogonal polynomials under analysis. These intrinsic properties resemble those found within the context of the very classical standard orthogonal polynomials. 

\subsection{Type I multiple orthogonal polynomials}
\label{Rodrigues-type formula for type I multiple orthogonal polynomials}

Let us revisit Proposition \ref{differentiation formula for type I polynomials - general result} and Remark  \ref{rem: type I polys general} for the case where the vector of  weights $w$ is replaced by  $\dis\Wvec^\parameter{d}(x;a,b,c)$ defined by \eqref{Wvec}. We recall \eqref{shift on the parameters} in Theorem \ref{canonical matrix differential equation satisfied by the weights - theorem} and \eqref{differentiation formula for type I polynomials} in Theorem \ref{differentiation formulas for the multiple orthogonal polynomials - theorem} to conclude that if $(A_n^\parameter{d}(x;a,b,c),B_n^\parameter{d}(x;a,b,c))$ is the vector of type I multiple orthogonal polynomials on the step line for  $\dis\Wvec^\parameter{d}(x;a,b,c)$, then 
 \[\begin{multlined}
\left(\begin{array}{c}
	A_{n+1}^\parameter{d}(x;a,b,c) \\[0.2cm]
	B_{n+1}^\parameter{d}(x;a,b,c)
\end{array}\right) 
= \left( x \Phi(x)^{t}\DiffOpFunction{} 
- \Psi(x)^{t} 
 \right) \left(\begin{array}{c}
	A_{n}^\parameter{1-d}(x;a+1,b+1,c+d) \\[0.2cm]
	B_{n}^\parameter{1-d}(x;a+1,b+1,c+d)
\end{array}\right),
\quad \text{for all} \quad n\geq 0.
 \end{multlined}
 \]

The type I multiple orthogonal functions on the step line can be generated by concatenated differentiation of the weight function or, in other words, via a Rodrigues-type formula.

\begin{theorem}
\label{Rodrigues formula for the type I polynomials - theorem}
Let $a,b,c\in\R$ such that $a>-1$, $b>-1$ and $c>\max\{0,a-b\}$ and, for each $n\geq 1$, let $Q_n^\parameter{d}(x;a,b,c)$, $d\in\{0,1\}$, be the type I function for the index of length $n$ on the step line with respect to $\dis\Wvec^\parameter{d}(x;a,b,c)$.
Then
\begin{equation}
\label{Rodrigues formula for the type I polynomials}
Q_n^\parameter{d}(x;a,b,c)
=\frac{(-1)^{n-1}}{(n-1)!}\DiffOpHigherOrder{n-1}
\left(\W\left(x;a+n-1,b+n-1,c+\floor{\frac{n+d}{2}}\right)\right)
\end{equation}
\end{theorem}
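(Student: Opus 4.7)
The plan is to prove the Rodrigues formula by induction on $n$, using the differentiation formula \eqref{differentiation formula for type I polynomials} of Theorem \ref{differentiation formulas for the multiple orthogonal polynomials - theorem} as the inductive step.

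For the base case $n=1$, the unique index of length $1$ on the (lower) step line is $(1,0)$, which forces $\deg A_{(1,0)}\leq 0$ and $B_{(1,0)}\equiv 0$. The normalisation in \eqref{orthogonality conditions type I function} then reduces to $A_{(1,0)}\,\int_0^{\infty}\W(x;a,b,c+d)\dx=1$, and since $\W(\,\cdot\,;a,b,c+d)$ is a probability density, we obtain $A_{(1,0)}=1$ and hence $Q_1^\parameter{d}(x;a,b,c)=\W(x;a,b,c+d)$. On the other hand, the right-hand side of \eqref{Rodrigues formula for the type I polynomials} for $n=1$ is $\W\bigl(x;a,b,c+\floor{(1+d)/2}\bigr)$, and a direct case check gives $\floor{(1+d)/2}=d$ for $d\in\{0,1\}$, matching exactly.

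For the inductive step, assume \eqref{Rodrigues formula for the type I polynomials} holds for some $n\geq 1$ and all admissible parameters, and apply it to $Q_n^\parameter{1-d}(x;a+1,b+1,c+d)$:
\begin{align*}
Q_n^\parameter{1-d}(x;a+1,b+1,c+d)
=\frac{(-1)^{n-1}}{(n-1)!}\DiffOpHigherOrder{n-1}
\left(\W\left(x;a+n,b+n,c+d+\floor{\tfrac{n+1-d}{2}}\right)\right).
\end{align*}
Differentiating this identity in $x$ and invoking \eqref{differentiation formula for type I polynomials}, namely $\DiffOp Q_n^\parameter{1-d}(x;a+1,b+1,c+d)=-n\,Q_{n+1}^\parameter{d}(x;a,b,c)$, yields
\begin{align*}
Q_{n+1}^\parameter{d}(x;a,b,c)
=\frac{(-1)^{n}}{n!}\DiffOpHigherOrder{n}
\left(\W\left(x;a+n,b+n,c+d+\floor{\tfrac{n+1-d}{2}}\right)\right).
\end{align*}
To close the induction it then suffices to verify the floor identity
\[
d+\floor{\tfrac{n+1-d}{2}}=\floor{\tfrac{n+1+d}{2}}, \qquad d\in\{0,1\},
\]
which is trivial for $d=0$ and follows for $d=1$ from $1+\floor{n/2}=\floor{(n+2)/2}$.

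The argument is essentially bookkeeping: the analytic content is entirely encoded in Theorem \ref{differentiation formulas for the multiple orthogonal polynomials - theorem}, which itself rests on the Pearson-type matrix equation of Theorem \ref{canonical matrix differential equation satisfied by the weights - theorem}. The only mildly delicate point is that the parameter shift acts asymmetrically on the two components of $\Wvec^\parameter{d}$, producing the parity-sensitive floor $\floor{(n+d)/2}$ in the exponent; the base case $n=1$ is precisely where one checks this asymmetry is captured correctly, and the induction then propagates it automatically via $d\mapsto 1-d$.
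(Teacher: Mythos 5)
Your proof is correct and follows essentially the same route as the paper's: induction anchored at $n=1$ with the inductive step supplied by \eqref{differentiation formula for type I polynomials} and the floor identity $d+\floor{\tfrac{n+1-d}{2}}=\floor{\tfrac{n+1+d}{2}}$. The only difference is that the paper also verifies the case $n=2$ by a direct computation of $\DiffOp\bigl(\W(x;a+1,b+1,c+1)\bigr)$ and the two normalisation integrals before starting the induction at $n\geq 2$, a check that is redundant in your argument since you invoke \eqref{differentiation formula for type I polynomials} already at $n=1$.
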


\begin{proof} We proceed by induction. 
For $n=1$, the relation \eqref{Rodrigues formula for the type I polynomials} trivially holds, because it reads as $\dis Q_1^\parameter{d}(x;a,b,c)=\W(x;a,b,c+d)$ and, recalling \eqref{moments}, we have 
\begin{equation*}
\int_{0}^{\infty}\W(x;a,b,c+d)\dx=1. 
\end{equation*}

Observe that $\dis\W(x;a+1,b+1,c+1)=\frac{c+b+2}{(a+1)(b+1)}\,x\,\W(x;a,b,c+1)$ and use \eqref{W0'(c+1) as combination of W0(c) and W0(c+1)} to write 
\begin{equation*}
-\DiffOp\left(\W(x;a+1,b+1,c+1)\right)=
\frac{(c+b+1)(c+b+2)}{(a+1)(b+1)}\left(\W(x;a,b,c)-\W(x;a,b,c+1)\right).
\end{equation*}
As a result, relation \eqref{Rodrigues formula for the type I polynomials} also holds for $n=2$, because, based on \eqref{moments}, we can deduce 
\begin{align*}
&\int_{0}^{\infty}\DiffOp\left(\W(x;a+1,b+1,c+1)\right)\dx
	\\
	&\qquad\qquad
=\frac{(c+b+1)(c+b+2)}{(a+1)(b+1)}\left(\int_{0}^{\infty}\W(x;a,b,c+1)-\int_{0}^{\infty}\W(x;a,b,c)\dx\right)\dx
	=0, 
\end{align*}
as well as 
\begin{align*}
&- \int_{0}^{\infty} x\DiffOp\left(\W(x;a+1,b+1,c+1)\right)\dx 
\\
&\qquad\qquad =\frac{(c+b+1)(c+b+2)}{(a+1)(b+1)}\left(\int_{0}^{\infty}x\W(x;a,b,c)\dx-\int_{0}^{\infty}x\W(x;a,b,c+1)\right)\dx 
\\
&\qquad\qquad 
=\frac{(c+b+1)(c+b+2)}{(a+1)(b+1)}\left(\frac{(a+1)(b+1)}{c+b+1}-\frac{(a+1)(b+1)}{c+b+2}\right)\dx
=1.
\end{align*}

Using \eqref{differentiation formula for type I polynomials} and then evoking the assumption that \eqref{Rodrigues formula for the type I polynomials} holds for a fixed $n\geq 2$, we obtain
\begin{align*}
 Q_{n+1}^\parameter{d}(x;a,b,c)
&=-\frac{1}{n}\DiffOp\left(Q_{n}^\parameter{1-d}(x;a+1,b+1,c+d)\right) \\
&=\frac{(-1)^n}{n!}\DiffOpHigherOrder{n}\left(\W\left(x;a+n,b+n,c+\floor{\frac{n+1+d}{2}}\right)\right).
\end{align*}
If we equate the first and latter members, we readily see that \eqref{Rodrigues formula for the type I polynomials} holds for $n+1$ and, as a result, we can state that it holds for all $n\in\Z^+$ by induction.
\end{proof}

From this point forth the focus will be on the type II multiple orthogonal polynomials. 

\section{Characterisation of the type II multiple orthogonal polynomials}
\label{Type II multiple orthogonal polynomials}

One of the defining properties of the four families of the very classical orthogonal polynomials (of Hermite, Laguerre, Bessel and Jacobi) is that the sequence of their derivatives is also orthogonal (ie, they satisfy Hahn's property). Together, these four families share a number of other properties. We highlight two of them. They are polynomial solutions to a second order linear differential equation with polynomial coefficients (the so called Bochner's differential equation). Their orthogonality weight functions are solutions to a first order homogeneous linear differential equation with polynomial coefficients (commonly referred to as the Pearson equation). 

The type II multiple polynomials on the step line $\polyseq[P_n^\parameter{d}(x)]$ orthogonal for  $\dis\Wvec^\parameter{d}(x;a,b,c)$ in \eqref{Wvec} also satisfy a number of properties that resemble those found among the classical polynomials. The relation between $\polyseq[P_n^\parameter{d}(x;a,b,c)]$  and its sequence of derivatives given in \eqref{differentiation formula for type II polynomials} clearly shows that \linebreak $\polyseq[P_n^\parameter{d}(x;a,b,c)]$ satisfy the Hahn's property. Additionally, we show in Section \ref{subsec: Diff eq} that they are solution to a third order linear differential equation with polynomial coefficients and we have described the vector of weight functions to be a solution to a first order homogeneous linear matrix differential equation resembling a matrix version of the Pearson equation. Therefore, it makes all sense to perceive these polynomials as (Hahn)-classical polynomials in the context of multiple orthogonality. (However, within this context, it is worth to note that in the literature there are other notions of "classical".) As a matter of fact, such properties are also shared by other Hahn-classical $2$-orthogonal polynomials. In \cite{AnaandWalter} the equivalence between these three properties was proved for the threefold symmetric case.

\subsection{Explicit expression }

Based on the differential relation  \eqref{differentiation formula for type II polynomials}, we deduce an explicit expression for the type II multiple orthogonal polynomials on the step line$\polyseq[P_n^\parameter{d}(x;a,b,c)]$  as a generalised hypergeometric function. 

\begin{theorem}
\label{explicit formulas for the 2-orthogonal polynomials}
Let $a,b,c\in\R$ such that $a>-1$, $b>-1$ and $c>\max\{0,a-b\}$ and let $\dis\polyseq[P_n^\parameter{d}(x):=P_n^\parameter{d}(x;a,b,c)]$, $d\in\{0,1\}$, be the monic $2$-orthogonal polynomial sequence with respect to $\dis\Wvec^\parameter{d}(x;a,b,c)$.
Then
\begin{subequations}
\begin{align}
\label{explicit formula for the 2-orthogonal polynomials as a 2F2}
P_n^\parameter{d}(x)
=\frac{(-1)^n\pochhammer{a+1}\pochhammer{b+1}}{\pochhammer{c+b+1+\floor{\frac{n+d}{2}}}}
\,\twoFtwo{-n,c+b+1+\floor{\frac{n+d}{2}}}{a+1,b+1}
\end{align}
or, equivalently,
\begin{align}
\label{expansion of the $2$-orthogonal polynomials over the canonical basis}
P_n^\parameter{d}(x)
=\sum_{j=0}^{n} \tau_{n,j}^\parameter{d}
\ x^j
\quad \text{with} \quad 
\tau_{n,j}^\parameter{d}=(-1)^{n-j}\binom{n}{j}
\frac{\pochhammer[n-j]{a+1+j}\pochhammer[n-j]{b+1+j}}
{\pochhammer[n-j]{c+b+1+\floor{\frac{n+d}{2}}+j}}.
\end{align}
\end{subequations}
\end{theorem}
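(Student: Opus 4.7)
The plan is to argue by induction on $n$, using the differentiation relation \eqref{differentiation formula for type II polynomials} from Theorem \ref{differentiation formulas for the multiple orthogonal polynomials - theorem} as the engine that propagates the explicit formula. Let $\tilde{P}_n^{\parameter{d}}(x;a,b,c)$ denote the right-hand side of \eqref{explicit formula for the 2-orthogonal polynomials as a 2F2}; the equivalence with the expanded form \eqref{expansion of the $2$-orthogonal polynomials over the canonical basis} is immediate after writing out the ${}_2F_2$ series and using $\pochhammer[j]{-n}=(-1)^j n!/(n-j)!$, so it is enough to prove the hypergeometric expression. From the expansion one reads off that $\tilde{P}_n^{\parameter{d}}$ is monic of degree $n$, and the base case $\tilde{P}_0^{\parameter{d}}=1=P_0^{\parameter{d}}$ is trivial.

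For the inductive step I would first establish the purely algebraic identity
\begin{equation*}
\DiffOp \tilde{P}_{n+1}^{\parameter{d}}(x;a,b,c)=(n+1)\,\tilde{P}_n^{\parameter{1-d}}(x;a+1,b+1,c+d)
\end{equation*}
by differentiating the ${}_2F_2$ series termwise and rearranging via standard Pochhammer shifts of the form $\pochhammer[n+1]{a+1}=(a+1)\pochhammer[n]{a+2}$ and $\pochhammer[j+1]{-(n+1)}=-(n+1)\pochhammer[j]{-n}$. The key combinatorial input is
\begin{equation*}
\floor{\frac{n+1+d}{2}}=d+\floor{\frac{n+1-d}{2}},
\end{equation*}
checked case-by-case on the parity of $n$ and on $d\in\{0,1\}$, which ensures the floor-indexed parameter of $\tilde{P}_{n+1}^{\parameter{d}}(x;a,b,c)$ matches the shifted parameter of $\tilde{P}_n^{\parameter{1-d}}(x;a+1,b+1,c+d)$. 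Combining this with \eqref{differentiation formula for type II polynomials} and the inductive hypothesis yields $\DiffOp\bigl(\tilde{P}_{n+1}^{\parameter{d}}-P_{n+1}^{\parameter{d}}\bigr)=0$, so the difference is a constant.

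To pin down that constant I would invoke the very first orthogonality condition $\int_0^{\infty} P_{n+1}^{\parameter{d}}(x)\,\W(x;a,b,c+d)\,\dx=0$ (which is the $k=0$ relation on the first weight on the step-line and therefore holds for every $n\geq 0$), and show the same integral vanishes when $P_{n+1}^{\parameter{d}}$ is replaced by $\tilde{P}_{n+1}^{\parameter{d}}$. Substituting the ${}_2F_2$ expression and using the moment formula \eqref{moments}, the factors $\pochhammer[j]{a+1}\pochhammer[j]{b+1}$ cancel and the integral reduces to a terminating ${}_2F_1$ at argument $1$, evaluated by Chu-Vandermonde:
\begin{equation*}
\Hypergeometric[1]{2}{1}{-(n+1),\,c+b+1+\mu}{c+b+1+d}=\frac{\pochhammer[n+1]{d-\mu}}{\pochhammer[n+1]{c+b+1+d}},\qquad \mu=\floor{\tfrac{n+1+d}{2}}.
\end{equation*}
A direct parity check gives $\mu-d\in\{0,1,\ldots,n\}$ for both $d\in\{0,1\}$, so $\pochhammer[n+1]{d-\mu}=0$, the integral vanishes, the constant is zero and the induction closes.

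The main obstacle will be the bookkeeping of the floor function $\floor{(n+d)/2}$ under the simultaneous shifts $n\to n+1$, $d\to 1-d$, $(a,b,c)\to(a+1,b+1,c+d)$: both the derivative identity and the Chu-Vandermonde reduction rely on clean case-by-case parity verifications, and a sign or index slip in those cases would break the recursion. Once those floor identities are settled the remaining Pochhammer algebra is entirely routine.
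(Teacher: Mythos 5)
Your proof is correct, but it follows a genuinely different route from the one the paper actually writes out. You take the inductive route: verify the purely algebraic identity $\DiffOp \tilde{P}_{n+1}^{\parameter{d}}(x;a,b,c)=(n+1)\tilde{P}_n^{\parameter{1-d}}(x;a+1,b+1,c+d)$ for the candidate, invoke \eqref{differentiation formula for type II polynomials} together with the inductive hypothesis (taken over all admissible parameters, which is needed since the parameters shift at each step) to conclude the difference is a constant, and kill the constant with the single $k=0$ condition $\int_0^\infty P_{n+1}^{\parameter{d}}\W(x;a,b,c+d)\,\dx=0$ via Chu--Vandermonde; your floor identity $\floor{\frac{n+1+d}{2}}=d+\floor{\frac{n+1-d}{2}}$ and the parity check $\mu-d\in\{0,\dots,n\}$ are both right, and one vanishing condition indeed suffices since $\int_0^\infty\W\,\dx=1\neq 0$. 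This is essentially the alternative the authors explicitly mention just before their proof but deliberately decline to carry out. The paper instead verifies \emph{all} of the orthogonality conditions \eqref{ortho Pn 0}--\eqref{ortho Pn 1} directly: each integral $\int_0^\infty x^kP_n^{\parameter{d}}\W\,\dx$ is reduced via the moments \eqref{moments} to a terminating ${}_4F_3$ at argument $1$, which is then evaluated with Lemma \ref{explicit formulas for hypergeometric series} (including Minton's formula \eqref{Minton's formula}). The trade-off is clear: your argument is shorter and leans on the differential structure already established in Theorem \ref{differentiation formulas for the multiple orthogonal polynomials - theorem} (no circularity there, since that theorem does not use the explicit formula), whereas the paper's computation is self-contained with respect to the Nikishin machinery and, crucially, produces the explicit nonzero values $N_n^{[d]}$ in \eqref{N2k}--\eqref{N2k plus 1} as a by-product, which are then used in Section \ref{Recurrence relation} to derive the recurrence coefficients $\gamma_n^{\parameter{d}}$. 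Your proof would leave those constants still to be computed separately.
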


To prove this theorem we need to check that $\polyseq[P_n^\parameter{d}(x)]$ in \eqref{expansion of the $2$-orthogonal polynomials over the canonical basis} satisfies the $2$-orthogonality conditions with respect to $\dis\Wvec^\parameter{d}(x;a,b,c)$, which are:  
\begin{subequations}
\begin{align}
\label{ortho Pn 0}
\int_{0}^{\infty}x^kP_n^\parameter{d}(x)\W(x;a,b,c+d)\dx
=\begin{cases}
		0, &\text{ if } n\geq 2k+1,\\
N_n\neq 0, &\text{ if } n=2k,
\end{cases}
\end{align}
and
\begin{align}
\label{ortho Pn 1}
\int_{0}^{\infty}x^kP_n^\parameter{d}(x)\W(x;a,b,c+1-d)\dx
=\begin{cases}
		0, &\text{ if } n\geq 2k+2,\\
N_n\neq 0, &\text{ if } n=2k+1, 
\end{cases}
\end{align}
where it is understood that $N_n:=N_n^{[d]}(a,b,c)\neq 0$ for all $n\in\mathbb{N}$. 

Actually, as we are dealing with a Nikishin system, the existence of a $2$-orthogonal polynomial sequence with respect to $\dis\Wvec^\parameter{d}(x;a,b,c)$ is guaranteed. By virtue of the known differential formula for a generalised hypergeometric series \cite[Eq.~16.3.1]{DLMF}, it is rather straightforward to show that the polynomials given by \eqref{explicit formula for the 2-orthogonal polynomials as a 2F2} satisfy the differential property \eqref{differentiation formula for type II polynomials} stated in Proposition \ref{differentiation formula for type II polynomials - general result}. A property that a $2$-orthogonal polynomial sequence with respect to $\dis\Wvec^\parameter{d}(x;a,b,c)$ must satisfy. Therefore, it would be sufficient to check the orthogonality conditions \eqref{ortho Pn 0}-\eqref{ortho Pn 1} when $k=0$ to then prove the result by induction on $n\in\N$ (the degree of the polynomials).

However, we opt for checking that the polynomials $P_n^\parameter{d}(x)$ in \eqref{explicit formula for the 2-orthogonal polynomials as a 2F2} satisfy all the orthogonality conditions \eqref{ortho Pn 0}-\eqref{ortho Pn 1}. On the one hand, this process enables us to show directly that the polynomials in  \eqref{explicit formula for the 2-orthogonal polynomials as a 2F2} are indeed $2$-orthogonal with respect to $\dis\Wvec^\parameter{d}(x;a,b,c)$ without arguing with the Nikishin property. On the other hand, it provides a method to derive the following explicit expressions for $N_n$ in  \eqref{ortho Pn 0}-\eqref{ortho Pn 1} 
\begin{align}\label{N2k}
	N_{2k}^{[d]}(a,b,c) 
	=\begin{cases}
   	  	\dis \frac{(2k)!\pochhammer[2k]{a+1}\pochhammer[2k]{b+1}}
    {\pochhammer[3k]{c+b+1}}  & \text{if}\quad d=0, 
    			\\[0.5cm]
	  \dis\frac{(2k)!\pochhammer[2k]{a+1}\pochhammer[2k]{b+1}\pochhammer[k]{c+b-a+1}\pochhammer[k]{c+1}
	  			\pochhammer[k]{c+b+1}}
    {\pochhammer[3k]{c+b+1}\pochhammer[3k]{c+b+2}}
	 &  \text{if}\quad d=1, 
    \end{cases} 
\end{align}
and 
\begin{align}\label{N2k plus 1}
	N_{2k+1}^{[d]}(a,b,c) 
	= \begin{cases}
	\dis - \frac{(2k+1)!\pochhammer[2k+1]{a+1}\pochhammer[2k+1]{b+1}\pochhammer[k]{c+b-a+1}\pochhammer[k]{c+1}\pochhammer[k]{c+b+1}}
    {\pochhammer[3k+1]{c+b+1}\pochhammer[3k+1]{c+b+2}}
			 &\text{if}\ d=0,
	\\[0.5cm]
	\dis \frac{(2k+1)!\pochhammer[2k+1]{a+1}\pochhammer[2k+1]{b+1} }
    {\pochhammer[3k+2]{c+b+1}}
	&\text{if}\ d=1.
	\end{cases} 
\end{align}
\end{subequations}
Note that $N_{2k+j}^{[d]}(a,b,c) $ are all positive except when $j=1$ and $d=0$.

The explicit expression for $N_n$ readily gives an explicit expression for the nonzero $\gamma$-coefficients in the third order recurrence relation \eqref{recurrence relation for a 2-OPS} satisfied by these polynomials. Such relation is discussed in Section  \ref{Recurrence relation}. All in all, we present two alternative proofs for Theorem \ref{explicit formulas for the 2-orthogonal polynomials}.

In order to do so, we need following technical lemma.

\begin{lemma}
\label{explicit formulas for hypergeometric series}
Let $n$, $p$, and $m_1,\cdots,m_p$ be positive integers such that $\dis m:=\sum_{i=1}^{p}m_i\leq n$ and $\beta,f_1,\cdots,f_p$ be complex numbers with positive real part.
Then
\begin{align}
\label{hypergeometric formula 1}
\pochhammer[m_1]{f_1}\cdots\pochhammer[m_p]{f_p}
\Hypergeometric[1]{p+1}{p}{-n,f_1+m_1,\cdots,f_p+m_p}{f_1,\cdots,f_p}
=\pochhammer{-m}
=\begin{cases}
(-1)^n n! & \text{ if } m=n,\\
		0 & \text{ if } m<n.
\end{cases}
\end{align}
and
\begin{align}
\label{Minton's formula}
\Hypergeometric[1]{p+2}{p+1}{-n,\beta,f_1+m_1,\cdots,f_p+m_p}{\beta+1,f_1,\cdots,f_p}=
\frac{n!\pochhammer[m_1]{f_1-\beta}\cdots\pochhammer[m_p]{f_p-\beta}}
{\pochhammer{\beta+1}\pochhammer[m_1]{f_1}\cdots\pochhammer[m_p]{f_p}}.
\end{align}
In particular,
\begin{align}
\label{hypergeometric formula Marjolein+Walter}
\sum_{j=0}^{n}(-1)^{n-j}\binom{n}{j}\pochhammer[m]{j+1}
=(-1)^m m!\,\Hypergeometric[1]{2}{1}{-n,m+1}{1}
=\begin{cases}
n! & \text{ if } m=n,\\
 0 & \text{ if } m<n.
\end{cases}
\end{align}
	
\end{lemma}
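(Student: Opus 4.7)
My plan is to establish the three identities in order, using \eqref{hypergeometric formula 1} as the engine for the other two. The recurring trick is the elementary Pochhammer identity $(f_i)_{m_i}\,(f_i+m_i)_k/(f_i)_k = (f_i+k)_{m_i}$, which absorbs each prefactor $\pochhammer[m_i]{f_i}$ into the summand. Applied to \eqref{hypergeometric formula 1}, the left-hand side becomes $\sum_{k=0}^{n} \frac{(-n)_k}{k!}\,Q(k)$ with $Q(k) := \prod_{i=1}^{p} (f_i+k)_{m_i}$, a monic polynomial in $k$ of degree $m = \sum_i m_i$. Since $(-n)_k/k! = (-1)^k\binom{n}{k}$, the sum equals $(-1)^n (\Delta^n Q)(0)$, where $\Delta$ is the forward difference operator. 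The classical fact that $\Delta^n$ annihilates polynomials of degree strictly less than $n$ and sends $k^n$ to $n!$ then yields $0$ if $m<n$ and $(-1)^n n!$ if $m=n$, both captured uniformly by $\pochhammer{-m}$.

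For \eqref{Minton's formula}, the additional factor $(\beta)_k/(\beta+1)_k = \beta/(\beta+k)$ turns the left-hand side, after the same absorption step, into
\[
\frac{\beta}{\prod_{i=1}^p (f_i)_{m_i}} \sum_{k=0}^{n} \frac{(-1)^k\binom{n}{k}}{\beta+k}\, Q(k).
\]
Polynomial division $Q(k) = (\beta+k)R(k) + Q(-\beta)$, valid with $\deg R = m-1 < n$, splits the sum into a polynomial part in $R$, which vanishes by the mechanism already established for \eqref{hypergeometric formula 1}, and a residual contribution $\beta Q(-\beta) \sum_{k=0}^{n} (-1)^k \binom{n}{k}/(\beta+k)$. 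The remaining partial-fraction identity
\[
\sum_{k=0}^{n} \frac{(-1)^k \binom{n}{k}}{\beta+k} = \frac{n!}{\beta\, \pochhammer{\beta+1}}
\]
follows by expanding $\frac{n!}{\beta(\beta+1)\cdots(\beta+n)}$ over its simple poles at $\beta=-k$ (the residue at $\beta = -k$ is precisely $(-1)^k\binom{n}{k}$). Substituting $Q(-\beta)=\prod_i (f_i-\beta)_{m_i}$ gives the stated closed form.

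For \eqref{hypergeometric formula Marjolein+Walter}, I would specialize \eqref{hypergeometric formula 1} at $p=1$, $f_1=1$, $m_1=m$ to obtain $m!\,\Hypergeometric[1]{2}{1}{-n,m+1}{1}=\pochhammer{-m}$. Writing $(j+1)_m = m!\,(m+1)_j/j!$ and repackaging $(-1)^j\binom{n}{j} = (-n)_j/j!$ rewrites the left-hand side of \eqref{hypergeometric formula Marjolein+Walter} as $(-1)^n m!\,\Hypergeometric[1]{2}{1}{-n,m+1}{1}$. This matches the claimed right-hand side $(-1)^m m!\,\Hypergeometric[1]{2}{1}{-n,m+1}{1}$ because the hypergeometric value equals $(-m)_n/m!$, which vanishes unless $m=n$, the one case in which $(-1)^n = (-1)^m$.

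The main obstacle is the partial-fraction evaluation in the second step; once that standard identity is in hand, the rest reduces to careful bookkeeping of Pochhammer relations and the finite-difference annihilation of low-degree polynomials that already powered the first step.
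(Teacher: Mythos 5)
Your proposal is correct, and it is genuinely more self-contained than what the paper does. The paper proves only \eqref{hypergeometric formula 1}: it cites \eqref{Minton's formula} from Minton's paper and \eqref{hypergeometric formula Marjolein+Walter} from another reference, and then establishes \eqref{hypergeometric formula 1} by exactly your absorption step $\pochhammer[m_i]{f_i}\pochhammer[j]{f_i+m_i}/\pochhammer[j]{f_i}=\pochhammer[m_i]{f_i+j}$, followed by an appeal to \eqref{hypergeometric formula Marjolein+Walter} applied (via linearity) to the monic degree-$m$ polynomial $\prod_i\pochhammer[m_i]{f_i+j}$. You instead close that step with the equivalent finite-difference fact that $\Delta^n$ kills polynomials of degree below $n$ and sends monic degree-$n$ polynomials to $n!$, which lets you obtain \eqref{hypergeometric formula Marjolein+Walter} as a specialisation ($p=1$, $f_1=1$, $m_1=m$) rather than use it as an input — thereby also dissolving the superficial circularity between \eqref{hypergeometric formula 1} and \eqref{hypergeometric formula Marjolein+Walter}. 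The substantive addition is your proof of \eqref{Minton's formula}: reducing $\pochhammer[k]{\beta}/\pochhammer[k]{\beta+1}$ to $\beta/(\beta+k)$, splitting $Q(k)=(\beta+k)R(k)+Q(-\beta)$ with $\deg R=m-1<n$ so the $R$-part vanishes by the first mechanism, and evaluating the leftover sum by the partial-fraction expansion of $n!/\bigl(\beta\pochhammer{\beta+1}\bigr)$, whose residues at $\beta=-k$ are indeed $(-1)^k\binom{n}{k}$ (the hypothesis $\Real\beta>0$ keeps you away from the poles). Your remark that the stated prefactor $(-1)^m$ in \eqref{hypergeometric formula Marjolein+Walter} agrees with the $(-1)^n$ your computation produces only because the ${}_2F_1$ vanishes unless $m=n$ is a correct and worthwhile observation. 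In short: the paper buys brevity by outsourcing two of the three identities to the literature; you buy a complete, citation-free proof at the cost of one extra (standard) partial-fraction lemma.
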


\begin{proof}
Formula \eqref{Minton's formula} was deduced by Minton in \cite{Minton} (see also \cite{Karp}).
A proof of \eqref{hypergeometric formula Marjolein+Walter} can be found in \cite{MarjoleinandWalter}.
Thus, we only need to prove \eqref{hypergeometric formula 1}.
By definition of the generalised hypergeometric series \eqref{generalised hypergeometric function},
\begin{align*}
\Hypergeometric[1]{p+1}{p}{-n,f_1+m_1,\cdots,f_p+m_p}{f_1,\cdots,f_p}
=\sum_{j=0}^{n}\frac{\pochhammer[j]{-n}\pochhammer[j]{f_1+m_1}\cdots\pochhammer[j]{f_p+m_p}}
{j!\pochhammer[j]{f_1}\cdots\pochhammer[j]{f_p}}.
\end{align*}
We have $\dis\frac{\pochhammer[j]{-n}}{j!}=(-1)^j\binom{n}{j}$ and
$\dis\frac{\pochhammer[j]{f_i+m_i}}{\pochhammer[j]{f_i}}
=\frac{\Gamma\left(f_i+m_i+j\right)\Gamma\left(f_i\right)}
{\Gamma\left(f_i+m_i\right)\Gamma\left(f_i+j\right)}
=\frac{\pochhammer[m_i]{f_i+j}}{\pochhammer[m_i]{f_i}}$, 
for each $1\leq i\leq p$, so that 
\begin{align*}
\pochhammer[m_1]{f_1}\cdots\pochhammer[m_p]{f_p}
\Hypergeometric[1]{p+1}{p}{-n,f_1+m_1,\cdots,f_p+m_p}{f_1,\cdots,f_p}
=\sum_{j=0}^{n}(-1)^j\binom{n}{j}
\pochhammer[m_1]{f_1+j}\cdots\pochhammer[m_p]{f_p+j}. 
\end{align*}
Observe that  $\dis\pochhammer[m_1]{f_1+j}\cdots\pochhammer[m_p]{f_p+j}$ is a monic polynomial of degree $m=m_1+\cdots+m_p$ on the variable $j$. Therefore,  using \eqref{hypergeometric formula Marjolein+Walter}, this sum equals $0$ for any $m<n$, and it equals $(-1)^nn!$ when $m=n$.  
\end{proof}

\begin{proof}[Proof of Theorem \ref{explicit formulas for the 2-orthogonal polynomials}.] 
We evaluate the left hand side of \eqref{ortho Pn 0} for any $k,n\in\N$, by using the expression for the moments \eqref{moments} and the polynomial expansion \eqref{expansion of the $2$-orthogonal polynomials over the canonical basis}. This successively gives 
\begin{align*}
&\int_{0}^{\infty}x^kP_n^\parameter{d}(x;a,b,c)\W(x;a,b,c+d)\dx 
	=\sum_{j=0}^{n}\tau_{n,j}^\parameter{d}\moment[j+k](a,b,c+d) \\
&    =\sum_{j=0}^{n}
    (-1)^{n-j}\binom{n}{j}
    \frac{\pochhammer[n-j]{a+1+j}\pochhammer[n-j]{b+1+j}}
    {\pochhammer[n-j]{c+b+1+\floor{\frac{n+d}{2}}+j}}
    \frac{\pochhammer[k+j]{a+1}\pochhammer[k+j]{b+1}}
    {\pochhammer[k+j]{c+b+1+d}} \\
&=\frac{(-1)^n\pochhammer{a+1}\pochhammer{b+1}\pochhammer[k]{a+1}\pochhammer[k]{b+1}}
    {\pochhammer{c+b+1+\floor{\frac{n+d}{2}}}\pochhammer[k]{c+b+1+d}}
    \,\sum_{j=0}^{n}
    \frac{\pochhammer[j]{-n}\pochhammer[j]{a+1+k}\pochhammer[j]{b+1+k}\pochhammer[j]{c+b+1+\floor{\frac{n+d}{2}}}}
    {j!\pochhammer[j]{a+1}\pochhammer[j]{b+1}\pochhammer[j]{c+b+1+d+k}} \\
&=\frac{(-1)^n\pochhammer{a+1}\pochhammer{b+1}\pochhammer[k]{a+1}\pochhammer[k]{b+1}}
    {\pochhammer{c+b+1+\floor{\frac{n+d}{2}}}\pochhammer[k]{c+b+1+d}}
    \,\Hypergeometric[1]{4}{3}{-n,a+1+k,b+1+k,c+b+1+\floor{\frac{n+d}{2}}}{a+1,b+1,c+b+1+d+k}.
\end{align*}
Based on \eqref{hypergeometric formula 1} in Lemma \ref{explicit formulas for hypergeometric series}, we obtain 
\[
	\Hypergeometric[1]{4}{3}{-n,a+1+k,b+1+k,c+b+1+\floor{\frac{n+d}{2}}}{a+1,b+1,c+b+1+d+k} = 0 , \quad \text{for any }\quad n\geq 2k+1, 
\]
so that 
\begin{subequations} 
\begin{equation}	\label{pf ortho cond 1 zero}
	\int_{0}^{\infty}x^kP_n^\parameter{d}(x)\W(x;a,b,c+d)\dx = 0,  \quad \text{for any }\quad n\geq 2k+1. 
\end{equation}
Besides, 
\begin{align*}
&	\int_{0}^{\infty}x^kP_{2k}^\parameter{d}(x;a,b,c)\W(x;a,b,c+d)\dx 
	\\
&	= \frac{\pochhammer[2k]{a+1}\pochhammer[2k]{b+1}\pochhammer[k]{a+1}\pochhammer[k]{b+1}}
    {\pochhammer[2k]{c+b+1+k+\floor{\frac{d}{2}}}\pochhammer[k]{c+b+1+d}}
    \,\Hypergeometric[1]{4}{3}{-2k,a+1+k,b+1+k,c+b+1+k+\floor{\frac{d}{2}}}{a+1,b+1,c+b+1+d+k} 
    	\notag \\
& 	= \frac{\pochhammer[2k]{a+1}\pochhammer[2k]{b+1}\pochhammer[k]{a+1}\pochhammer[k]{b+1}}
    {\pochhammer[2k]{c+b+1+k}\pochhammer[k]{c+b+1+d}}
    \,\Hypergeometric[1]{4}{3}{-2k,a+1+k,b+1+k,c+b+1+k}{a+1,b+1,c+b+1+d+k} 
    	\notag 
\end{align*}
For the $d=0$ the latter hypergeometric series simplifies to a ${}_3F_2$, which on account of the identity \eqref{hypergeometric formula 1}, it can be evaluated to 
\begin{align*}
	&
	\Hypergeometric[1]{3}{2}{-2k,a+1+k,b+1+k}{a+1,b+1} 
	= \frac{(2k)!}{\pochhammer[k]{a+1}\pochhammer[k]{b+1}}
	 \quad \text{if}\quad d=0, 
\end{align*}
whilst for the case where $d=1$ we have to use \eqref{Minton's formula} to get 
\begin{align*}
	&\Hypergeometric[1]{4}{3}{-2k,a+1+k,b+1+k,c+b+1+k}{a+1,b+1,c+b+2+k} 
    =\frac{(2k)!\pochhammer[k]{c+b-a+1}\pochhammer[k]{c+1}}
    {\pochhammer[2k]{c+b+2+k}\pochhammer[k]{a+1}\pochhammer[k]{b+1}},
	 \quad \text{if}\quad d=1.
\end{align*}
As a result, we have 
\begin{align}
& 	\label{pf ortho cond 1}
	\int_{0}^{\infty}x^kP_{2k}^\parameter{d}(x;a,b,c)\W(x;a,b,c+d)\dx 
 = \begin{cases}
   	  	\frac{(2k)!\pochhammer[2k]{a+1}\pochhammer[2k]{b+1}}
    {\pochhammer[3k]{c+b+1}}  & \text{if}\quad d=0, 
    			\\[0.5cm]
	  \frac{(2k)!\pochhammer[2k]{a+1}\pochhammer[2k]{b+1}\pochhammer[k]{c+b-a+1}\pochhammer[k]{c+1}}
    {\pochhammer[2k]{c+b+1+k}\pochhammer[3k]{c+b+2}}
	 &  \text{if}\quad d=1. 
    \end{cases} 
\end{align}
Hence,  \eqref{pf ortho cond 1 zero} together with \eqref{pf ortho cond 1} lead to \eqref{ortho Pn 0} and \eqref{N2k}. 
\end{subequations}

Using similar arguments as before, we recall \eqref{moments}, to write the left hand side of \eqref{ortho Pn 1} as 
\begin{subequations}
\begin{align*}
&\int_{0}^{\infty}x^kP_n^\parameter{d}(x;a,b,c)\W(x;a,b,c+1-d)\dx 
	\ = \ \sum_{j=0}^{n}\tau_{n,j}^\parameter{d}\moment[j+k](a,b,c+1-d) \\
&\qquad=\frac{(-1)^n\pochhammer{a+1}\pochhammer{b+1}\pochhammer[k]{a+1}\pochhammer[k]{b+1}}
    {\pochhammer{c+b+1+\floor{\frac{n+d}{2}}}\pochhammer[k]{c+b+2-d}}
    \,\Hypergeometric[1]{4}{3}{-n,a+1+k,b+1+k,c+b+1+\floor{\frac{n+d}{2}}}{a+1,b+1,c+b+2-d+k}.
\end{align*}
Identity \eqref{hypergeometric formula 1} in Lemma \ref{explicit formulas for hypergeometric series} implies 
\[
	\Hypergeometric[1]{4}{3}{-n,a+1+k,b+1+k,c+b+1+\floor{\frac{n+d}{2}}}{a+1,b+1,c+b+2-d+k} = 0 
	\quad\text{for}\quad n\geq 2k+2, 
\]
which means that 
\begin{equation}	\label{pf ortho cond 2 zero}
	\int_{0}^{\infty}x^kP_n^\parameter{d}(x)\W(x;a,b,c+1-d)\dx = 0,  \quad \text{for any }\quad n\geq 2k+2. 
\end{equation}
When $n=2k+1$, the left hand side of \eqref{ortho Pn 1} becomes as 
\begin{align*}
&\int_{0}^{\infty}x^kP_{2k+1}^\parameter{d}(x;a,b,c)\W(x;a,b,c+1-d)\dx 
	\\
&=- \frac{\pochhammer[2k+1]{a+1}\pochhammer[2k+1]{b+1}\pochhammer[k]{a+1}\pochhammer[k]{b+1}}
    {\pochhammer[2k+1]{c+b+1+k+d}\pochhammer[k]{c+b+2-d}}
    \,\Hypergeometric[1]{4}{3}{-(2k+1),a+1+k,b+1+k,c+b+1+k+d}{a+1,b+1,c+b+2-d+k}.
\end{align*}
In order to evaluate the terminating hypergeometric series in the latter expression, we use 
\eqref{Minton's formula} for the case where $d=0$ and we use \eqref{hypergeometric formula 1} when $d=1$. 
This gives 
\begin{align*}
	& \,\Hypergeometric[1]{4}{3}{-(2k+1),a+1+k,b+1+k,c+b+1+k+d}{a+1,b+1,c+b+2-d+k} 
	= \begin{cases}
	\frac{(2k+1)!\pochhammer[k]{c+b-a+1}\pochhammer[k]{c+1}}
{\pochhammer[2k+1]{c+b+2+k}\pochhammer[k]{a+1}\pochhammer[k]{b+1}} &\text{for}\ d=0,
	\\[0.4cm]
	-\frac{(2k+1)!}{\pochhammer[k]{a+1}\pochhammer[k]{b+1} (c+b+k+1)} &\text{for}\ d=1, 
	\end{cases}
\end{align*}
so that 
\begin{align}
&\int_{0}^{\infty}x^kP_{2k+1}^\parameter{d}(x;a,b,c)\W(x;a,b,c+1-d)\dx 
	\label{pf ortho cond 2}\\
&\notag\qquad 
	= \begin{cases}
	\dis - \frac{(2k+1)!\pochhammer[2k+1]{a+1}\pochhammer[2k+1]{b+1}\pochhammer[k]{c+b-a+1}\pochhammer[k]{c+1}\pochhammer[k]{c+b+1}}
    {\pochhammer[3k+1]{c+b+1}\pochhammer[3k+1]{c+b+2}}
			 &\text{for}\ d=0,
	\\[0.5cm]
	\dis \frac{(2k+1)!\pochhammer[2k+1]{a+1}\pochhammer[2k+1]{b+1} }
    {\pochhammer[3k+2]{c+b+1}}
	&\text{for}\ d=1. 
	\end{cases}
\end{align}
The latter identity combined with \eqref{pf ortho cond 2 zero} ensure that \eqref{ortho Pn 1} and \eqref{N2k plus 1} hold for any $k$ and $n$. 
\end{subequations}
\end{proof}

\subsection{Differential equation}\label{subsec: Diff eq}

The type II multiple orthogonal polynomials of hypergeometric  type described in \eqref{explicit formula for the 2-orthogonal polynomials as a 2F2} are solutions to a third order differential equation. The structure of this differential equation resembles the structure of differential equations satisfied by other $2$-orthogonal polynomials satisfying the Hahn property, that is, Hahn-classical polynomials. Examples of such polynomials can be found for instance in  \cite{BenCheikhandDouak, DouakandMaroniClassiquesDeDimensionDeux,SemyonandWalter} among other works. 

In a way this differential equation can be seen as a Bochner type differential equation satisfied by all the classical (standardly) orthogonal polynomials. 

\begin{theorem}
\label{thm: diff eq}
Let $a,b,c\in\R$ such that $a>-1$, $b>-1$ and $c>\max\{0,a-b\}$ and let $\dis\polyseq[P_n^\parameter{d}(x):=P_n^\parameter{d}(x;a,b,c)]$, $d\in\{0,1\}$, be the monic $2$-orthogonal polynomial sequence with respect to $\dis\Wvec^\parameter{d}(x;a,b,c)$. 
Then
\begin{align}
\label{differential equation satisfied by the 2-orthogonal polynomials}
 x^2\DiffOpHigherOrder{3}\left(P_n^\parameter{d}(x)\right)
-x\varphi(x)\DiffOpHigherOrder{2}\left(P_n^\parameter{d}(x)\right)
+\psi_n^\parameter{d}(x)\DiffOp\left(P_n^\parameter{d}(x)\right)
+n\epsilon_n^\parameter{d}P_n^\parameter{d}(x)=0,
\end{align}
where $\dis\varphi(x)=x-(a+b+3)$,
$\dis\psi_n^\parameter{d}(x)=\left(\floor{\frac{n+1-d}{2}}-(c+b+2)\right)x+(a+1)(b+1)$
and \[\dis\epsilon_n^\parameter{d}=c+b+1+\floor{\frac{n+d}{2}}.\]
\end{theorem}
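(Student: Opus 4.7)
The plan is to exploit the explicit hypergeometric representation provided by Theorem \ref{explicit formulas for the 2-orthogonal polynomials}. Since $P_n^\parameter{d}(x;a,b,c)$ is, up to a constant factor, the terminating series $\twoFtwo{-n,\mu}{a+1,b+1}$ with $\mu=\epsilon_n^\parameter{d}=c+b+1+\floor{\frac{n+d}{2}}$, the third-order differential equation will come directly from the standard generalised hypergeometric differential equation: any function $y(x)={}_pF_q(\alpha_1,\dots,\alpha_p;\beta_1,\dots,\beta_q;x)$ is annihilated by the operator
\[
\theta\prod_{j=1}^{q}(\theta+\beta_j-1)-x\prod_{i=1}^{p}(\theta+\alpha_i),\qquad \theta=x\DiffOp.
\]
Specialising to $p=q=2$ with our parameters yields $\bigl[\theta(\theta+a)(\theta+b)-x(\theta-n)(\theta+\mu)\bigr]y=0$.

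Next I would convert this operator form into the standard derivative form appearing in \eqref{differential equation satisfied by the 2-orthogonal polynomials} by using the identities $\theta y = xy'$, $\theta^2 y = xy'+x^2 y''$, and $\theta^3 y = xy' + 3x^2 y'' + x^3 y'''$. Expanding $\theta(\theta+a)(\theta+b)=\theta^3+(a+b)\theta^2+ab\theta$ yields $x^3y'''+(a+b+3)x^2y''+(1+a)(1+b)xy'$, while $x(\theta-n)(\theta+\mu)$ gives $x^3y''+(1+\mu-n)x^2y'-n\mu xy$. Subtracting the two and dividing throughout by $x$ produces a relation of the form
\[
x^2 y'''-x\bigl(x-(a+b+3)\bigr)y''+\bigl((n-1-\mu)x+(a+1)(b+1)\bigr)y'+n\mu y=0,
\]
which matches the statement with $\varphi(x)=x-(a+b+3)$ and $n\epsilon_n^\parameter{d}=n\mu$.

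The only non-obvious step is the identification of the coefficient of $y'$, which reduces to showing $n-1-\mu=\floor{\frac{n+1-d}{2}}-(c+b+2)$, i.e.
\[
n-\floor{\tfrac{n+d}{2}}=\floor{\tfrac{n+1-d}{2}}.
\]
This is a short case check: for $d=0$ it says $n-\floor{n/2}=\floor{(n+1)/2}$, i.e.\ $\lceil n/2\rceil=\floor{(n+1)/2}$; for $d=1$ it reads $n-\floor{(n+1)/2}=\floor{n/2}$. Both hold for every $n\in\N$, so the $d$-dependence is captured correctly.

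The main (modest) obstacle is pure bookkeeping of the floor functions when translating $\mu$ back into the coefficients of the statement; apart from this the proof is essentially the mechanical translation of the hypergeometric ODE. An alternative route would be to combine the differential recurrence \eqref{differentiation formula for type II polynomials} of Theorem \ref{differentiation formulas for the multiple orthogonal polynomials - theorem} with the weight-side Pearson-type equation \eqref{canonical matrix differential equation satisfied by the weights d=0} and argue by induction on $n$, but this appears longer and less transparent than the direct hypergeometric computation outlined above.
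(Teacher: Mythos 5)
Your proposal is correct and follows essentially the same route as the paper: both start from the explicit ${}_2F_2$ representation in Theorem \ref{explicit formulas for the 2-orthogonal polynomials}, invoke the generalised hypergeometric differential equation in the form $\bigl[\theta(\theta+a)(\theta+b)-x(\theta-n)(\theta+\epsilon_n^\parameter{d})\bigr]y=0$ (the paper writes it divided by $x$ as \eqref{diff eq v2}), and expand the $\theta$-operators into ordinary derivatives, with the same floor-function identity $n-\floor{\tfrac{n+d}{2}}=\floor{\tfrac{n+1-d}{2}}$ handling the $d$-dependence. The computations check out, so no gap.
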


\begin{proof}
Combining the explicit formula for the polynomials given by \eqref{explicit formula for the 2-orthogonal polynomials as a 2F2} and the generalised hypergeometric differential equation \cite[Eq.~16.8.3]{DLMF}, we obtain
\begin{equation}\label{diff eq v2}
\left[\DiffOp\left(x\,\DiffOp+a\right)\left(x\,\DiffOp+b\right)\right]P_n^\parameter{d}(x)
=\left[\left(x\,\DiffOp+\epsilon_n^\parameter{d}\right)\left(x\,\DiffOp-n\right)\right]P_n^\parameter{d}(x).
\end{equation}

Moreover, we have
\begin{align*}
\left[\left(x\,\DiffOp+a\right)\left(x\,\DiffOp+b\right)\right]\left(P_n^\parameter{d}(x)\right) 
=x^2\DiffOpHigherOrder{2}\left(P_n^\parameter{d}(x)\right)
+(a+b+1)x\,\DiffOp\left(P_n^\parameter{d}(x)\right)
+abP_n^\parameter{d}(x)
\end{align*}
and, observing that $\dis\left(\epsilon_n^\parameter{d}-n+1\right)=(c+b+2)-\floor{\frac{n+1-d}{2}}$,
\begin{equation*}\begin{multlined}
\left[\left(x\,\DiffOp+\epsilon_n^\parameter{d}\right)\left(x\,\DiffOp-n\right)\right]\left(P_n^\parameter{d}(x)\right)\\
=x^2\DiffOpHigherOrder{2}\left(P_n^\parameter{d}(x)\right)
-\left(\floor{\frac{n+1-d}{2}}-(c+b+2)\right)x\,\DiffOp\left(P_n^\parameter{d}(x)\right)
-n\epsilon_n^\parameter{d}P_n(x).
\end{multlined}
\end{equation*}

Furthermore, differentiating our expression for 
$\dis\left[\left(x\,\DiffOp+a\right)\left(x\,\DiffOp+b\right)\right]\left(P_n^\parameter{d}(x)\right)$,
we obtain
\begin{equation*}
\begin{multlined}
\label{differential equation LHS2}
\left[\DiffOp\left(x\,\DiffOp+a\right)\left(x\,\DiffOp+b\right)\right]P_n(x)\\
=x^2\,\DiffOpHigherOrder{3}\left(P_n^\parameter{d}(x)\right)
+(a+b+3)x\,\DiffOpHigherOrder{2}\left(P_n^\parameter{d}(x)\right)
+(a+1)(b+1)\DiffOp\left(P_n^\parameter{d}(x)\right).
\end{multlined}
\end{equation*}

Finally, combining the former and the latter expressions, we derive the differential equation \eqref{differential equation satisfied by the 2-orthogonal polynomials}.
\end{proof}

The alternative representation \eqref{diff eq v2} for the differential equation \eqref{differential equation satisfied by the 2-orthogonal polynomials} highlights some symmetrical pro\-per\-ties of these polynomials that may be worth to explore.

\subsection{Recurrence relation}
\label{Recurrence relation}

One of the main features of the multiple orthogonal polynomials of type II on the step line (or the also referred to as {$2$-orthogonal polynomials}) is the third order recurrence relation. As a $2$-orthogonal sequence, the hypergeometric type polynomials defined by \eqref{explicit formula for the 2-orthogonal polynomials as a 2F2} necessarily satisfy a recurrence relation of the type 
\begin{align}
\label{recurrence relation satisfied by the 2-orthogonal polynomials}
P_{n+1}^\parameter{d}(x)
=\left(x-\beta_n^\parameter{d}\right)P_n^\parameter{d}(x)
-\alpha_n^\parameter{d} P_{n-1}^\parameter{d}(x)
-\gamma_{n-1}^\parameter{d} P_{n-2}^\parameter{d}(x).
\end{align}
The key point here is to obtain explicit expressions for the recurrence coefficients triplet $(\beta_n^\parameter{d},\alpha_n^\parameter{d},\gamma_n^\parameter{d})$ in \eqref{recurrence relation satisfied by the 2-orthogonal polynomials}. For simplification, we have written $\beta_n^\parameter{d}:=\beta_n^\parameter{d}(a,b,c),\ \alpha_n^\parameter{d}:=\alpha_n^\parameter{d}(a,b,c)$ and $\gamma_n^\parameter{d}:=\gamma_n^\parameter{d}(a,b,c)$.

Their expressions can be derived through the explicit expression given in \eqref{explicit formula for the 2-orthogonal polynomials as a 2F2} or in \eqref{expansion of the $2$-orthogonal polynomials over the canonical basis}. For that, in the recurrence relation \eqref{recurrence relation satisfied by the 2-orthogonal polynomials} we replace the polynomials $P_{n+1-j}^\parameter{d}(x)$ (with $j=0,1,2,3$) by their corresponding expansion expression  \eqref{expansion of the $2$-orthogonal polynomials over the canonical basis}. The linear independence of $\{x^n\}_{n\in\N}$ implies that we can equate the expressions of both sides of the recurrence relation. After equating the coefficients of $x^n$, we obtain
\[
\tau_{n+1,n}^\parameter{d}=\tau_{n,n-1}^\parameter{d}-\beta_n^\parameter{d},
\]
which implies
\[\beta_n^\parameter{d}=\tau_{n,n-1}^\parameter{d}-\tau_{n+1,n}^\parameter{d}. 
\]
Similarly, a comparison of the coefficients of $x^{n-1}$ in  \eqref{recurrence relation satisfied by the 2-orthogonal polynomials} brings the identity 
\[
\tau_{n+1,n-1}^\parameter{d}=\tau_{n,n-2}^\parameter{d}-\beta_n^\parameter{d}\tau_{n,n-1}^\parameter{d}-\alpha_n^\parameter{d},
\]
which can be rearranged to give 
\[\alpha_n^\parameter{d}=\tau_{n,n-2}^\parameter{d}-\tau_{n+1,n-1}^\parameter{d}-\left(\tau_{n,n-1}^\parameter{d}\right)^2+\tau_{n,n-1}^\parameter{d}\tau_{n+1,n}^\parameter{d}.
\]
Based on the expression for these $\tau$-coefficients in \eqref{expansion of the $2$-orthogonal polynomials over the canonical basis}, we have 
\begin{equation*}
\tau_{n,n-1}^\parameter{d}(a,b,c)=
-\frac{n(a+n)(b+n)}{c+b+\floor{\frac{n+d}{2}}+n}
\text{ and }
\tau_{n,n-2}^\parameter{d}(a,b,c)=
\frac{1}{2}\frac{n(a+n)(b+n)(n-1)(a+n-1)(b+n-1)}
{\left(c+b+\floor{\frac{n+d}{2}}+n\right)\left(c+b+\floor{\frac{n+d}{2}}+n-1\right)},  
\end{equation*}
which leads to 
\begin{subequations}
\begin{align}
&\beta_{2m+d}^\parameter{d}(a,b,c)\notag\\
&\qquad =\frac{(2m+d+1)(a+2m+d+1)(b+2m+d+1)}{c+b+3m+2d+1}-\frac{(2m+d)(a+2m+d)(b+2m+d)}{c+b+3m+2d}, 
		\label{betas 1}	\\[0.4cm]
&\beta_{2m+d}^\parameter{1-d}(a,b,c) \notag\\
&\qquad
=\frac{(2m+d+1)(a+2m+d+1)(b+2m+d+1)}{c+b+3m+d+2}-\frac{(2m+d)(a+2m+d)(b+2m+d)}{c+b+3m+d}
	\label{betas 2}
\end{align}
as well as  
\begin{align}
&\alpha_{2m+d}^\parameter{1-d}(a,b,c+d) 
=\alpha_{2m+d}^\parameter{d}(a,b,c) 	
	\label{alphas 1}\\[0.4cm]
&
\qquad=\frac{(2m+d)(a+2m+d)(b+2m+d)}{c+b+3m+2d}
\bigg(\frac{(2m+d-1)(a+2m+d-1)(b+2m+d-1)}{2(c+b+3m+2d-1)} 
	\label{alphas 2}\\
\notag&
\qquad-\frac{(2m+d)(a+2m+d)(b+2m+d)}{c+b+3m+2d} 
+\frac{(2m+d+1)(a+2m+d+1)(b+2m+d+1)}{2(c+b+3m+2d+1)}\bigg).
\end{align}
%

The expressions for the coefficients $\gamma_n^\parameter{d}$ could also be obtained in an analogous way after comparing the coefficients of $x^{n-2}$ in \eqref{recurrence relation satisfied by the 2-orthogonal polynomials}. However, it is rather  easier from the computational point of view, to derive such expressions directly from the $2$-orthogonality conditions. Indeed, the $2$-orthogonality conditions applied to the recurrence relation \eqref{recurrence relation satisfied by the 2-orthogonal polynomials}, straightforwardly imply that 
\begin{align*}
\gamma_{2n+1}^\parameter{d}(a,b,c)
=\frac{\int_{0}^{\infty}x^{n+1}P_{2n+2}^\parameter{d}(x;a,b,c)\W(x;a,b,c+d)\dx}
{\int_{0}^{\infty}x^nP_{2n}^\parameter{d}(x;a,b,c)\W(x;a,b,c+d)\dx}
\end{align*}
and
\begin{align*}
\gamma_{2n+2}^\parameter{d}(a,b,c)
=\frac{\int_{0}^{\infty}x^{n+1}P_{2n+3}^\parameter{d}(x;a,b,c)\W(x;a,b,c+1-d)\dx}
{\int_{0}^{\infty}x^nP_{2n+1}^\parameter{d}(x;a,b,c)\W(x;a,b,c+1-d)\dx}.
\end{align*}
Based on the latter alongside with \eqref{ortho Pn 0}-\eqref{N2k plus 1}, we deduce 
\begin{align}
    &\begin{multlined}\gamma_{2m+d}^\parameter{d}(a,b,c)\\
    =\frac{\pochhammer[2]{2m+d}\pochhammer[2]{a+2m+d}\pochhammer[2]{b+2m+d}(c+m+d)(c+b-a+m+d)(c+b+m+d)}
    {\pochhammer[3]{c+b+3m-1+2d}\pochhammer[3]{c+b+3m+2d}}, 
    \end{multlined}\\[0.4cm]
    &
    \gamma_{2m+d}^\parameter{1-d}(a,b,c)
    =\frac{\pochhammer[2]{2m+d}\pochhammer[2]{a+2m+d}\pochhammer[2]{b+2m+d}}{\pochhammer[3]{c+b+3m+d}}
    	\label{gammas 2}. 
\end{align}
\end{subequations}

As a consequence, we have just proved the following result. 

\begin{theorem}
\label{recurrence relation satisfied by the 2-OPS and asymptotic behaviour of the recurrence coefficients}
Let $a,b,c\in\R$ such that $a>-1$, $b>-1$ and $c>\max\{0,a-b\}$ and let $\dis\polyseq[P_n^\parameter{d}(x):=P_n^\parameter{d}(x;a,b,c)]$, $d\in\{0,1\}$, be the monic $2$-orthogonal polynomial sequence with respect to $\dis\Wvec^\parameter{d}(x;a,b,c)$.
Then the recurrence relation \eqref{recurrence relation satisfied by the 2-orthogonal polynomials} holds and the recurrence coefficients are given by \eqref{betas 1}-\eqref{gammas 2} with $\gamma_{m}^\parameter{d}(a,b,c)>0$ for all $m\geq1$. Furthermore, they have the following periodic asymptotic behaviour of period $2$: 
\begin{align*}
&\beta_{2m+d}^\parameter{d}(a,b,c)
\sim\frac{28}{9}\,m
	,\quad 
\beta_{2m+d}^\parameter{1-d}(a,b,c), 
\sim\frac{20}{9}\,m, \\[0.4cm]
&
\alpha_{2m+d}^\parameter{1-d}(a,b,c+d) =\alpha_{2m+d}^\parameter{d}(a,b,c) 
\sim\frac{208}{81}\,m^2,
	\\[0.4cm]
& \gamma_{2m+d}^\parameter{d}(a,b,c)
\sim\frac{2^6}{3^6}\,m^3,
	\quad\text{and}\quad 
\gamma_{2m+d}^\parameter{1-d}(a,b,c)
\sim\frac{2^6}{3^3}\,m^3,
 \quad 
\text{as} \quad m\to\infty.
\end{align*}
\end{theorem}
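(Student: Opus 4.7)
The plan is to follow a three-step procedure, with the existence of a third-order recurrence of the form \eqref{recurrence relation satisfied by the 2-orthogonal polynomials} guaranteed abstractly by the 2-orthogonality of the monic sequence, so that the task reduces to extracting closed expressions for $\beta_n^\parameter{d}$, $\alpha_n^\parameter{d}$, $\gamma_n^\parameter{d}$ and then determining their large-$n$ behaviour.

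First I would compute $\beta_n^\parameter{d}$ and $\alpha_n^\parameter{d}$ by matching the coefficients of $x^n$ and $x^{n-1}$, respectively, on both sides of \eqref{recurrence relation satisfied by the 2-orthogonal polynomials}, after substituting the monic expansion \eqref{expansion of the $2$-orthogonal polynomials over the canonical basis}. This gives
\[
\beta_n^\parameter{d}=\tau_{n,n-1}^\parameter{d}-\tau_{n+1,n}^\parameter{d}
\quad\text{and}\quad
\alpha_n^\parameter{d}=\tau_{n,n-2}^\parameter{d}-\tau_{n+1,n-1}^\parameter{d}
-\left(\tau_{n,n-1}^\parameter{d}\right)^2+\tau_{n,n-1}^\parameter{d}\,\tau_{n+1,n}^\parameter{d}.
\]
Inserting the explicit values of $\tau_{n,n-1}^\parameter{d}$ and $\tau_{n,n-2}^\parameter{d}$ read off from the hypergeometric expansion, and separating the cases $n=2m+d$ and $n=2m+1-d$ in order to resolve $\floor{(n+d)/2}$, produces the closed forms \eqref{betas 1}--\eqref{alphas 2}. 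For the $\gamma$-coefficients I would bypass the comparison of $x^{n-2}$ coefficients and use the orthogonality directly: integrating \eqref{recurrence relation satisfied by the 2-orthogonal polynomials} against $x^{k}\,\W(x;a,b,c+e)$ for the appropriate $e\in\{d,1-d\}$ annihilates all but two terms by \eqref{ortho Pn 0}--\eqref{ortho Pn 1}, expressing $\gamma_n^\parameter{d}$ as a ratio of the normalisations $N_n^\parameter{d}(a,b,c)$ already computed in \eqref{N2k}--\eqref{N2k plus 1}. Pochhammer simplification yields the stated formulas, and positivity $\gamma_m^\parameter{d}>0$ is immediate from the signs of the Pochhammer factors under $a,b>-1$ and $c>\max\{0,a-b\}$.

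The asymptotic part is where the subtlety lies. The two $\gamma$-formulas are rational expressions of matched degree, so their leading constants $2^6/3^6$ and $2^6/3^3$ follow by comparing $(2m)^6$ in the numerator with $(3m)^6$ or $(3m)^3$ in the denominator. For each $\beta_{2m+d}$ the two rational summands both grow like $\tfrac{8m^2}{3}$ and their $O(m^2)$ parts cancel exactly, so that $\tfrac{28m}{9}$ and $\tfrac{20m}{9}$ are read off from the $O(m)$ corrections. The main obstacle is the asymptotic of $\alpha_{2m+d}^\parameter{d}$: the outer factor grows like $\tfrac{8m^2}{3}$ while the bracketed combination in \eqref{alphas 2} exhibits a \emph{double} cancellation, as both its $O(m^2)$ leading and its $O(m)$ subleading contributions vanish identically in $a,b,c$. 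The stated limit $\tfrac{208}{81}m^2$ therefore emerges only from the $O(1)$ constants, forcing a careful expansion of each of the three fractions to three orders in $1/m$. This is the only genuinely delicate step; the remaining arithmetic is entirely routine.
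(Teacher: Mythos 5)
Your proposal is correct and follows essentially the same route as the paper: the $\beta$- and $\alpha$-coefficients are obtained by matching the coefficients of $x^n$ and $x^{n-1}$ in the recurrence using the monic expansion \eqref{expansion of the $2$-orthogonal polynomials over the canonical basis} (with exactly the same formulas in terms of the $\tau$-coefficients), the $\gamma$-coefficients are obtained as ratios of the normalisation constants \eqref{N2k}--\eqref{N2k plus 1} via the orthogonality conditions, and the asymptotics follow by direct expansion. Your remarks on the cancellations in the $\beta$- and $\alpha$-asymptotics are accurate (the bracket in \eqref{alphas 2} is a second difference, hence $O(1)$), and in fact supply more detail than the paper itself gives for that step.
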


\subsection{Asymptotic behaviour of the largest zero}
We have already stated that, because $\W(x;a,b,c)$ and $\W(x;a,b,c+1)$ form a Nikishin system, then, if $\dis\polyseq$ is the $2$-orthogonal polynomial sequence with respect to these weight functions, $P_n$ has $n$ real positive simple zeros and the zeros of consecutive polynomials interlace as there is always a zero of $P_n$ between two consecutive zeros of $P_{n+1}$.
An asymptotic upper bound for the largest zero of $2$-orthogonal polynomial sequences is intimately related to the asymptotic behaviour of their recurrence relation coefficients, as explained in the following theorem.

\begin{theorem}
\label{asymptotic behaviour of the largest zero - more general theorem}
Suppose that $\polyseq$ is a $2$-orthogonal polynomial sequence satisfying \eqref{recurrence relation for a 2-OPS} 
with $\gamma_n>0$, for all $n\in\N$, and 
$\dis\gamma_n\leq\gamma\left(n^{3\lambda}+o(n^{3\lambda})\right)$, 
$\dis\alpha_n\leq\alpha\left(n^{2\lambda}+o(n^{2\lambda})\right)$ 
and $\dis\beta_n\leq\beta\left(n^{\lambda}+o(n^{\lambda})\right)$, 
\vspace*{0,1 cm}
for some real constants $\gamma>0$ and $\alpha,\beta,\lambda\geq 0$ such that $\dis\Delta:=\gamma^2-\frac{\alpha^3}{27}>0$.
\vspace*{0,1 cm}
Then, if we denote by $x_n^\roundparameter{n}$ the largest zero in absolute value of $P_n(x)$,
$\dis\left|x_n^\roundparameter{n}\right|
\leq\left(\frac{3}{2}\tau+\beta+\frac{\alpha}{2\tau}\right)n^{\lambda}
+o\left(n^{\lambda}\right)$, $n\to+\infty$,
where $\dis\tau=\sqrt[3]{\gamma+\sqrt{\Delta}}+\sqrt[3]{\gamma-\sqrt{\Delta}}$.
\end{theorem}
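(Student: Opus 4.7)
The plan is to identify the zeros of $P_n$ with the eigenvalues of the $n\times n$ principal truncation of the companion matrix of the recurrence \eqref{recurrence relation for a 2-OPS}, and then bound the spectral radius via a diagonal similarity. Rewriting the recurrence as $xP_k=P_{k+1}+\beta_kP_k+\alpha_kP_{k-1}+\gamma_{k-1}P_{k-2}$, the multiplication-by-$x$ operator in the basis $\{P_k\}_{k\in\N}$ is represented by the banded Hessenberg matrix $M_n$ having $1$'s on the sub-diagonal, $\beta_k$ on the diagonal, $\alpha_k$ on the first super-diagonal, and $\gamma_{k-1}$ on the second super-diagonal, and the zeros of $P_n$ are exactly its eigenvalues. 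Hence $|x_n^{(n)}|\leq\rho(M_n)$.

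To bound $\rho(M_n)$, conjugate $M_n$ by a positive diagonal matrix $D=\operatorname{diag}(d_0,\ldots,d_{n-1})$, which leaves the spectrum unchanged, and use $\rho(M_n)\leq\|D^{-1}M_nD\|_1$. Writing $r_k:=d_k/d_{k-1}$, the non-trivial entries in column $k$ of $D^{-1}M_nD$ are
\begin{equation*}
\gamma_{k-1}\,r_kr_{k-1},\quad \alpha_k\,r_k,\quad \beta_k,\quad r_{k+1}^{-1}.
\end{equation*}
The scaling $r_k\sim t\,k^{-\lambda}$, with $t>0$ a free parameter, is the unique power law that renders all four quantities of order $k^\lambda$: combined with the hypotheses on $\gamma_n,\alpha_n,\beta_n$, the $k$-th column sum is at most $f(t)\,k^\lambda+o(k^\lambda)$ where $f(t):=\gamma t^2+\alpha t+\beta+t^{-1}$. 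Taking the supremum over $k\leq n-1$ yields $\rho(M_n)\leq f(t)\,n^\lambda+o(n^\lambda)$.

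The final step is to minimise $f$ over $t>0$. The stationary condition $f'(t)=2\gamma t+\alpha-t^{-2}=0$ gives $2\gamma t^3+\alpha t^2-1=0$, and the substitution $\tau=1/t$ converts this into the depressed cubic $\tau^3-\alpha\tau-2\gamma=0$, whose discriminant is $-108\Delta<0$ under the hypothesis $\Delta>0$. Cardano's formula thus delivers the unique real root $\tau=\sqrt[3]{\gamma+\sqrt{\Delta}}+\sqrt[3]{\gamma-\sqrt{\Delta}}$. Using $\gamma=(\tau^3-\alpha\tau)/2$ to eliminate $\gamma$ in $f(1/\tau)$, one computes
\begin{equation*}
f(1/\tau)=\frac{\gamma}{\tau^2}+\frac{\alpha}{\tau}+\beta+\tau=\Bigl(\frac{\tau}{2}-\frac{\alpha}{2\tau}\Bigr)+\frac{\alpha}{\tau}+\beta+\tau=\frac{3}{2}\tau+\beta+\frac{\alpha}{2\tau},
\end{equation*}
which matches the stated bound.

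The main technical obstacle is the uniformity of the error term: the asymptotic $r_k\sim t\,k^{-\lambda}$ only prescribes the sequence for large $k$, so one has to specify $r_k$ on a finite initial block and verify that the boundary columns of $D^{-1}M_nD$ (and the two truncated columns $k=0,1$, where the upper-band structure is incomplete) do not spoil the $o(n^\lambda)$ control. A cleaner alternative is to run the same argument on row sums, $\|D^{-1}M_nD\|_\infty$, which produces an identical $f(t)$ and makes the contribution of the low-index rows more transparent.
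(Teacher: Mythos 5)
Your proposal is correct and follows essentially the same route as the paper: identify the zeros of $P_n$ with the eigenvalues of the banded Hessenberg truncation of the multiplication operator, bound the spectral radius by a weighted matrix norm after a diagonal similarity with ratios of order $k^{\pm\lambda}$, and minimise the resulting function of the free scaling parameter, arriving at the same cubic $\tau^3-\alpha\tau-2\gamma=0$ and the same value $\tfrac{3}{2}\tau+\beta+\tfrac{\alpha}{2\tau}$. The only differences are cosmetic (the paper works with the transposed matrix and row sums, taking $s_k=s^k(k!)^{\lambda}$ so that $s_k/s_{k-1}=s\,k^{\lambda}$ exactly, which also disposes of the boundary-column issue you flag at the end).
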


Recalling the asymptotic behaviour for the recurrence coefficients obtained in Theorem \ref{recurrence relation satisfied by the 2-OPS and asymptotic behaviour of the recurrence coefficients}, finding an asymptotic upper bound for the largest zero of $P_n^\parameter{d}(x;a,b,c)$ is an immediate consequence of Theorem \ref{asymptotic behaviour of the largest zero - more general theorem}. \\

\begin{corollary}
\label{asymptotic behaviour of the largest zero - our case}
Let $a,b,c\in\R$ such that $a>-1$, $b>-1$ and $c>\max\{0,a-b\}$ and let $\dis\polyseq[P_n^\parameter{d}(x;a,b,c)]$, $d\in\{0,1\}$, be the monic $2$-orthogonal polynomial sequence with respect to $\dis\Wvec^\parameter{d}(x;a,b,c)$.
Then $P_n^\parameter{d}$ has $n$ simple real positive zeros and, if we denote by $x_n^\roundparameter{n}$ the largest zero of $P_n^\parameter{d}(x;a,b,c)$, then 
\[ 
	x_n^\roundparameter{n}<M\cdot n+o(n), \quad \text{as}\quad n\to+\infty, 
\]
where $\dis M=\frac{3}{2}\tau+\beta+\frac{\alpha}{2\tau}\approx 3.484$,
with  $\dis\alpha=\frac{52}{81}$, $\dis\beta=\frac{14}{9}$, $\dis\gamma=\frac{8}{27}$, $\dis\Delta=\gamma^2-\frac{\alpha^3}{27}=\frac{1119104}{14348907}>0$ and $\dis\tau=\sqrt[3]{\gamma+\sqrt{\Delta}}+\sqrt[3]{\gamma-\sqrt{\Delta}}$.
\end{corollary}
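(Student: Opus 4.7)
The plan is to derive the result as a direct application of Theorem \ref{asymptotic behaviour of the largest zero - more general theorem} with $\lambda=1$ to the third-order recurrence \eqref{recurrence relation satisfied by the 2-orthogonal polynomials} satisfied by $\polyseq[P_n^\parameter{d}(x;a,b,c)]$. Essentially all ingredients required by that theorem -- the growth exponent, the explicit asymptotic constants $\alpha,\beta,\gamma$, positivity of $\gamma_n$, and positivity of $\Delta=\gamma^2-\alpha^3/27$ -- have already been established in Theorem \ref{recurrence relation satisfied by the 2-OPS and asymptotic behaviour of the recurrence coefficients}, so the task is essentially algebraic bookkeeping.

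First I would re-express the asymptotic formulas of Theorem \ref{recurrence relation satisfied by the 2-OPS and asymptotic behaviour of the recurrence coefficients}, which are written in terms of the parameter $m$ via $n=2m+d$, directly in terms of $n$ using $m\sim n/2$. Since the general theorem requires a single asymptotic upper bound valid along the whole sequence, on each of the three lines of Theorem \ref{recurrence relation satisfied by the 2-OPS and asymptotic behaviour of the recurrence coefficients} I would take the larger of the two constants corresponding to $d\in\{0,1\}$. For $\beta$ the relevant maximum is $\max\{28/9,\,20/9\}=28/9$, giving $\beta_n^\parameter{d}\leq\tfrac{14}{9}n+o(n)$; for $\alpha$ there is a single constant, producing $\alpha_n^\parameter{d}\leq\tfrac{52}{81}n^2+o(n^2)$; for $\gamma$ the maximum is $\max\{2^6/3^6,\,2^6/3^3\}=64/27$, yielding $\gamma_n^\parameter{d}\leq\tfrac{8}{27}n^3+o(n^3)$. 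This identifies the constants $\alpha=52/81$, $\beta=14/9$, $\gamma=8/27$ appearing in the statement. A short rational arithmetic computation then confirms $\Delta=\gamma^2-\alpha^3/27=1119104/14348907>0$, and the positivity $\gamma_n>0$ is already part of Theorem \ref{recurrence relation satisfied by the 2-OPS and asymptotic behaviour of the recurrence coefficients}.

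Once all hypotheses are verified, Theorem \ref{asymptotic behaviour of the largest zero - more general theorem} delivers $|x_n^\roundparameter{n}|\leq M\,n+o(n)$ with $M=\tfrac{3}{2}\tau+\beta+\tfrac{\alpha}{2\tau}$, and a numerical evaluation of $\tau=\sqrt[3]{\gamma+\sqrt{\Delta}}+\sqrt[3]{\gamma-\sqrt{\Delta}}$ confirms $M\approx 3.484$. The absolute value can be removed because the Nikishin property of Theorem \ref{Nikishin system} guarantees that every zero of $P_n^\parameter{d}$ is a positive real number. I do not expect any genuine obstacle: the only step requiring care is correctly tracking the factors $1/2$, $1/4$, $1/8$ when converting powers of $m$ into powers of $n$ and amalgamating the two parity-dependent subsequences into a single uniform $n$-indexed upper bound.
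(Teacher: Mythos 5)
Your proposal is correct and follows exactly the paper's route: the paper likewise obtains the corollary as an immediate consequence of Theorem \ref{asymptotic behaviour of the largest zero - more general theorem} with $\lambda=1$, converting the $m$-asymptotics of Theorem \ref{recurrence relation satisfied by the 2-OPS and asymptotic behaviour of the recurrence coefficients} via $m\sim n/2$ and taking the larger constant over the two parities, and your values $\alpha=\tfrac{52}{81}$, $\beta=\tfrac{14}{9}$, $\gamma=\tfrac{8}{27}$, $\Delta=\tfrac{1119104}{14348907}$ all check out. The removal of the absolute value via the Nikishin (AT-system) property is also how the paper justifies the positivity and simplicity of the zeros.
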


We illustrate the latter result in Figure \ref{Fig Zeros}, produced in {\it Maple}. The curve $y=3.484 n$ gives clearly an upper bound for the largest zero of $P_n^\parameter{d}(x;a,b,c)$ for each $d\in\{0,1\}$. As already explained, the even order polynomials do not depend on $d$. Therefore, the zeros of $P_{2n}^\parameter{0}$ and $P_{2n}^\parameter{1}$ coincide, but a similar remark does not apply for the odd order polynomials because $P_{2n+1}^\parameter{0}\neq P_{2n+1}^\parameter{1}$. A sharper upper bound could be obtained if we consider further terms in the estimation and adapting the proof accordingly. For the purpose of this investigation this is not so relevant. 

\begin{figure}[htb]
\includegraphics[scale=0.8]{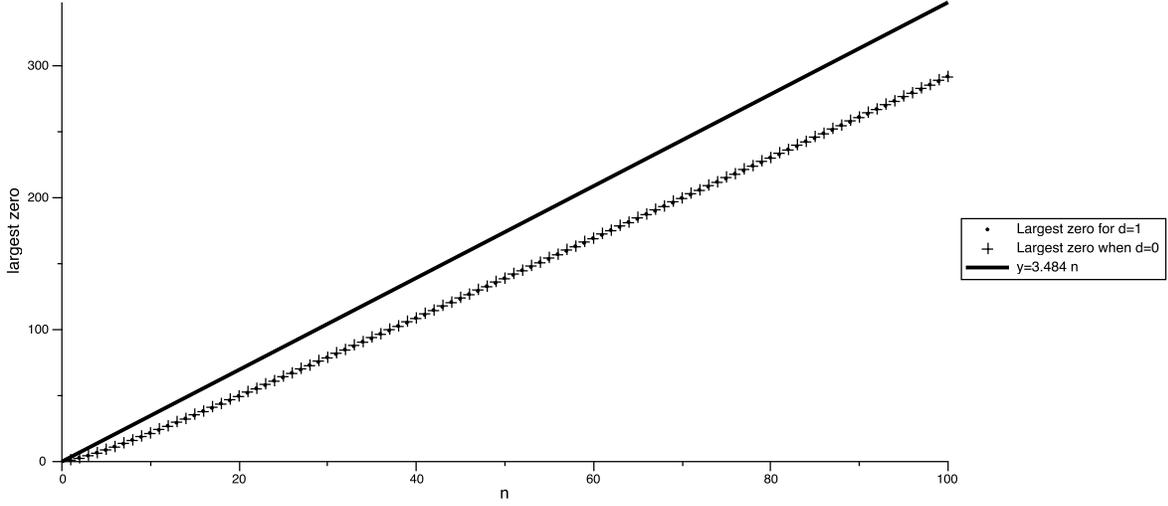}
\caption{Joint plots of the largest zeros of $P_n^\parameter{d}(x;2,1.5,5)$ for $d=0$ (crosses) and $d=1$ (dots)  for each $n=1,\ldots,100$ with the the upper bound curve $y=3.484 x$ in solid line. }
\label{Fig Zeros}
\end{figure}

Observe that Theorem \ref{asymptotic behaviour of the largest zero - more general theorem} is a generalisation of \cite[Th.~2.2]{AnaandWalter}, which is obtained from it when we set $\alpha=\beta=0$. The proof presented below is inspired on the proof of \cite[Th.~2.2]{AnaandWalter}.

\begin{proof}[Proof of Theorem \ref{asymptotic behaviour of the largest zero - more general theorem}.]
Consider the Hessenberg matrix
\begin{align*}
\Hn=\begin{bmatrix} 
 \beta_0 & 1 & 0 & 0 & \cdots & 0 \\
\alpha_1 & \beta_1 & 1 & 0 & \cdots & 0 \\ 
\gamma_1 & \alpha_2 & \beta_2 & 1 & \ddots & \vdots \\
 	   0 & \ddots & \ddots & \ddots & \ddots & 0 \\
  \vdots & \ddots & \gamma_{n-3} & \alpha_{n-2} & \beta_{n-2} & 1\\
 	   0 & \cdots & 0 & \gamma_{n-2} & \alpha_{n-1} & \beta_{n-1}
\end{bmatrix}
\end{align*}
so that the recurrence relation can be expressed as 
\begin{align*}
\Hn\begin{bmatrix} P_0(x) \\ P_1(x) \\  \vdots \\ P_{n-2}(x) \\ P_{n-1}(x) \end{bmatrix}
= x\begin{bmatrix} P_0(x) \\ P_1(x) \\  \vdots \\ P_{n-2}(x) \\ P_{n-1}(x) \end{bmatrix}
- P_n(x) \begin{bmatrix} 0 \\ 0 \\  \vdots \\ 0 \\ 1 \end{bmatrix}
\end{align*}
and each zero of $P_n$ is an eigenvalue of the matrix $\Hn$.
Then, if $\dis\rho(\Hn)=\max\{|\lambda|:\lambda\text{ is an eigenvalue of }\Hn\}$
is the spectral radius of the matrix $\Hn$, $\left|x_n^\roundparameter{n}\right|<\rho(\Hn)$. 

Moreover, $\rho(\Hn)$ is bounded from above by the matrix norm (see \cite[Section 5.6]{MatrixAnalysis})
\begin{align*}
\|\Hn\|_S=\|S^{-1}\Hn S\|_\infty=
\max_{1\leq i\leq n}\left\{\sum_{j=1}^{n}|(S^{-1}\Hn S)_{i,j}|\right\},
\end{align*}
for a non-singular $n\times n$ matrix $S$.
In particular, if we set $S=\diag(s_1,\cdots,s_n)$, with $\det(S)=s_1\cdots s_n\neq 0$, then
\begin{align*}
\|\Hn\|_S=
\max_{1\leq i\leq n}
\bigg\{ &
\left|\frac{s_2+\beta_0s_1}{s_1}\right|,
\left|\frac{s_3+\beta_1s_2+\alpha_1s_1}{s_2}\right|
\left|\frac{s_4+\beta_2s_3+\alpha_2s_2+\gamma_1s_1}{s_3}\right|,
\cdots, \\&
\left|\frac{s_n+\beta_{n-2}s_{n-1}+\alpha_{n-2}s_{n-2}+\gamma_{n-3}s_{n-3}}{s_{n-1}}\right|,
\left|\frac{\beta_{n-1}s_n+\alpha_{n-1}s_{n-1}+\gamma_{n-2}s_{n-2}}{s_n}\right|
\bigg\}.
\end{align*}

If we take $s_k=s^k(k!)^{\lambda}$, for some $s>0$, then, using the asymptotic behaviour of their recurrence coefficients in \eqref{recurrence relation satisfied by the 2-orthogonal polynomials}, we have
\begin{align*}
\left|\frac{s_k+\beta_{k-2}s_{k-1}+\alpha_{k-2}s_{k-2}+\gamma_{k-3}s_{k-3}}{s_{k-1}}\right|
\leq\left(s+\beta+\frac{\alpha}{s}+\frac{\gamma}{s^2}\right)k^{\lambda}
+o\left(k^{\lambda}\right),
\;k\to+\infty,
\end{align*}
and, as a result,
\begin{align*}
\|\Hn\|_S
\leq\left(s+\beta+\frac{\alpha}{s}+\frac{\gamma}{s^2}\right)n^{\lambda}
+o\left(n^{\lambda}\right),
\;n\to+\infty.
\end{align*}

To find the sharpest upper bound for $\dis\|\Hn\|_S$ \Big(and, as a consequence, for $\dis\left|x_n^\roundparameter{n}\right|$\Big) given by this formula we need to find the minimum value of $\dis f(s)=s+\beta+\frac{\alpha}{s}+\frac{\gamma}{s^2}$ on $\R^+$.
With that purpose, we look for the roots of
$\dis f'(s)=1-\frac{\alpha}{s^2}-\frac{2\gamma}{s^3} =\frac{1}{s^3}\left(s^3-\alpha s-2\gamma\right)$.
Due to the condition $\Delta>0$, we know that $f'$ has one real root and two complex roots.
\vspace*{0,1 cm}
Moreover, the real root is 
$\dis\tau=\sqrt[3]{\gamma+\sqrt{\Delta}}+\sqrt[3]{\gamma-\sqrt{\Delta}}>0$
(where we are taking real and positive square and cubic roots).
Furthermore, $\dis f''(s)=\frac{2\alpha}{s^3}+\frac{6\gamma}{s^4}$ hence $\dis f''(\tau)>0$ and, consequently, the choice $s=\tau$ gives a minimum value to $\dis f(s)=s+\beta+\frac{\alpha}{s}+\frac{\gamma}{s^2}$.
Finally, $f'(\tau)=0$ implies $\dis\frac{2\gamma}{\tau^3}=1-\frac{\alpha}{\tau}$ therefore 
$\dis f(\tau)=\frac{3}{2}\,\tau+\beta+\frac{\alpha}{2\tau}$, which implies the result. 
\end{proof}

\subsection{Confluence relation with the modified Bessel weights}
\label{Confluence relation with the modified Bessel weights}

There is a clear relation by confluence of these multiple orthogonal polynomials to those studied independently in  \cite{BenCheikhandDouak} and  \cite{SemyonandWalter}. In the latter, the study addressed multiple orthogonal polynomials of both types, while the former concentrated on the type II.  In both works, the focus was on weights involving the  modified Bessel functions and defined on the positive real line, for parameters $a,b>-1$, as follows
\begin{align}
\label{Bessel weights definition}
\V_0(x;a,b)=\frac{2}{\Gamma(a+1)\Gamma(b+1)}x^\frac{a+b}{2}\BesselK[2\sqrt{x}]{a-b}
\text{ and }
\V_1(x;a,b)=-\DiffOp\left(\V_0(x;a+1,b+1)\right),
\end{align}
where, as mentioned in the introduction, $\BesselK{\nu}$ is the modified Bessel function of second kind (also known as Macdonald function).

Let $\dis\polyseq[R_n(x;a,b)]$ be the $2$-orthogonal polynomial sequence with respect to the weight functions $\V_0(x;a,b)$ and $\V_1(x;a,b)$ supported on the positive real line. 
Similar to the $2$-orthogonal polynomials with respect to the modified Tricomi weights, this sequence can also be explicitly represented as a sequence of generalised hypergeometric polynomials by
\begin{align}
\label{2-orthogonal polynomials with respect to the modified Bessel weights}
R_n(x;a,b)=(-1)^n\pochhammer{a+1}\pochhammer{b+1}\Hypergeometric{1}{2}{-n}{a+1,b+1}. 
\end{align}

As a direct consequence of this representation, combined with the differential formula for the generalised hypergeometric series (see \cite[Eq.~16.3.1]{DLMF}), the sequence of derivatives of $\dis\polyseq[R_n(x;a,b)]$ is also $2$-orthogonal and it corresponds to the same sequence with shifted parameters.
More precisely, for any $n\in\N$,
\begin{align}
\label{differentiation formula for type II polynomials with respect to the modified Bessel weights}
\DiffOp\left(R_{n+1}(x;a,b)\right)=(n+1)R_n(x;a+1,b+1).
\end{align}
So, it fits within the family of Hahn-classical $2$-orthogonal polynomials. 

To obtain a limiting relation between the $2$-orthogonal polynomials with respect to the modified Bessel and Tricomi weights, recall that, if $p\leq q$, then the generalised hypergeometric series ${}_{p+1}F_q$ satisfies the confluent relation (see \cite[Eq.~16.8.10]{DLMF})
\begin{equation}
\lim_{|\alpha|\to\infty}
\Hypergeometric[\frac{x}{\alpha}]{p+1}{q}{\alpha_1,\cdots,\alpha_p,\alpha}{\beta_1,\cdots,\beta_q}
=\Hypergeometric{p}{q}{\alpha_1,\cdots,\alpha_p}{\beta_1,\cdots,\beta_q}.
\end{equation}

Using this formula to compare \eqref{explicit formula for the 2-orthogonal polynomials as a 2F2} and \eqref{2-orthogonal polynomials with respect to the modified Bessel weights}, we obtain the confluent relation
\begin{equation}
\label{confluent relation between the modified Tricomi and Bessel weights}
\lim_{c\to\infty}P_n^\parameter{d}\left(\frac{x}{c};a,b,c\right)=R_n\left(x;a,b\right).
\end{equation}

As expected, we can also obtain an equivalent confluent relation for the weight functions $\V_0(x;a,b)$ and $\W(x;a,b,c)$ as
\begin{equation*}
\lim_{c\to\infty}\frac{1}{c}\,\W\left(\frac{x}{c};a,b,c\right)=\V_0\left(x;a,b\right),
\end{equation*}
which is obtained  after taking $\nu=b-a$ in the following limiting relation between the modified Bessel function and the Tricomi function (see \cite[p.266]{BatemanProjectVol1})
\begin{equation*}
\lim_{c\to\infty}\Gamma(c+\nu)\KummerU[\frac{x}{c}]{a}{1-\nu}=2x^\frac{\nu}{2}\BesselK{\nu}.
\end{equation*}

\section{Connection with Hahn-classical $3$-fold symmetric polynomials}
\label{Connection with Hahn-classical 3-fold symmetric 2-orthogonal polynomials}

There are four distinct families of Hahn-classical threefold symmetric $2$-orthogonal polynomials, up to a linear transformation of the variable. A fact that was highlighted in  \cite{DouakandMaroniClassiquesDeDimensionDeux} and all these families were studied in detail in \cite{AnaandWalter}. The four arising cases were therein denominated as A, B1, B2 and C. The simplest is case A, which consists of $2$-orthogonal Appell polynomials, with no parameter dependence, and whose cubic components are particular cases of the $2$-orthogonal polynomials mentioned in Section \ref{Confluence relation with the modified Bessel weights} involving  modified Bessel weights and previously studied in \cite{SemyonandWalter} and \cite{BenCheikhandDouak}. The cases B1 and B2 have a richer structure, depend on a parameter and are related to each other via differentiation  articulated with parameter shift.  Their three cubic components are particular cases of the $2$-orthogonal polynomials studied in Section \ref{Type II multiple orthogonal polynomials}. In other words, particular choices on the parameters $(a,b,c,d)$ in \eqref{explicit formula for the 2-orthogonal polynomials as a 2F2} allows to describe the three cubic components $P_n^{[k]}$ as in \eqref{cubic comp} for these two cases. More precisely, for each of the cubic components indexed with $k\in\{0,1,2\}$ we have 
\[ P_n^\kk(x;\mu)=P_n^\parameter{d_k}\left(x;a_k,b_k,\frac{\mu}{3}\right) \quad \text{in case B1}\]
and	
\[ P_n^\kk(x;\rho)=P_n^\parameter{1-d_k}\left(x;a_k,b_k,\frac{\rho-2}{3}+d_k\right) \quad \text{in case B2}, \]
where 
\begin{equation}\label{ak bk dk}
 \left(a_0,b_0\right)=\left(-\tfrac{1}{3},-\tfrac{2}{3}\right), \  
 \left(a_1,b_1\right)=\left(-\tfrac{1}{3},\tfrac{1}{3}\right),  
 \left(a_2,b_2\right)=\left(\tfrac{2}{3},\tfrac{1}{3}\right)
\ \text{and}\  
  d_k = a_k-b_k+\tfrac{1}{3} =\tfrac{1-(-1)^k}{2}.
\end{equation}

As reported above, the cubic components $\dis P_n^\parameter{k}(x)$ for Case A are obtained from particular choices on the parameters of the $2$-orthogonal polynomials $\dis\polyseq[R_n(x;a,b)]$ in \eqref{2-orthogonal polynomials with respect to the modified Bessel weights}. Precisely, for each $k\in\{0,1,2\}$ we have $P_n^\parameter{k}(x)=R_n\left(\frac{x}{9};a_k,b_k\right)$ with  $(a_k,b_k)$ as in \eqref{ak bk dk}.  As expected, the confluent relation \eqref{confluent relation between the modified Tricomi and Bessel weights} generalises a limiting relation observed in \cite{DouakandMaroniClassiquesDeDimensionDeux} for Hahn-classical $3$-fold symmetric $2$-orthogonal polynomials: by taking $\mu,\rho\to\infty$ in cases B1 and B2, respectively, leads to case A.

Another observation lies on the fact that the cubic decomposition of threefold symmetric $2$-orthogonal polynomials preserves the Hahn-classical property. It is certainly true for cases A and B, if we take into account the identities \eqref{differentiation formula for type II polynomials with respect to the modified Bessel weights} and \eqref{differentiation formula for type II polynomials}, respectively. This property is rather intrinsic to all threefold symmetric $2$-orthogonal polynomials Hahn-classical polynomials, as we show below in Theorem  \ref{The cubic decomposition preserves the Hahn-classical property - theorem}. As a consequence, the cubic components in case C are also part of the Hahn-classical family. A further benefit from this result is on the techniques involved. Among other things, they can be adapted to prove analogous results regarding Hahn-classical polynomials with respect to other annihilating operators such as the $q$-derivative. \\

\begin{theorem}
\label{The cubic decomposition preserves the Hahn-classical property - theorem}
Let $\dis\polyseq$ be a $3$-fold symmetric Hahn-classical $2$-orthogonal polynomial sequence.
Then each of the three cubic components $\polyseq[P_n^\kk(x)]$, given by \eqref{cubic comp} with $k\in\{0,1,2\}$, is Hahn-classical. 
\end{theorem}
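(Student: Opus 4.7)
The plan is to transfer both the $2$-orthogonality and the Hahn property from $\{P_n\}$ to each cubic component via the coupled recurrences induced by the cubic decomposition, and then to link the cubic components of the derivative sequence to the derivatives of the cubic components.

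\textbf{Steps 1--2.} Write $\omega:=\e^{2\pi i/3}$. Since $\{P_n\}$ is $3$-fold symmetric, the coefficients $\beta_n$ and $\alpha_n$ in the $2$-OPS recurrence \eqref{recurrence relation for a 2-OPS} must vanish because they are incompatible with $P_n(\omega x)=\omega^n P_n(x)$, so $\{P_n\}$ satisfies $P_{n+1}(x) = x P_n(x) - \gamma_{n-1}P_{n-2}(x)$. Substituting the cubic decomposition \eqref{cubic comp} and separating the three residue classes of the index modulo $3$ produces the coupled system
\begin{align*}
P_{m+1}^{[0]}(y) &= y\, P_m^{[2]}(y) - \gamma_{3m+1}\, P_m^{[0]}(y), \\
P_m^{[1]}(y) &= P_m^{[0]}(y) - \gamma_{3m-1}\, P_{m-1}^{[1]}(y), \\
P_m^{[2]}(y) &= P_m^{[1]}(y) - \gamma_{3m}\, P_{m-1}^{[2]}(y).
\end{align*}
Systematically eliminating the two other components by iterating these three identities yields a third-order recurrence with nonvanishing last coefficient for each $\{P_m^{[k]}\}_m$; by the Favard-type spectral theorem for $2$-orthogonality, each cubic component is $2$-orthogonal. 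Now the Hahn hypothesis on $\{P_n\}$ gives that $Q_n := \tfrac{1}{n+1}P'_{n+1}$ is $2$-orthogonal, and differentiating $P_{n+1}(\omega x)=\omega^{n+1}P_{n+1}(x)$ yields $Q_n(\omega x)=\omega^n Q_n(x)$, so $\{Q_n\}$ is itself $3$-fold symmetric. Re-running the previous argument with $\{Q_n\}$ in place of $\{P_n\}$ then gives that each $\{Q_m^{[k]}\}$ is $2$-orthogonal.

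\textbf{Step 3.} Differentiating each of the three cases of \eqref{cubic comp} and matching against $Q_{3m+k}(x)=x^k Q_m^{[k]}(x^3)$ produces
\begin{align*}
(m+1)\, Q_m^{[2]}(y) &= \bigl(P_{m+1}^{[0]}\bigr)'(y), \\
(3m+1)\, Q_m^{[0]}(y) &= P_m^{[1]}(y) + 3y \bigl(P_m^{[1]}\bigr)'(y), \\
(3m+2)\, Q_m^{[1]}(y) &= 2 P_m^{[2]}(y) + 3y \bigl(P_m^{[2]}\bigr)'(y).
\end{align*}
The first identity, combined with the $2$-orthogonality of $\{Q_m^{[2]}\}$ from Step 2, immediately implies that $\bigl\{\tfrac{1}{m+1}(P_{m+1}^{[0]})'\bigr\}$ is $2$-orthogonal, so $\{P_m^{[0]}\}$ is Hahn-classical.

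\textbf{Step 4 and the main obstacle.} For $k=1,2$ the identities above mix $P_m^{[k]}$ with its derivative through the operator $L_\alpha:=\alpha+3y\tfrac{d}{dy}$, so they do not directly isolate the derivative sequence. To close the argument I would differentiate the last two coupled relations from Step 1, obtaining
\begin{align*}
\bigl(P_m^{[1]}\bigr)'(y) &= \bigl(P_m^{[0]}\bigr)'(y) - \gamma_{3m-1} \bigl(P_{m-1}^{[1]}\bigr)'(y), \\
\bigl(P_m^{[2]}\bigr)'(y) &= \bigl(P_m^{[1]}\bigr)'(y) - \gamma_{3m} \bigl(P_{m-1}^{[2]}\bigr)'(y),
\end{align*}
which mirror the structure of the last two lines of Step 1 with the $2$-orthogonality of $\{(P_{m+1}^{[0]})'\}$ playing the role previously played by that of $\{P_m^{[0]}\}$. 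Rerunning the elimination of Step 1 on this derived system should produce third-order recurrences for $\{(P_m^{[k]})'\}_{k=1,2}$, and hence their $2$-orthogonality by Favard. The hard part is that differentiating the first line of the Step 1 system yields an extra non-derivative term $P_m^{[2]}$ which turns out to be tautological (once combined with the undifferentiated relation), so that no pure derivative analogue of that line is available; handling this requires the commutation $D_y L_\alpha = L_{\alpha+3} D_y$ together with the identities of Step 3 to convert back and forth. A more conceptual alternative, which I expect to be the sharpest route, is to work at the level of the orthogonality forms: by the characterisation in \cite{DouakandMaroniClassiquesDeDimensionDeux}, Hahn-classicality of $\{P_n\}$ is equivalent to a matrix Pearson equation for the weight-form vector, and cubically decomposing that equation directly produces matrix Pearson equations for the forms of each cubic component, bypassing all recurrence manipulations.
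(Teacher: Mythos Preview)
Your treatment of $k=0$ is correct and coincides with the paper's: $\DiffOp P_{n+1}^{[0]}$ is a constant multiple of $Q_n^{[2]}$, and the cubic components of the $2$-orthogonal $3$-fold symmetric sequence $\{Q_n\}$ are $2$-orthogonal. The paper makes exactly this observation and attributes it to Douak--Maroni.

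For $k=1,2$, however, there is a genuine gap. Your plan is to run the Step~1 elimination on the differentiated system, but that elimination required \emph{three} coupled first-order relations among the three components. After differentiation you only have two such relations (the last two lines), while in place of the first you only know that $\{(P_{m+1}^{[0]})'\}$ is itself $2$-orthogonal. That is a third-order recurrence for $(P_{m}^{[0]})'$ alone, not a coupling of $(P_{m+1}^{[0]})'$ to $(P_m^{[2]})'$, so the elimination no longer closes: iterating $(P_m^{[1]})'=(P_m^{[0]})'-\gamma_{3m-1}(P_{m-1}^{[1]})'$ expresses $(P_m^{[1]})'$ as an $m$-term sum in the $(P_j^{[0]})'$, not a finite recurrence. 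You noticed the obstruction yourself when you observed that differentiating the first line of Step~1 reinserts the undifferentiated $P_m^{[2]}$; the commutation $D_yL_\alpha=L_{\alpha+3}D_y$ and the Step~3 identities let you shuffle between $Q^{[k]}$ and $(P^{[k]})'$, but they do not remove that mixed term. Even if some elimination went through, you would still have to check that the resulting third-order recurrence has nonzero last coefficient, which is not automatic.

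The route that actually closes is the one you flag in your last sentence and that the paper carries out: work at the level of the weight vector. From Proposition~\ref{Integral representation of a 3-fold symmetric Hahn-classical 2-OPS - proposition} the weights $(\UCal_0,\UCal_1)$ for $\{P_n\}$ satisfy a matrix Pearson equation~\eqref{original matrix differential equation}; Proposition~\ref{integral representation of the measures of 2-orthogonality of the cubic components} then writes the weight vector $\UCalvec^{[k]}$ of each cubic component as an explicit matrix $T_k$ times $\UCalvec(x^{1/3})$. Substituting and changing variable in \eqref{original matrix differential equation} gives a first-order matrix equation for $\UCalvec^{[k]}$, and after left-multiplying by a suitable monomial matrix one lands in the exact form \eqref{canonical matrix differential equation satisfied by general weights type II} required by Proposition~\ref{differentiation formula for type II polynomials - general result}, with the degree constraints on the entries of $\Phi^{[k]},\Psi^{[k]}$ verified by inspection. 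This is where the real work lies for $k=1,2$, and your proposal stops short of it.
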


The proof of  Theorem \ref{The cubic decomposition preserves the Hahn-classical property - theorem} consists of showing that, under the assumptions, $\polyseq[\frac{1}{n+1}\DiffOp P_{n+1}^\kk(x)]$ is also $2$-orthogonal. To do so, we first need to derive the orthogonality weights for the cubic components of a $3$-fold symmetric Hahn-classical $2$-orthogonal polynomial sequence, which are explained in Proposition \ref{integral representation of the measures of 2-orthogonality of the cubic components}. For that purpose, we recall two auxiliary results obtained in \cite{AnaandWalter} and \cite{MaroniOrthogonalite}, respectively. 
The first providing the structure of the $2$-orthogonality measures for threefold symmetric Hahn-classical polynomials. The second to give the structure for the measures, written in terms of linear functionals, associated with the corresponding cubic components. 

\begin{proposition}
\label{Integral representation of a 3-fold symmetric Hahn-classical 2-OPS - proposition}
(cf. \cite[Theorem 3.3]{AnaandWalter})
Let $\dis\polyseq$ be a $3$-fold symmetric Hahn-classical $2$-orthogonal polynomial sequence satisfying \eqref{recurrence relation for a 2-OPS} with $\beta_n=\alpha_n=0$ and $\gamma_{n+1}>0$, for all $n\in\N$. 
Then $\polyseq$ is $2$-orthogonal with respect to a pair of measures $\left(\mu_0,\mu_1\right)$ admitting the integral representations
\begin{align}
\label{Integral representation over the 3-star of a 3-fold symmetric Hahn-classical 2-OPS}
\int_{S_3}f(x)\dd\mu_j(x)
=\frac{1}{3}
\left(\int_{0}^{\gamma}f(x)\,\UCal_j(x)\dx
+\omega^{2-j}\int_{0}^{\gamma\omega}f(x)\,\UCal_j(\omega^2 x)\dx
+\omega^{j+1}\int_{0}^{\gamma\omega^2}f(x)\,\UCal_j(\omega x)\dx
\right),
\end{align}
where $j\in\{0,1\}$, 
$\dis\omega=\e^{\frac{2\pi i}{3}}$,
$\dis\gamma=\lim_{n\to\infty}\frac{27}{4}\,\gamma_n$, 
$\dis S_3=\bigcup\limits_{k=0}^{2}[0,\gamma\,\omega^k]$ and $\UCal_j:[0,\gamma]\to\R$ are two twice differentiable functions satisfying the matrix differential equation
\begin{align}
\label{original matrix differential equation}
\DiffOp\left(\Phi(x)\twovector{\UCal_0(x)}{\UCal_1(x)}\right)+\Psi(x)\twovector{\UCal_0(x)}{\UCal_1(x)}=0,
\end{align}
where
\begin{align}
\label{matrices Phi and Psi in the original matrix differential equation}
\Phi(x)=
\matrixtwobytwo{\Theta_1}{\left(1-\Theta_1\right)x}
{\frac{2\left(1-\Theta_2\right)}{\gamma_1}\,x^2}{2\Theta_2-1}
\quad \text{and}\quad 
\Psi(x)=
\begin{bmatrix}
						0 & 1 \vspace*{0,1 cm} \\
\dis\frac{2}{\gamma_1}\,x & 0
\end{bmatrix},
\end{align}
for some constants $\Theta_1$ and $\Theta_2$ such that $\dis\Theta_1,\Theta_2\neq\frac{n-1}{n}$, for any $n\geq 1$.
\end{proposition}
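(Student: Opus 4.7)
The plan is to reduce the claim to an application of Proposition~\ref{differentiation formula for type II polynomials - general result}. Concretely, for each $k\in\{0,1,2\}$ the aim is to exhibit a pair of weight functions $\left(w_0^\kk(y),w_1^\kk(y)\right)$ (in a variable $y$ supported on a subset of $\R^+$) with respect to which the cubic component $\polyseq[P_n^\kk(y)]$ is $2$-orthogonal, and for which the vector $\twovector{w_0^\kk(y)}{w_1^\kk(y)}$ satisfies a matrix first-order equation of Pearson type as in \eqref{canonical matrix differential equation satisfied by general weights type II}, with polynomial matrices $\Phi^\kk$, $\Psi^\kk$ fulfilling the degree conditions of Proposition~\ref{differentiation formula for type II polynomials - general result}. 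That proposition then immediately delivers the $2$-orthogonality of $\polyseq[\frac{1}{n+1}\DiffOp\left(P_{n+1}^\kk(y)\right)]$, which is the Hahn-classical property for the $k$-th cubic component.

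To produce the weights $\left(w_0^\kk,w_1^\kk\right)$ explicitly, I would first invoke Proposition~\ref{Integral representation of a 3-fold symmetric Hahn-classical 2-OPS - proposition} to write the measures of the parent sequence $\{P_n\}$ in terms of the pair $\left(\UCal_0,\UCal_1\right)$ on $[0,\gamma]$ solving the matrix equation \eqref{original matrix differential equation}. I would then combine this with Maroni's cubic decomposition for the associated functionals from \cite{MaroniOrthogonalite}, as collected in Proposition~\ref{integral representation of the measures of 2-orthogonality of the cubic components}. After the change of variable $y=x^3$, which flattens the $3$-star $S_3$ onto a subinterval of $\R^+$, the measures of each cubic component are expressible as explicit linear combinations of $\UCal_0(y^{1/3})$ and $\UCal_1(y^{1/3})$ with polynomial prefactors in $y^{1/3}$ whose shape depends on $k$.

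The central technical step is then to convert the matrix equation \eqref{original matrix differential equation} for $\left(\UCal_0,\UCal_1\right)$ into a Pearson-type matrix equation for $\left(w_0^\kk,w_1^\kk\right)$ in the new variable $y$. Using $\DiffOp=3x^2\DiffOp[y]$ together with the explicit form of $\Phi$ and $\Psi$ in \eqref{matrices Phi and Psi in the original matrix differential equation}, the fractional powers of $y$ arising from the chain rule are expected to cancel against the factors of $x$ appearing in the coefficients of $\Phi$ and $\Psi$, producing a matrix equation whose coefficient matrices are polynomial in $y$ with exactly the degree profile required by Proposition~\ref{differentiation formula for type II polynomials - general result}. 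This computation must be carried out separately for each $k\in\{0,1,2\}$, since the three cubic components carry different weight pairs.

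The main obstacle I anticipate is the bookkeeping associated with this change of variable: identifying precisely which polynomial combinations of $\UCal_0$, $\UCal_1$ constitute $w_0^\kk$ and $w_1^\kk$, verifying that the chain rule combined with the cubic decomposition truly yields integer (rather than fractional) powers of $y$ in the final Pearson equation, and checking the normality hypotheses of Proposition~\ref{differentiation formula for type II polynomials - general result} both for the vector $\overline{w}^\kk$ and for the shifted vector $y\,\Phi^\kk(y)\,\overline{w}^\kk(y)$. Normality of $\overline{w}^\kk$ is inherited from the $2$-orthogonality of $\{P_n^\kk\}$, which itself follows from that of $\{P_n\}$ via standard cubic-decomposition arguments; normality of the shifted vector should then be deduced from the Pearson equation in tandem with the same reasoning, after which the conclusion is immediate.
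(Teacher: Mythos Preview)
Your proposal does not address the stated proposition. The statement you are asked to prove is Proposition~\ref{Integral representation of a 3-fold symmetric Hahn-classical 2-OPS - proposition}: starting from a $3$-fold symmetric Hahn-classical $2$-orthogonal sequence $\{P_n\}$, show that its orthogonality measures $(\mu_0,\mu_1)$ admit the integral representation \eqref{Integral representation over the 3-star of a 3-fold symmetric Hahn-classical 2-OPS} over the $3$-star, with weights $\UCal_0,\UCal_1$ solving the matrix Pearson equation \eqref{original matrix differential equation}. Your plan, by contrast, is a proof sketch for Theorem~\ref{The cubic decomposition preserves the Hahn-classical property - theorem} (that each cubic component $\{P_n^{\kk}\}$ is Hahn-classical), and you explicitly \emph{invoke} Proposition~\ref{Integral representation of a 3-fold symmetric Hahn-classical 2-OPS - proposition} as an input (``I would first invoke Proposition~\ref{Integral representation of a 3-fold symmetric Hahn-classical 2-OPS - proposition}\ldots''). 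So relative to the target statement your argument is circular: you assume what is to be proved.

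For reference, the paper does not supply a proof of this proposition either; it is quoted from \cite[Theorem~3.3]{AnaandWalter}. An actual proof would have to start from the Hahn-classical hypothesis on $\{P_n\}$ (expressed, e.g., via the functional Pearson equation of Douak--Maroni) together with the $3$-fold symmetry, derive the matrix differential equation \eqref{original matrix differential equation} for the density pair $(\UCal_0,\UCal_1)$, and then assemble the representation \eqref{Integral representation over the 3-star of a 3-fold symmetric Hahn-classical 2-OPS} on the star $S_3$. None of these ingredients appear in your outline. Your sketch is, in fact, close to the paper's own proof of Theorem~\ref{The cubic decomposition preserves the Hahn-classical property - theorem}, and it is a reasonable approach for \emph{that} result, but it is not a proof of the proposition under discussion.
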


In \cite{MaroniOrthogonalite} is shown that the cubic components of a threefold symmetric 2-orthogonal sequence are also $2$-orthogonal. The structure of the vector of two linear functionals for which the cubic components are $2$-orthogonal polynomial is also explained. We recall this result in Lemma \ref{linear functionals of 2-orthogonality of the cubic components}. Beforehand, and for a matter of completeness, we note that for any measure $\mu$ such that all moments exist and are finite, we can naturally define in $\PP'$ (the dual space of the vector space of polynomials $\PP$) a linear functional $u$ such that $\langle u,p\rangle=\int p(x)\dd\mu(x)$, for all $p\in\PP$. Given a polynomial sequence $\polyseq$ in $\PP$, we can build its corresponding dual sequence  $\polyseq[u_n]$ in $\PP'$ through $\dis\langle u_n,P_m\rangle=\delta_{nm}$. 

\begin{lemma} 
\label{linear functionals of 2-orthogonality of the cubic components}
\cite{MaroniOrthogonalite} (cf. \cite[Lemma 2.3.]{AnaandWalter})
Let $\dis\polyseq$ be a threefold symmetric $2$-orthogonal polynomial sequence with respect to a pair of linear functionals $\left(u_0,u_1\right)$ 
and let $\{u_n\}_{n\in\N}$ be the corresponding dual sequence. 
Then, for each $k\in\{0,1,2\}$, the cubic component $\dis\polyseq[P_n^\kk(x)]$ is $2$-orthogonal with respect to the vector of linear functionals $\left(u_0^\kk,u_1^\kk\right)$ such that  $u_0^\kk=\sigma_3(x^ku_k)$ and $u_1^\kk=\sigma_3(x^ku_{k+3})$, where $\sigma_3:\PP'\to\PP'$ represents the linear operator defined in $\PP'$ by $\langle\sigma_3(v),p(x)\rangle:=\langle v,f(x^3)\rangle$ for any $v\in\PP'$ and $p\in\PP$. 
\end{lemma}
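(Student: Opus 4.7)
The plan is to reduce the statement to Proposition \ref{differentiation formula for type II polynomials - general result} applied separately to each cubic component $\{P_n^\kk\}_{n\in\N}$, $k\in\{0,1,2\}$. The strategy goes through three stages: first derive concrete integral representations for the $2$-orthogonality functionals $(u_0^\kk,u_1^\kk)$ of each cubic component; then show that the resulting vector of weights satisfies a Pearson-type first-order matrix differential equation with polynomial coefficients of the degrees prescribed by Proposition \ref{differentiation formula for type II polynomials - general result}; finally invoke that proposition to conclude that $\polyseq[\frac{1}{n+1}\DiffOp P_{n+1}^\kk(x)]$ is $2$-orthogonal.

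For the first stage, I would start from Maroni's description $u_0^\kk=\sigma_3(x^k u_k)$, $u_1^\kk=\sigma_3(x^k u_{k+3})$ (Lemma \ref{linear functionals of 2-orthogonality of the cubic components}) and rewrite each of the required evaluations $\langle u_0^\kk,f(y)\rangle=\langle u_k, x^k f(x^3)\rangle$ and $\langle u_1^\kk,f(y)\rangle=\langle u_{k+3}, x^k f(x^3)\rangle$ using the integral representation \eqref{Integral representation over the 3-star of a 3-fold symmetric Hahn-classical 2-OPS} of $(\mu_0,\mu_1)$ on the three-armed star $S_3$. The dual elements $u_k, u_{k+3}$ are first reduced to $\mu_0$ and $\mu_1$ through the standard structure relations for a $2$-orthogonal sequence, which produces polynomial prefactors in $x$. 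After the substitution $y=x^3$, the three rotational factors $1,\omega^{2-j},\omega^{j+1}$ combine with the powers $x^k$ to collapse into a single integral over $(0,\gamma^3)$; the parity of $k$ dictates which combinations of $\mathcal{U}_0(y^{1/3})$ and $\mathcal{U}_1(y^{1/3})$ survive, yielding explicit weights $\mathcal{V}_0^\kk(y)$ and $\mathcal{V}_1^\kk(y)$ supported on $[0,\gamma^3]$.

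For the second stage, I would use the Pearson matrix equation \eqref{original matrix differential equation}--\eqref{matrices Phi and Psi in the original matrix differential equation} satisfied by $(\mathcal{U}_0,\mathcal{U}_1)$ together with the chain rule $\frac{\dd}{\dd y}\mathcal{U}_j(y^{1/3})=\frac{1}{3y^{2/3}}\mathcal{U}_j'(y^{1/3})$ to derive a matrix first-order differential equation for $(\mathcal{V}_0^\kk,\mathcal{V}_1^\kk)^t$ of the form
\[
\DiffOp[y]\!\left(y\,\Phi^\kk(y)\twovector{\mathcal{V}_0^\kk(y)}{\mathcal{V}_1^\kk(y)}\right)+\Psi^\kk(y)\twovector{\mathcal{V}_0^\kk(y)}{\mathcal{V}_1^\kk(y)}=0,
\]
with $\Phi^\kk,\Psi^\kk$ having polynomial entries obeying the degree restrictions of Proposition \ref{differentiation formula for type II polynomials - general result} (constant entries in positions $(1,1),(1,2),(2,2)$, and $\deg\varphi,\deg\psi\leq 1$ in position $(2,1)$). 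Once this is in place, Proposition \ref{differentiation formula for type II polynomials - general result} immediately shows that $\polyseq[\frac{1}{n+1}\DiffOp P_{n+1}^\kk(y)]$ is $2$-orthogonal with respect to $y\,\Phi^\kk(y)(\mathcal{V}_0^\kk,\mathcal{V}_1^\kk)^t$, establishing the Hahn-classical property for $\{P_n^\kk\}_{n\in\N}$. Normality of the step-line indices for $(\mathcal{V}_0^\kk,\mathcal{V}_1^\kk)$ is a consequence of Maroni's lemma (since the cubic components exist as a $2$-OPS); normality for the shifted weight follows because, by Hahn-classicality of $\{P_n\}$ itself, $\polyseq[\frac{1}{n+1}\DiffOp P_{n+1}]$ is a $2$-OPS whose own cubic components inherit normality.

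The main obstacle lies in the second stage: tracking the degree structure after the change of variable $y=x^3$. The entries of $\Phi$ in \eqref{matrices Phi and Psi in the original matrix differential equation} are of degree up to $2$ in $x$, which after the substitution become fractional powers of $y$, and it is not \emph{a priori} obvious that these fractional powers cancel in the combinations producing $\Phi^\kk,\Psi^\kk$. A careful parity analysis based on the cube-root-of-unity symmetry is required, together with a cancellation argument using the explicit structure of the rotational averaging in \eqref{Integral representation over the 3-star of a 3-fold symmetric Hahn-classical 2-OPS}, to see that the fractional terms collapse into genuine polynomials of the required low degree. A secondary subtlety is the reduction of $u_{k+3}$ to a combination of $\mu_0,\mu_1$ via the $2$-orthogonality structure relations, which must be performed consistently with the threefold symmetry to preserve the integrability on $S_3$.
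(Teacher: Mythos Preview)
Your proposal does not address the stated lemma. The lemma asserts only that, for a threefold symmetric $2$-orthogonal sequence $\{P_n\}$, each cubic component $\{P_n^\kk\}$ is itself $2$-orthogonal, and identifies the associated pair of functionals as $u_0^\kk=\sigma_3(x^k u_k)$ and $u_1^\kk=\sigma_3(x^k u_{k+3})$. There is no Hahn-classical hypothesis and no differentiation in the statement. What you have written is a proof plan for Theorem~\ref{The cubic decomposition preserves the Hahn-classical property - theorem} (the cubic components inherit the Hahn-classical property), not for this lemma. Indeed, your first stage explicitly \emph{invokes} Lemma~\ref{linear functionals of 2-orthogonality of the cubic components} (``start from Maroni's description $u_0^\kk=\sigma_3(x^k u_k)$, $u_1^\kk=\sigma_3(x^k u_{k+3})$''), so as a proof of that lemma the argument would be circular.

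For the record, the paper does not prove this lemma either: it is quoted as a known result from \cite{MaroniOrthogonalite} (see also \cite[Lemma~2.3]{AnaandWalter}). The actual content is an elementary computation with dual sequences: if $\{u_n\}$ is dual to $\{P_n\}$ and $P_{3n+k}(x)=x^kP_n^\kk(x^3)$, then one checks directly that $\langle \sigma_3(x^k u_{k+3j}),P_m^\kk\rangle=\langle u_{k+3j},x^kP_m^\kk(x^3)\rangle=\langle u_{k+3j},P_{3m+k}\rangle=\delta_{j,m}$ for $j=0,1$, which identifies $u_0^\kk$ and $u_1^\kk$ as the first two elements of the dual sequence of $\{P_n^\kk\}$; the $2$-orthogonality of $\{P_n^\kk\}$ follows from the recurrence inherited from the threefold-symmetric recurrence of $\{P_n\}$. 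None of the integral representations, Pearson equations, or Proposition~\ref{differentiation formula for type II polynomials - general result} are needed, and in fact the lemma holds for arbitrary threefold symmetric $2$-orthogonal sequences, not just Hahn-classical ones.
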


As explained in \cite{MaroniRecursiveSets, MaroniSemiclassical}, all elements of the dual sequence of a $2$-orthogonal polynomial sequence can be written as a combination of the first two elements. Namely, there exists polynomials $E_n(x)$, $a_n(x)$, $F_n(x)$ and $b_n(x)$ such that $\deg E_n=\deg F_n=n$, $\deg \aaa_n\leq n$ and $\deg \bbb_n\leq n$ such that 
\begin{align*}
u_{2n}=E_n(x)u_0+\aaa_{n-1}(x)u_1
\text{ and }
u_{2n+1}=\bbb_n(x)u_0+F_n(x)u_1,\quad\text{for all} \quad n\in\N. 
\end{align*}
We have just used the product of a polynomial $f$ by a linear functional $u$, which is defined by duality: $\langle fu,p\rangle:=\langle u, f p\rangle$, for any $p\in\PP$.
The polynomials $E_n,\ F_n,\ \aaa_n$ and $\bbb_n$  satisfy recursive relations, which can be found in \cite{MaroniRecursiveSets}.  For our purpose, we need the expressions of the following ones: 
\begin{align}\label{u0 to u5}
\begin{cases} 
u_2= E_1u_0+\aaa_0u_1=\frac{x}{\gamma_1}u_0,
\\[0.2cm]
u_3=\bbb_1u_0+F_1u_1
=-\frac{1}{\gamma_2}u_0+\frac{x}{\gamma_2}u_1
=\frac{1}{\gamma_2}\left(xu_1-u_0\right),
\\[0.2cm]
u_4=E_2u_0+\aaa_1u_1
=\frac{x^2}{\gamma_1\gamma_3}u_0-\frac{1}{\gamma_3}u_1
=\frac{1}{\gamma_1\gamma_3}\left(x^2u_0-\gamma_1u_1\right),
\\[0.2cm]
u_5=\bbb_2u_0+F_2u_1
=-\left(\frac{1}{\gamma_1 \gamma_4}+\frac{1}{\gamma_2\gamma_4}\right)xu_0
+\frac{x^2}{\gamma_2 \gamma_4}u_1
=\frac{1}{\gamma_2\gamma_4}
\left(x^2u_1-\left(1+\frac{\gamma_2}{\gamma_1}\right)xu_0\right).
\end{cases}
\end{align}
The latter allows us to describe for each $k\in\{0,1,2\}$ the vector of functionals $\left(u_0^\kk,u_1^\kk\right)$ in Lemma \ref{linear functionals of 2-orthogonality of the cubic components}, which are used to derive the following result.  

\begin{proposition}
\label{integral representation of the measures of 2-orthogonality of the cubic components}
	
Suppose $\dis\polyseq$ is a $3$-fold symmetric polynomial sequence satisfying \eqref{recurrence relation for a 2-OPS}, with $\beta_n=\alpha_n=0$ and $\gamma_{n+1}>0$, whose $2$-orthogonality measures $\mu_0$ and $\mu_1$ admit the integral representations given by \eqref{Integral representation over the 3-star of a 3-fold symmetric Hahn-classical 2-OPS}. . 
Then the cubic components $\dis\polyseq[P_n^\kk(x)]$, $k\in\{0,1,2\}$, are $2$-orthogonal with respect to the pairs of measures $\left(\mu_0^\kk,\mu_1^\kk\right)$ admitting the integral representation
\begin{align}
\label{moments of the cubic decomposition sequences}
\int f(x)\dd\mu_j^\kk(x)=\int_{0}^{\gamma^3}f(x)\,\UCal_j^\kk(x)\dx,
\end{align}
where the weight functions $\UCal_j^\kk(x)$ are
\begin{subnumcases}{}
\label{cubic component orthognality weights - k=0}
\UCal_0^\zero(x)=
\frac{1}{3}\,x^{-\frac{2}{3}}\UCal_0\left(x^\frac{1}{3}\right)
\text{ and }
\UCal_1^\zero(x)=\frac{1}{3\gamma_2}
\left(x^{-\frac{1}{3}}\UCal_1\left(x^\frac{1}{3}\right)
-x^{-\frac{2}{3}}\UCal_0\left(x^\frac{1}{3}\right)\right), 
\\
\label{cubic component orthognality weights - k=1}
\UCal_0^\one(x)=\frac{1}{3}\, x^{-\frac{1}{3}}\UCal_1\left(x^\frac{1}{3}\right)
\text{ and }
\UCal_1^\one(x)=\frac{1}{3\gamma_1\gamma_3}
\left(x^{\frac{1}{3}}\,\UCal_0\left(x^\frac{1}{3}\right)
-\gamma_1x^{-\frac{1}{3}}\,\UCal_1\left(x^\frac{1}{3}\right)\right), 
\\
\label{cubic component orthognality weights - k=2}
\UCal_0^\two(x)
=\frac{1}{3\gamma_1}\,x^{\frac{1}{3}}\,\UCal_0\left(x^\frac{1}{3}\right)
\text{ and }
\UCal_1^\two(x)
=\frac{1}{3\gamma_2\gamma_4}
\left(x^{\frac{2}{3}}\,\UCal_1\left(x^\frac{1}{3}\right)
-\left(1+\frac{\gamma_2}{\gamma_1}\right)
x^{\frac{1}{3}}\,\UCal_0\left(x^\frac{1}{3}\right)\right).
\end{subnumcases}

\end{proposition}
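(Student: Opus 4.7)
By Lemma \ref{linear functionals of 2-orthogonality of the cubic components}, for each $k\in\{0,1,2\}$ the cubic component $\polyseq[P_n^\kk(x)]$ is $2$-orthogonal with respect to the pair of functionals $\bigl(\sigma_3(x^ku_k),\sigma_3(x^ku_{k+3})\bigr)$. The plan is to convert these six functionals into the integral representations \eqref{cubic component orthognality weights - k=0}--\eqref{cubic component orthognality weights - k=2} via three ingredients: the explicit relations \eqref{u0 to u5} expressing $u_2,u_3,u_4,u_5$ in terms of $u_0,u_1$; the integral representation \eqref{Integral representation over the 3-star of a 3-fold symmetric Hahn-classical 2-OPS} of $(u_0,u_1)$ over the $3$-star $S_3$; and the substitution $t=s^3$.

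First, using \eqref{u0 to u5}, I would rewrite each $x^ku_k$ and $x^ku_{k+3}$ as a linear combination of the form $\phi_0(x)u_0+\phi_1(x)u_1$ with monomial coefficients. For instance, $u_3=\tfrac{1}{\gamma_2}(xu_1-u_0)$, $xu_4=\tfrac{1}{\gamma_1\gamma_3}(x^3u_0-\gamma_1 xu_1)$, $x^2u_2=\tfrac{1}{\gamma_1}x^3u_0$, and $x^2u_5=\tfrac{1}{\gamma_2\gamma_4}\bigl(x^4u_1-(1+\gamma_2/\gamma_1)x^3u_0\bigr)$. Each term appearing in these expressions is a functional of the form $x^mu_j$ with $m\equiv j\pmod{3}$, a residue condition that will be crucial in the next step.

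Second, I would establish the key identity
\begin{equation*}
\bigl\langle \sigma_3(x^m u_j),\,p(x)\bigr\rangle
=\frac{1}{3}\bigl(1+\omega^{m-j}+\omega^{2m+j}\bigr)\int_0^{\gamma} s^m p(s^3)\,\UCal_j(s)\,\dd s.
\end{equation*}
This follows by plugging $f(x)=x^mp(x^3)$ into \eqref{Integral representation over the 3-star of a 3-fold symmetric Hahn-classical 2-OPS} and parametrising the three rays as $x=\omega^\ell s$, $s\in[0,\gamma]$, $\ell\in\{0,1,2\}$; one uses $\omega^3=1$ to check that $\UCal_j(\omega^{2}x)$ on the $\ell=1$ ray and $\UCal_j(\omega x)$ on the $\ell=2$ ray both collapse to $\UCal_j(s)$, while $x^m\dd x$ contributes a phase $(\omega^\ell)^{m+1}$. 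Since $1+\omega^{m-j}+\omega^{2m+j}=3$ precisely when $m\equiv j\pmod{3}$ and vanishes otherwise, the residue condition verified in the previous step ensures that every contribution reduces to a single integral over $[0,\gamma]$. The substitution $t=s^3$, $\dd s=\tfrac{1}{3}t^{-2/3}\dd t$ then transports this to an integral on $[0,\gamma^3]$ with weight $\tfrac{1}{3}\,t^{(m-2)/3}\UCal_j(t^{1/3})$.

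Finally, assembling the six resulting weights according to the combinations from the first step, I would read off the expressions in \eqref{cubic component orthognality weights - k=0}--\eqref{cubic component orthognality weights - k=2}. The only delicate point is verifying the residue $m\equiv j\pmod{3}$ in each case, which amounts to observing that the $3$-fold symmetry of $\polyseq$ built into \eqref{u0 to u5} guarantees precisely this congruence for every monomial that appears. Beyond that check, the proof is a bookkeeping of changes of variables and presents no essential obstacle.
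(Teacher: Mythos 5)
Your proposal is correct and follows essentially the same route as the paper's proof: both reduce to monomials, invoke Lemma \ref{linear functionals of 2-orthogonality of the cubic components} together with the expressions \eqref{u0 to u5}, exploit that $\langle u_j,x^m\rangle$ vanishes unless $m\equiv j\pmod 3$, and finish with the substitution $t=s^3$. Your explicit phase identity $\tfrac13\bigl(1+\omega^{m-j}+\omega^{2m+j}\bigr)$ is simply a more detailed justification of the vanishing/non-vanishing of moments that the paper asserts directly from \eqref{Integral representation over the 3-star of a 3-fold symmetric Hahn-classical 2-OPS}, and all six resulting weights check out against \eqref{cubic component orthognality weights - k=0}--\eqref{cubic component orthognality weights - k=2}.
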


\begin{proof}
Note that it is sufficient to prove \eqref{moments of the cubic decomposition sequences} for $f(x)=x^n$, for all $n\in\N$. Observe that the integral representations given by \eqref{Integral representation over the 3-star of a 3-fold symmetric Hahn-classical 2-OPS} imply that, for $j\in\{0,1\}$ and $n\in\N$, $\langle u_j, x^{3n+k}\rangle = 0 $, if $k\in\{0,1,2\}\backslash\{j\}$, and
\begin{align}
\label{integral representation of the nonzero moments}
\langle u_j , x^{3n+j} \rangle
=\int_{0}^{\gamma}x^{3n+j}\,\UCal_j(x)\dx
=\frac{1}{3}\int_{0}^{\gamma^3}t^{n+\frac{j-2}{3}}\,\UCal_j\left(t^\frac{1}{3}\right)\dd t.
\end{align}
Then, using Lemma \ref{linear functionals of 2-orthogonality of the cubic components}, we have
\begin{align}
\label{pf lemma eq1}
\int x^n\dd\mu_0^\zero(x)
=\langle \sigma_3(u_0),x^n\rangle
=\langle u_0,x^{3n}\rangle
=\frac{1}{3}\int_0^\gamma x^{n-\frac{2}{3}}\W_0\left(x^\frac{1}{3}\right)\dx
\end{align}
and
\begin{align}\label{pf lemma eq2}
\int x^n\dd\mu_0^\one(x)
=\langle \sigma_3(xu_1),x^n\rangle
=\langle xu_1,x^{3n}\rangle
=\langle u_1,x^{3n+1}\rangle
=\frac{1}{3}\int_0^\gamma x^{n-\frac{1}{3}}\UCal_1\left(x^\frac{1}{3}\right)\dx, 
\end{align}
which give the expressions for $\UCal_0^\zero(x)$ and  $\UCal_0^\one(x)$ in \eqref{cubic component orthognality weights - k=0} and \eqref{cubic component orthognality weights - k=1}, respectively.

Additionally, using the expressions for the elements of the dual sequence $u_2$, $u_3$, $u_4$ and $u_5$ given in \eqref{u0 to u5}, we successively get: 
\begin{align*}
& \int x^n\dd\mu_0^\two(x)
=\langle \sigma_3(x^2u_2),x^n\rangle
=\langle x^2u_2,x^{3n}\rangle
=\frac{1}{\gamma_1}\langle x^3u_0,x^{3n}\rangle
=\frac{1}{\gamma_1}\langle u_0,x^{3n+3}\rangle,
\\
& \int x^n\dd\mu_1^\zero(x)
=\langle \sigma_3(u_3),x^n\rangle
=\langle u_3,x^{3n}\rangle
=\frac{1}{\gamma_2}\left(\langle u_1,x^{3n+1}\rangle
-\langle u_0,x^{3n}\rangle\right),
\\
& \int x^n\dd\mu_1^\one(x)
=\langle \sigma_3(xu_4),x^n\rangle
=\langle xu_4,x^{3n}\rangle
=\frac{1}{\gamma_3}\left(\frac{1}{\gamma_1}\langle u_0,x^{3n+3}\rangle-\langle u_1,x^{3n+1}\rangle\right),
\\
& \int x^n\dd\mu_1^\two(x)
=\langle \sigma_3(x^2u_5),x^n\rangle
=\langle x^2u_5,x^{3n}\rangle
=\frac{1}{\gamma_2\gamma_4}
\left(\langle u_1,x^{3n+4}\rangle
-\left(1+\frac{\gamma_2}{\gamma_1}\right)
\langle u_0,x^{3n+3}\rangle\right), 
\end{align*}
which, because of the last identities in \eqref{pf lemma eq1}-\eqref{pf lemma eq2}, lead to
\begin{align*}
& \UCal_0^\two(x)=\frac{x}{\gamma_1}\,\UCal_0^\zero(x), 
\quad 
	\UCal_1^\zero(x)=\frac{1}{\gamma_2}\left(\UCal_0^\one(x)-\UCal_0^\zero(x)\right),
\quad 
	\UCal_1^\one(x)=\frac{1}{\gamma_3}\left(\UCal_0^\two(x)-\UCal_0^\one(x)\right), \\
& \UCal_1^\two(x)=\frac{x}{\gamma_2\gamma_4}
\left(\UCal_0^\one(x)-\left(1+\frac{\gamma_2}{\gamma_1}\right)\UCal_0^\zero(x)\right). 
\end{align*}
Finally, \eqref{cubic component orthognality weights - k=0}-\eqref{cubic component orthognality weights - k=2} follow directly from the latter identities, after we take into account the already obtained expressions for $\UCal_0^\zero(x)$ and $\UCal_0^\one(x)$. 
\end{proof}

We can now prove the main result in this section. 

\begin{proof}[Proof of Theorem  \ref{The cubic decomposition preserves the Hahn-classical property - theorem}.]
If $\dis\polyseq$ is a $3$-fold symmetric Hahn-classical $2$-orthogonal polynomial sequence, then the sequence of derivatives $\dis\polyseq[Q_n(x):=\frac{1}{n+1}\DiffOp \left(P_n(x)\right)]$ is also $3$-fold symmetric and $2$-orthogonal and, recalling Lemma \ref{linear functionals of 2-orthogonality of the cubic components}, the same holds for the cubic components $\dis\polyseq[Q_n^\kk(x)]$, $k\in\{0,1,2\}$.
As a result, it is straightforward to check that Theorem \ref{The cubic decomposition preserves the Hahn-classical property - theorem} is valid for $k=0$, that is, $\dis\polyseq[\frac{1}{n+1}\DiffOp P_{n+1}^\zero(x)]$ is a $2$-orthogonal polynomial sequence because
\begin{align}
\DiffOp\left(P_{n+1}^\zero(x)\right)
=\DiffOp\left(P_{3n+3}\left(x^{\frac{1}{3}}\right)\right)
=\frac{x^{-\frac{2}{3}}}{3}\,P'_{3n+3}\left(x^{\frac{1}{3}}\right)
=(n+1)x^{-\frac{2}{3}} Q_{3n+2}\left(x^{\frac{1}{3}}\right)
=(n+1)Q_{n}^\two(x).
\end{align} 
This observation was already made by Douak and Maroni in \cite{TchebyshevII}.
However, an analogous procedure does not give an obvious way to conclude about the $2$-orthogonality of $\dis\polyseq[\frac{1}{n+1}\DiffOp P_{n+1}^\kk(x)]$, for $k\in\{1,2\}$. So we take a different approach to prove this. 

According to Proposition \ref{differentiation formula for type II polynomials - general result}, to prove the Hahn-classical character of $\dis\polyseq[P_n^\kk(x)]$,  it is sufficient to find matrices
$\Phi^\kk(x)=\matrixtwobytwo{\phi_{00}}{\phi_{01}}{\varphi(x)}{\phi_{11}}$
and \vspace*{0,1 cm}
$\Psi^\kk(x)=\matrixtwobytwo{\eta_0}{\eta_1}{\psi(x)}{\xi}$,
for some constants $\phi_{00}$, $\phi_{01}$, $\phi_{11}$, $\eta_0$, $\eta_1$ and $\xi$ and polynomials $\varphi$ and $\psi$ with $\deg\varphi\leq 1$ and $\deg\psi=1$, such that
\begin{align}
\label{canonical differential equation W_k(x)}
\DiffOp\left(x\,\Phi^\kk(x)\UCalvec^\kk(x)\right)+\Psi^\kk(x)\UCalvec^\kk(x)=0,
\end{align}
where we use the notation $\dis\UCalvec^\kk(x):=\UCalvectorK{\kk}$.
Similarly, we also consider $\dis\UCalvec(x):=\UCalvector$.

To find the matrices $\dis\Phi^\kk(x)$ and $\dis\Psi^\kk(x)$, we start by rewriting formulas \eqref{cubic component orthognality weights - k=1} and \eqref{cubic component orthognality weights - k=2} as
\begin{align*}
\UCalvec^{\kk}(s)=\frac{1}{3}\,T_k(s)\UCalvec\left(s^{\frac{1}{3}}\right),
\end{align*}
where
\begin{align}
\nonumber
T_1(s)=
\matrixtwobytwo{0}{s^{-\frac{1}{3}}}
{\frac{1}{\gamma_1\gamma_3}\,s^{\frac{1}{3}}}
{-\frac{1}{\gamma_3}\,s^{-\frac{1}{3}}}
\text{ and }
T_2(s)=
\matrixtwobytwo{\frac{1}{\gamma_1}\,s^{\frac{1}{3}}}{0}
{-\frac{1}{\gamma_4}\left(\frac{1}{\gamma_2}+\frac{1}{\gamma_1}\right)s^{\frac{1}{3}}}
{\frac{1}{\gamma_2\gamma_4}\,s^{\frac{2}{3}}}.
\end{align}
These equations are naturally equivalent to
\begin{align}
\nonumber
\UCalvec\left(s^{\frac{1}{3}}\right)=3\,T_k^{-1}(s)\UCalvec^{\kk}(s),
\end{align}
with
\begin{align}
\nonumber
T_1^{-1}(s)
=\matrixtwobytwo{\gamma_1\,s^{-\frac{1}{3}}}
{\gamma_3\,\gamma_1\,s^{-\frac{1}{3}}}
{s^{\frac{1}{3}}}{0}
\text{ and }
T_2^{-1}(s)
=\matrixtwobytwo{\gamma_1\,s^{-\frac{1}{3}}}{0}
{\left(\gamma_1+\gamma_2\right)s^{-\frac{2}{3}}}
{\gamma_2\,\gamma_4\,s^{-\frac{2}{3}}}.
\end{align}

If we consider the change of variable $\dis s=x^\frac{1}{3}$ in matrix differential equation \eqref{original matrix differential equation} and then use the previous formula, we obtain, for both $k\in\{1,2\}$,
\begin{align} 
9\,x^\frac{2}{3}\DiffOp\left(\Phi\left(x^\frac{1}{3}\right)T_k^{-1}(x)\UCalvec^\kk(x)\right)
+3\Psi\left(x^\frac{1}{3}\right)T_k^{-1}(x)\UCalvec^\kk(x)=0,
\end{align}
or, equivalently,
\begin{align}
\label{matrix differential equation for the 2-orthogonality weights of the cubic components}
3\,x^\frac{2}{3}\Phi\left(x^\frac{1}{3}\right)T_k^{-1}(x)\DiffOp\left(\UCalvec^\kk(x)\right)
+\left(\Psi\left(x^\frac{1}{3}\right)T_k^{-1}(x)
+3x^\frac{2}{3}\DiffOp\left(\Phi\left(x^\frac{1}{3}\right)T_k^{-1}(x)\right)\right)\UCalvec^\kk(x)=0.
\end{align}

For $k=1$, relation \eqref{matrix differential equation for the 2-orthogonality weights of the cubic components} reads as
\begin{align*} 
\begin{multlined}
3\matrixtwobytwo{\left(1-\Theta_1\right)x^{\frac{4}{3}}+\Theta_1\gamma_1x^{\frac{1}{3}}}
{\Theta_1\gamma_1\gamma_3x^{\frac{1}{3}}}
{x}{6\left(1-\Theta_2\right)\gamma_3\,x}
\DiffOp\left(\UCalvec^\one(x)\right)
\\
+\matrixtwobytwo{\left(3-2\Theta_1\right)x^{\frac{1}{3}}-\Theta_1\gamma_1x^{-\frac{2}{3}}}
{-\Theta_1\gamma_1\gamma_3x^{-\frac{2}{3}}}
{3}{-2\left(2\Theta_2-1\right)\gamma_3}\UCalvec^\one(x)
=0, 
\end{multlined}
\end{align*}
which, after a  multiplication by $\dis\matrixtwobytwo{0}{1}{x^{\frac{2}{3}}}{0}$, leads to \eqref{canonical differential equation W_k(x)} for $k=1$, with
\begin{align*}
\Phi^\parameter{1}(x)=
3\matrixtwobytwo{\frac{1}{2\left(2\Theta_2-1\right)\gamma_3}}{\frac{1-\Theta_2}{2\Theta_2-1}}
{\left(1-\Theta_1\right)x+\Theta_1\gamma_1}{\Theta_1\gamma_1\gamma_3}
\text{ and }
\Psi^\parameter{1}(x)=\matrixtwobytwo{0}{1}
{\left(4\Theta_1-3\right)x-4\Theta_1\gamma_1}{-4\Theta_1\gamma_1\gamma_3}.
\end{align*}
As a result, $\dis\polyseq[P_n^\one(x)]$ is a Hahn-classical $2$-orthogonal polynomial sequence.

To prove the result for $k=2$, we start by using the relation $\dis\gamma_1=\frac{1}{3}\left(4\Theta_1-3\right)\gamma_2$ (see \cite[Theorem 3.2]{AnaandWalter} with $n=0$) to rewrite
\begin{align}
\nonumber
T_2^{-1}(s)
=\matrixtwobytwo
{\frac{1}{3}\left(4\Theta_1-3\right)\gamma_2\,s^{-\frac{1}{3}}}{0}
{\frac{4}{3}\,\Theta_1\gamma_2s^{-\frac{2}{3}}}
{\gamma_2\,\gamma_4\,s^{-\frac{2}{3}}}.
\end{align}
Therefore \eqref{matrix differential equation for the 2-orthogonality weights of the cubic components} becomes
\begin{align*} 
\begin{multlined}
3\matrixtwobytwo{\frac{1}{3}\Theta_1\gamma_2\,x^{\frac{1}{3}}}
{\gamma_2\gamma_4\left(1-\Theta_1\right)\,x^{\frac{1}{3}}}
{2\left(1-\Theta_2\right)x+\frac{4}{3}\gamma_2\left(2\Theta_2-1\right)}
{\gamma_2\gamma_4\left(2\Theta_2-1\right)}
\DiffOp\left(\UCalvec^\two(x)\right) \\
+\matrixtwobytwo{\Theta_1\gamma_2\,x^{-\frac{2}{3}}}
{\Theta_1\gamma_2\gamma_4\,x^{-\frac{2}{3}}}
{2\left(2-\Theta_2\right)+\frac{8}{3}\,\gamma_2\left(1-2\Theta_2\right)x^{-1}}
{-2\gamma_2\gamma_4\left(2\Theta_2-1\right)x^{-1}}
\UCalvec^\two(x)
=0, 
\end{multlined}
\end{align*}
which, after a multiplication by $\matrixtwobytwo{x^{\frac{2}{3}}}{0}{0}{x}$, corresponds to \eqref{canonical differential equation W_k(x)} for $k=2$, with
\begin{align*}
\Phi^\parameter{2}(x)
=\matrixtwobytwo{\frac{\Theta_1\gamma_2}{3\gamma_1\gamma_4}}
{\left(1-\Theta_1\right)\frac{\gamma_2}{\gamma_1}}
{\varphi^\parameter{2}(x)}{3\left(2\Theta_2-1\right)\gamma_2\gamma_4}
\quad \text{ and }\quad 
\Psi^\parameter{2}(x)
=\matrixtwobytwo{0}{1}
{\psi^\parameter{2}(x)}{-5\left(2\Theta_2-1\right)\gamma_2\gamma_4},
\end{align*}
where $\dis\varphi^\parameter{2}(x)=6\left(1-\Theta_2\right)x+4\left(2\Theta_2-1\right)\gamma_2$ and
$\dis\psi^\parameter{2}(x)=2\left(5\Theta_2-4\right)x-\frac{20}{3}\,\Theta_1\left(2\Theta_2-1\right)\gamma_2$.\\
Hence, $\dis\polyseq[P_n^\two(x)]$ is  Hahn-classical.
\end{proof}

We have shown here that the type II multiple orthogonal polynomials characterised in Section \ref{Type II multiple orthogonal polynomials} generalise the cubic components of cases B1 and B2 of the Hahn-classical $3$-fold symmetric $2$-orthogonal polynomials in a similar way to how the type II multiple orthogonal polynomials on the step line with respect to the modified Bessel weights, defined by \eqref{Bessel weights definition}, generalise the cubic components of case A.
As proved in Theorem \ref{The cubic decomposition preserves the Hahn-classical property - theorem}, the cubic components of case C are again Hahn-classical. It remains an open question if there is an analogous generalisation for these components and, in case there is one, if that generalisation can be such the differentiation gives a shift on parameters. In this scenario, we also expect the confluence relations between case C and cases B1 and B2 to be preserved. We defer this investigation to a forthcoming work. 

\thanks{{\bf Acknowledgements. } We would like to thank Guillermo Lop\'ez-Lagomasino, Abey Lop\'ez-Garc\'ia for ins\-tructive conversations on Nikishin systems and Walter Van Assche for  enlightening discussions regarding several aspects of this research. }

\bibliographystyle{plain}
\bibliography{MOPS}
\end{document}